\newcommand{\R}{\mathbb{R}}
\newcommand{\idc}{\mathbf{1}}
\newcommand{\GG}{\mathcal{G}}
\newcommand{\N}{\mathcal{N}}
\newcommand{\E}{\mathcal{E}}
\newcommand{\F}{\mathcal{F}}
\newcommand{\p}{\mathbf{P}}
\newcommand{\q}{\mathbf{Q}}
\newcommand{\A}{\mathbf{A}}
\newcommand{\pp}{\mathbf{\tilde{P}}}
\newcommand{\rt}{\right}
\newcommand{\lt}{\left}
\newcommand{\mmu}{\bm{\mu}}
\newcommand{\hmmu}{\hat{\bm{\mu}}}
\newcommand{\hmu}{\hat{\mu}}
\newcommand{\ppi}{\bm{\pi}}
\newcommand{\nnu}{\bm{\nu}}
\newcommand{\xx}{\mathbf{x}}
\newcommand{\uu}{{\mathbf{u}}}
\newcommand{\pr}{\mathbb{P}}
\newcommand{\Ex}{\mathbb{E}}
\newcommand{\tp}{\tilde{P}}
\newcommand{\var}{\mathrm{Var}}
\newcommand{\cov}{\mathrm{Cov}}
\def\BibTeX{{\rm B\kern-.05em{\sc i\kern-.025em b}\kern-.08emT\kern-.1667em\lower.7ex\hbox{E}\kern-.125emX}}
\begin{document}

%
% The "title" command has an optional parameter, allowing the author to define a "short title" to be used in page headers.
\title{Non-Markovian Monte Carlo on Directed Graphs}

%
% The "author" command and its associated commands are used to define the authors and their affiliations.
% Of note is the shared affiliation of the first two authors, and the "authornote" and "authornotemark" commands

\author{Chul-Ho Lee}
\affiliation{%
  \department{Department of Computer Engineering and Sciences}
  \institution{Florida Institute of Technology}
  \city{Melbourne}
  \state{FL}
  \postcode{32901}}
\email{clee@fit.edu}

\author{Min Kang}
\affiliation{%
  \department{Department of Mathematics}
  \institution{North Carolina State University}
  \city{Raleigh}
  \state{NC}
  \postcode{27695}}
\email{mkang2@ncsu.edu}

\author{Do Young Eun}
\affiliation{%
  \department{Department of Electrical and Computer Engineering}
  \institution{North Carolina State University}
  \city{Raleigh}
  \state{NC}
  \postcode{27695}}
\email{dyeun@ncsu.edu}

%
% The abstract is a short summary of the work to be presented in the article.
\begin{abstract}
Markov Chain Monte Carlo (MCMC) has been the de facto technique for sampling and inference of large graphs such as online social networks. At the heart of MCMC lies the ability to construct an ergodic Markov chain that attains any given stationary distribution $\bm{\pi}$, often in the form of random walks or crawling agents on the graph. Most of the works around MCMC, however, presume that the graph is undirected or has reciprocal edges, and become inapplicable when the graph is directed and non-reciprocal. Here we develop a similar framework for directed graphs, which we call Non-Markovian Monte Carlo (NMMC), by establishing a mapping to convert $\bm{\pi}$ into the quasi-stationary distribution of a carefully constructed transient Markov chain on an extended state space. As applications, we demonstrate how to achieve any given distribution $\bm{\pi}$ on a directed graph and estimate the eigenvector centrality using a set of non-Markovian, history-dependent random walks on the same graph in a distributed manner. We also provide numerical results on various real-world directed graphs to confirm our theoretical findings, and present several practical enhancements to make our NMMC method ready for practical use in most directed graphs. To the best of our knowledge, the proposed NMMC framework for directed graphs is the first of its kind, unlocking all the limitations set by the standard MCMC methods for undirected graphs.
\end{abstract}

%
% The code below is generated by the tool at http://dl.acm.org/ccs.cfm.
% Please copy and paste the code instead of the example below.
%
\begin{CCSXML}
<ccs2012>
<concept>
<concept_id>10002950.10003648.10003671</concept_id>
<concept_desc>Mathematics of computing~Probabilistic algorithms</concept_desc>
<concept_significance>500</concept_significance>
</concept>
<concept>
<concept_id>10002950.10003648.10003700</concept_id>
<concept_desc>Mathematics of computing~Stochastic processes</concept_desc>
<concept_significance>500</concept_significance>
</concept>
</ccs2012>
\end{CCSXML}

\ccsdesc[500]{Mathematics of computing~Probabilistic algorithms}
\ccsdesc[500]{Mathematics of computing~Stochastic processes}

%
% Keywords. The author(s) should pick words that accurately describe the work being
% presented. Separate the keywords with commas.
\keywords{Directed graphs, sampling, eigenvector centrality, transient Markov chains, non-reversible Markov chains, quasi-stationary distribution, history-dependent random walks}

%
% This command processes the author and affiliation and title information and builds
% the first part of the formatted document.
\maketitle

\section{Introduction}\label{se:intro}

Markov Chain Monte Carlo (MCMC) is a general technique for sampling from an arbitrary probability distribution by constructing a Markov chain that achieves the target distribution as its stationary distribution~\cite{Liu04}. By definition, every MCMC method requires the so-called balance condition, i.e., $\pi(j) \!=\! \sum_i \pi(i) P_{ij}$ for all $j$, where $P_{ij}$ is the transition probability from state $i$ to $j$ and $\ppi \!=\! [\pi(1), \pi(2), \ldots, \pi(n)]$ is the stationary distribution. Most popular MCMC methods such as Metropolis-Hastings algorithm and Gibbs sampler, often referred to as Glauber dynamics, however, require a simple yet more restrictive condition, called \emph{detailed} balance condition, i.e., $\pi(i) P_{ij} \!=\! \pi(j) P_{ji}$ for all $i,j$~\cite{Hastings70,Peskun73,Liu04}. The detailed balance condition is generally preserved under a proposal and acceptance framework for state transitions. Assuming the current state $i$, a transition to state $j$ is proposed with probability $Q_{ij}$, which is the transition probability of an irreducible (proposal) Markov chain, and then accepted with acceptance probability $\gamma_{ij}$. This two-step process ensures the convergence to the desired stationary distribution $\ppi$ via achieving the detailed balance condition, i.e., $\pi(i) Q_{ij} \gamma_{ij} \!=\! \pi(j) Q_{ji} \gamma_{ji}$ for all $i,j$~\footnote{While the proposal and acceptance framework is mostly used for the Metropolis-Hastings algorithm, (random scan) Gibbs sampler also fits into the framework where the conditional distribution of the Gibbs sampler is considered as the proposal distribution along with the acceptance probability of one. In this sense, the Gibbs sampler is a special case of the Metropolis-Hastings algorithm. Furthermore, when it comes to binary spin systems, it is essentially the Barker's method~\cite{Barker65} that was compared with the Metropolis algorithm under the proposal and acceptance framework in~\cite{Hastings70,Peskun73}.}. The detailed balance condition imposes a serious restriction, named `reciprocity' condition, that $Q_{ij} \!>\! 0$ if and only if $Q_{ji} \!>\! 0$~\cite{Diaconis98,Liu04}.

The detailed balance condition is a local condition that eases the construction of a Markov chain in MCMC methods, but is also criticized as a source of slow convergence to equilibrium and high sampling variability because every transition should be balanced out with its reverse transition. Thus, some recent advances in MCMC methods have been made to remove the requirement of the detailed balance condition in order to accelerate the convergence speed or improve sampling efficiency. A popular class of strategies is to transform a given reversible Markov chain to its related nonreversible chain by using the idea of lifting~\cite{Chen99,Neal00,Fernandes11,Turitsyn11,Schram15,Sakai16,Vucelja16}, nonbacktracking~\cite{Neal04,Alon07,Diaconis13}, or vorticity~\cite{NIPS10,Chen13,Ichiki13,Bierkens16,Rey-Bellet16,Kaiser17}. The former two ideas are based on expanding the state space to give rise to persistence of motion or directionality in state transitions. The other idea is that for the transition matrix of a given reversible chain, which is symmetric with respect to $\ppi$, a proper antisymmetric matrix with $\ppi$ (or more matrices) is added to introduce a non-trivial probability flux flowing over a cycle in one direction. They all, however, require the reciprocity condition, which is carried over from the original reversible chain before the transformation.

On the other hand, MCMC also fits into the problem of sampling from a target probability distribution over a large, \emph{unknown} graph, or a huge state space whose exploration can only be done by traversing along the edges between nodes (states). The sampling problem with such limited access to the space often arises when sampling and estimating structural properties of various types of large networks, including peer-to-peer networks and online social networks, whose complete information is generally \emph{unavailable} in advance. MCMC has then been a key enabler to obtain a sequence of samples by constructing an ergodic Markov chain or `random walk' on a given graph, whose equilibrium distribution equals the target distribution, with only a limited and \emph{local} knowledge of the graph structure~\cite{WillingerToN09,RibeiroIMC10,GjokaJSAC11,LeeSIGMETRICS12,HardimanWWW13,BrunoSIGMETRICS16,History-vldb15,Rewiring-database16,Xin-hybrid-sampling,Xin-Eun-Infocom17,Lee-Eun-Infocom17}.

Most of the MCMC methods for constructing random walks on graphs, however, become \emph{inapplicable to directed} graphs, though many real-world networks can still only be represented by directed graphs~\cite{Newman03,Newman10-book,Kleinberg-book,Mendes17}. The famous examples are Web graphs, online social networks having follower-followee relationship, neuronal and metabolic networks, to name a few. The directed graphs naturally violate the reciprocity condition, i.e., there is at least one directed edge $(i,j)$ for which $Q_{ij} \!>\! 0$ but $Q_{ji} \!=\! 0$ due to the absence of its reverse directed edge $(j,i)$. The lack of reciprocity condition simply prohibits us from using the usual MCMC methods and even obtaining a closed-form expression of the stationary distribution of a given Markov chain on a directed graph. Thus, the stationary distribution has been merely estimated based on the visit counts obtained by running the Markov chain on a directed graph without knowing the explicit form of the distribution~\cite{VLDB2000,WWW2000,AAAI2001}. This is also the very reason why popular social-network-analysis metrics defined on a directed graph such as PageRank and eigenvector centrality are generally computed and estimated by the power method, assuming that a global knowledge of the whole graph structure is available a priori~\cite{Newman10-book}.

In this paper, we present a general algorithmic framework called Non-Markovian Monte Carlo (NMMC) that constructs a set of non-Markovian random walks whose historical empirical distribution converges to any desired probability distribution $\ppi$ over a directed, non-reciprocal graph, using only local information on the graph structure. The main idea is to establish a non-trivial mapping to convert the desired distribution into the so-called \emph{quasi-stationary distribution} (QSD) of a duly constructed \emph{transient} Markov chain on an extended state space. We then leverage history-dependent random walks or reinforced walks to estimate the QSD ``on the fly'', which does not require the entire topological information. This novel construction leads to a large degree of freedom and does not require the reciprocity condition that has been implicitly or explicitly assumed in almost all MCMC methods, and thus makes our approach the only viable solution to directed graphs. As a byproduct, our NMMC framework also allows us to estimate the eigenvector centrality of a given directed graph, one of the key metrics used in ranking or identifying influential nodes in epidemics or information spreading, by using the same set of history-dependent random-walk crawling agents. 

The rest of this paper is structured as follows. In Section~\ref{se:prelim}, we collect several terms and definitions on nonreversible Markov chains on directed graphs as well as on the eigenvector centrality (EVC) of a directed graph, to be used throughout the paper. In Section~\ref{se:qsd-mapping}, we provide necessary background on the QSD, its key properties and characterizations, and then formally establish a mapping to convert a given distribution $\ppi$ and the EVC of a graph into the QSD of a carefully constructed transient Markov chain. In Section~\ref{se:qsd-algo}, we first explain recent developments in the literature to estimate the QSD and then build our NMMC method with theoretical guarantees to achieve a given $\ppi$ and to estimate the EVC using random crawlers on directed graphs. We also discuss several ways to improve the speed of convergence and the practicability including dynamically adjusted algorithmic parameters based on run-time estimates on the fly. Section~\ref{se:simu} presents our extensive simulation results using a set of real-life directed graphs to numerically support our theoretical findings. In Section~\ref{se:discuss}, we thoroughly compare our NMMC method to the relevant literature, ranging from the traditional MCMC methods to the PageRank algorithm. We also present how to relax (potentially impractical) assumptions with our NMMC method, which require the in-degree information and the strong connectedness of the underlying directed graph, and demonstrate the feasibility of our NMMC method as a truly working solution to most directed graphs in practice. We provide related prior studies around sampling of directed graphs in Section~\ref{se:related} and conclude with a brief summary capturing our novel contributions in Section~\ref{se:conclu}.

\section{Preliminaries}\label{se:prelim}

\subsection{Nonreversible Markov Chains on Directed Graphs}

Let $\GG \!=\! (\N,\E)$ be a finite, aperiodic, strongly connected graph with a set of nodes $\N \!=\! \{1,2,\ldots,n\}$ and a set of \emph{directed} edges $\E$. The graph $\GG$ is defined by an $n \times n$ adjacency matrix $\A$, with elements $A_{ij} \!=\! 1$ if there is an edge from node $i$ to $j$ (directed), i.e., $(i,j) \in E$, and $A_{ij} \!=\! 0$ otherwise. Here $\A$ is asymmetric, namely, there is at least one directed edge $(i,j)$ for which $A_{ij} \!=\! 1$ but $A_{ji} \!=\! 0$ (non-reciprocal). Let $d^+_i \!=\! \sum_{j\in \N} A_{ij}$ be the out-degree of node $i$, i.e., the number of out-bound edges emanating from $i$. Similarly, let $d_i^- \!=\! \sum_{j\in \N} A_{ji}$ be the in-degree of node $i$ (the number of edges pointing to node $i$). We shall demonstrate how the `strong connectedness' of the underlying graph $\GG$ can be relaxed in Section~\ref{subse:nonscc}.

Consider an ergodic Markov chain on $\N$ with its transition matrix $\q \!=\! [Q_{ij}]$ such that $Q_{ij} \!>\! 0$ for every $(i,j) \!\in\! \E$, or $A_{ij} \!=\! 1$, which is irreducible and aperiodic on $\N$. The irreducibility of $\q$ is granted from the connectivity of the graph $\GG$, or the irreducibility of the adjacency matrix $\A$, in that for any $i,j \!\in\! \N$, there exists a path following directed edges from $i$ to $j$, i.e., there exists $k\!>\!0$ with $[\A^k]_{ij} \!>\! 0$. Similarly for the aperiodicity of $\q$. Then, since $\A$ is asymmetric, the ergodic Markov chain is nonreversible without satisfying the reciprocity condition. Its unique stationary distribution $\mmu \!=\! [\mu(1),\mu(2),\cdots,\mu(n)]$, which satisfies $\mu(j) \!=\! \sum_{i\in \N} \mu(i) Q_{ij}$ for all $j\!\in\!\N$, is also simply \emph{unknown} for a general graph $\GG$. For example, if we define a simple random walk on $\GG$, it is an ergodic, nonreversible Markov chain on $\N$ with its transition matrix
$\q^{\textsf{srw}} \!=\! [Q^{\textsf{srw}}_{ij}]$ given by $Q^{\textsf{srw}}_{ij} \!=\! A_{ij}/d_i^+$, where $d_i^+\!>\!0$ for all $i$. There is, however, no closed-form expression of the stationary distribution $\mmu$ for the simple random walk on a general directed graph.

\subsection{Eigenvector Centrality}
\label{subse:EVC}

In addition to sampling from a target probability distribution $\bm{\pi}$ on a directed graph, another metric of interest to estimate on the directed graph is the so-called eigenvector centrality (EVC). Specifically, the EVC of node $i \!\in\! \N$, say $x(i)$, represents the notion of importance (centrality) of node $i$ and is proportional to the sum of the EVC values of nodes pointing to $i$, i.e.,
\begin{equation}
\sum_{i\in\N} x(i) A_{ij} = \lambda x(j) ~\text{ for all}~ j \in \N \; \Longleftrightarrow \; \xx \A = \lambda \xx,
\label{eq-EVC}
\end{equation}
with some positive constant $\lambda \!>\! 0$, where $\xx$ is a row vector $\xx \!=\! [x(1),x(2),\cdots,x(n)]$. Since the adjacency matrix $\A$ is irreducible, the Perron-Frobenius theorem~\cite{Carl2000} asserts that there exists a unique solution $\xx$ to (\ref{eq-EVC}), which is the left leading eigenvector of $\A$ with all positive entries $x(i) \!>\! 0$, $i \!=\! 1,2,\ldots, n$, corresponding to the largest eigenvalue $\lambda \!=\! \lambda(\A) \!>\! 0$, i.e., the spectral radius of $\A$. Throughout this paper, we refer to this left leading eigenvector $\xx$ as the EVC of $\A$. Since $\beta\xx$ ($\beta \neq 0$) can also be a leading eigenvector for $\lambda(\A)$, we also assume that $\xx$ is normalized such that $\sum_{i\in\N}x(i)\!=\!1$.

The EVC finds many important applications. It has been playing a crucial role in ranking nodes in social network analysis and identifying influential nodes (users) in epidemics or information spreading~\cite{Canright06,Carreras2007,Newman10-book,Kleinberg-book,Lu-PR16}, and the reciprocal of its corresponding eigenvalue $1/\lambda(\A)$ is known to be the epidemic threshold of a general graph for SIR/SIS epidemic processes~\cite{Ganesh05,Draief-AAP08,Chakrabarti08,Mieghem-ToN09,Tong-TKDD16}. Again, we note that there is no closed-form expression of the EVC $\xx$. While the EVC has also been used as a main kernel for the maximal entropy random walk in the statistical physics~\cite{MERW09,MERW11}, its practical construction based only on local information still remains an open issue.

Even worse, unlike the stationary distribution $\mmu$ of an ergodic Markov chain, e.g., the simple random walk, satisfying $\mmu \!=\! \mmu \q$ with its (stochastic) transition matrix $\q$, the EVC $\xx$ in (\ref{eq-EVC}) cannot be made as a stationary distribution of an ergodic Markov chain (or a random walk) unless the adjacency matrix $\A$ can be made stochastic by some constant multiple, i.e., $d_i^+ \!=\!d$ for all $i$. Even if this is the case, the solution $\xx$ to (\ref{eq-EVC}) defies any closed-form expression since the in-degrees $d_i^-$ can be all different, reducing to the same situation with an ergodic Markov chain on a directed graph whose stationary distribution $\mmu$ is unknown. Thus, the only viable approach to compute the EVC $\xx$ is again to rely on the power method for which the entire adjacency matrix $\A$ needs to be known a priori. To the best of our knowledge, it remains open or unknown as to how (or whether or not it is possible) to build crawling algorithms using only local information towards estimating the EVC $\xx$.

\section{NMMC: Mapping to QSD}\label{se:qsd-mapping}

The main goal of our proposed NMMC framework is to achieve any arbitrary probability distribution $\ppi$ on a general directed graph $\GG$ and also to estimate the EVC of its adjacency matrix $\A$ using a set of random walks in a distributed manner without requiring the entire matrix $\A$ a priori. The main enabler of our NMMC framework to tackle this challenge is to convert any given ergodic, nonreversible Markov chain (e.g., simple random walk) on $\N$ with $\q \!=\! [Q_{ij}]_{i,j \in \N}$ and \emph{unknown} $\mmu$ into a related suitable \emph{transient} Markov chain on an augmented state space $\N^+ \!:=\! \N \cup \{0\}$, where the node set $\N$ is a set of transient (non-absorbing) states and `0' is an (artificially added) absorbing state. This mapping is made based solely on local information in a way that the resulting \emph{quasi-stationary distribution} (QSD) becomes the target distribution $\ppi$. Later in Section~\ref{se:qsd-algo} we present how to efficiently estimate the required QSD using a set of non-Markovian, history-dependent random walks, which is yet another integral part of our NMMC framework. In what follows, we first collect several definitions and basic equations we need for later use.

\subsection{A Primer on QSD}\label{subse:qsd-def}

Suppose that we have a discrete time \emph{transient} Markov chain
$\{X_t\}_{t\geq 0}$ on a finite state space $\N^+ \!:=\! \N \cup \{0\}$ with its transition matrix $\p \!=\! [P_{ij}]_{i,j \in N^+}$, where $\N \!=\! \{1,2,\cdots,n\}$ is the set of transient (non-absorbing) states and `0' is the absorbing state with $P_{00} \!=\! 1$, i.e., once the chain hits $0$, it remains there forever. We have $P_{i0} \!>\! 0$ for at least one $i \!\in\! \N$ to ensure that the chain is transient. Clearly, $\p$ is a stochastic matrix on $\N^+$. We assume that this Markov chain is irreducible and aperiodic on the set of \emph{non-absorbing} states in the sense that $\pr\{X_t \!=\! j~\mbox{for some}~ t \!\geq\! 1 ~| X_0 \!=\! i\} \!>\! 0$ and $\mbox{g.c.d.}\{t \!>\! 0 : P^{t}_{ii} \!>\! 0 \} \!=\! 1$ for all $i,j \!\in\! \N$, where $P^t_{ii} \!=\! \pr\{X_t \!=\! i ~| X_0 \!=\! i\}$ is a $t$-step transition probability.\footnote{Equivalently, this is often termed as $\N$ being an irreducible class~\cite{Doorn09,QSD-review-Doorn2013,Benaim15}.} Then, we have $\lim_{t\to\infty} X_t \!=\! 0$ almost surely (i.e., absorption is certain eventually). If we restrict the stochastic matrix $\p$ only on $\N$, the resulting $n\times n$ matrix $\pp \!=\! [\tilde{P}_{ij}]_{i,j\in\N}$ is naturally a \emph{sub-stochastic} matrix.

Consider the dynamics of the transient Markov chain $\{X_t\}$ at any given time $t$ by conditioning on the event that the chain is still surviving, i.e., the chain has not hit the absorbing site `0' by time $t$. These conditional probabilities $\pr\{X_t \!\in\! \cdot ~| X_t \!\in\! \N\} \!=\! \pr\{X_t \!\in\! \cdot ~| X_t \!\neq\! 0\}$ are well-defined time-evolving full probability measures fully supported on the set of non-absorbing states $\N$. Now, we can consider the stationary probability measure with respect to these conditional probability measures; in other words, the invariant distribution $\nnu = [\nu(1),\nu(2),\cdots,\nu(n)]$ under the Markovian dynamics {\it conditioned to survive}. This stationary distribution $\nnu$ is called the quasi-stationary distribution (QSD)~\cite{QSD-survey,Doorn09,QSD-review-Doorn2013,Groisman13,Benaim15,Min-Kang13,Blanchet2016}, satisfying
%-------------------------------------------
\begin{equation}\label{qsd-def}
\nu(j) = \pr\{X_{t} = j ~|~ X_0 \sim \nnu, X_t \neq 0\}
\end{equation}
%-------------------------------
for all non-absorbing states $j \!\in\! \N$ and for all time $t \!\geq\! 0$, where $X_0 \sim \nnu$ means that the initial state $X_0$ is drawn from the distribution $\nnu$.

Exactly in the same way as we approximate the stationary
distribution of an ergodic Markov chain from its long-term limiting behavior, we can take a similar line of approach to acquire the desired QSD. Specifically, by taking the limit on the conditional distribution at time $t$ in (\ref{qsd-def}) starting from an initial distribution $\mmu_0$, we obtain
\begin{equation}
\lim_{t\to\infty} \pr\{ X_t = j~|~ X_0 \sim \mmu_0, X_t \neq 0\} = \nu'(j)
\label{def-QLD}
\end{equation}
for all $j \!\in\! \N$. If there exists an initial distribution $\mmu_{0}$ such that $\nnu' := [\nu'(1),\nu'(2),\cdots,\nu'(n)]$ exists, we call this limiting distribution a quasi limiting distribution (QLD). It then follows that under our finite state space setting with irreducible and aperiodic assumptions, the QSD $\nnu$ and the QLD $\nnu'$ coincide, i.e., $\nnu \!=\! \nnu'$, and they are uniquely determined for any initial distribution $\mmu_0$~\cite{QSD-review-Doorn2013}.

We can rewrite (\ref{qsd-def}) with $t \!=\! 1$ as, for any $j \!\in\! \N$,
\begin{equation*}
\nu(j) = \frac{ \pr\{X_{1} = j ~|~ X_0 \sim \nnu\} }{ \pr\{X_{1} \neq 0 ~|~ X_0 \sim \nnu\} } = \frac{\sum_{i\in \N} \nu(i) P_{ij}}{ 1 - \sum_{i\in \N}\nu(i) P_{i0} },
\end{equation*}
which then reads as
\begin{equation}
\sum_{i\in\N} \nu(i) P_{ij} = \lambda \nu(j),
\label{eq:qsd}
\end{equation}
where
\begin{equation}
\lambda = \pr\{X_{1} \neq 0 ~|~ X_0 \sim \nnu\} = 1 - \sum_{i\in\N}\nu(i) P_{i0}.
\label{eq:lambda}
\end{equation}
In other words, since $P_{ij} \!=\! \tp_{ij}$ for $i,j \!\in\! \N$, we can rewrite (\ref{eq:qsd}) as $\nnu \pp \!=\! \lambda \nnu$, where $\pp$ is the sub-stochastic matrix, which is a restriction on $\N$ from the original transition matrix $\p$ on $\N^+$. Then, by the Perron-Frobenius theorem for irreducible, non-negative matrices (not necessarily stochastic)~\footnote{The Perron-Frobenius theorem asserts that the leading (left) eigenvector of any irreducible, non-negative matrix corresponding to its largest eigenvalue is the only one with every component being positive~\cite{Carl2000}.}, the QSD $\nnu$ must be the uniquely determined left principal eigenvector of $\pp$ with $\nu(j)\!>\!0$ for all $j \!\in\! \N$, corresponding to the maximal eigenvalue $\lambda \!>\! 0$. Note that $1 - \lambda = \sum_{i\in \N}\nu(i) P_{i0} > 0$ is the one-step absorption probability starting from the QSD $\nnu$.

%---------------------------------------------------------------------
\begin{figure*}[t!]
    \includegraphics[width=0.95\textwidth]{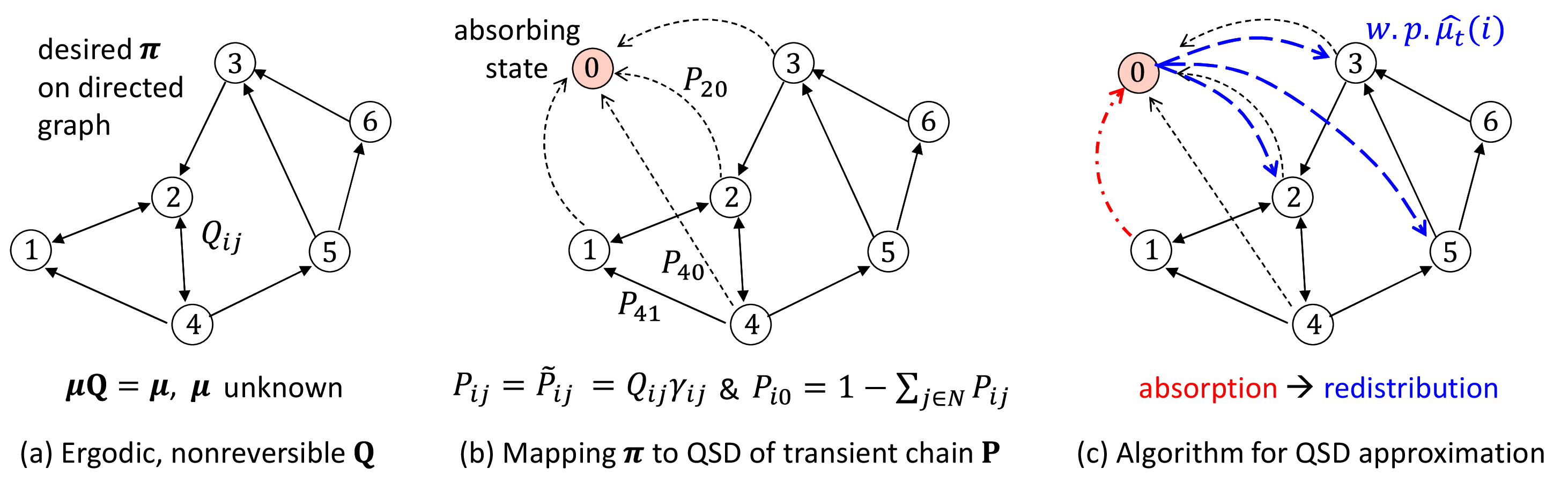}
    \vspace{-3mm}
    \caption{Summary of our Non-Markovian Monte Carlo method: With any ergodic, nonreversible Markov chain $\q$ defined over the node set $\N$ of a directed graph $\GG$, we construct a transient Markov chain $\p$ on $\N^+ \!=\! \N \cup \{0\}$ such that its QSD becomes our desired distribution $\ppi$ and then leverage non-Markovian, history-dependent random walks to estimate the desired QSD $\ppi$.\label{fig:qsd}}
    \vspace{-3mm}
\end{figure*}
%-----------------------------------------------------------------------

%==========================================
\subsection{Achieving \texorpdfstring{$\ppi$}~~ via QSD}
\label{subse:qsd-mapping}

We next show how to construct a transient Markov chain $\p \!=\! [P_{ij}]$ on the augmented state space $\N^+ \!=\! \N \!\cup\! \{0\}$ with a virtual absorbing site `0' added to the original state space $\N$, such that the resulting QSD $\nnu$ becomes any desired target probability distribution $\ppi \!=\! [\pi(1),\pi(2),\cdots,\pi(n)]$ with $\pi(i) \!>\! 0$ for $i \!\in\! \N$, i.e., $\nnu \!=\! \ppi$. This mapping, as we shall show shortly, is always possible as long as the underlying nonreversible Markov chain $\q \!=\! [Q_{ij}]$ on the original state space $\N$ (or the directed graph $\GG$) is ergodic. Note that letting $\mmu \!=\! [\mu(1),\mu(2),\cdots,\mu(n)]$ be the stationary distribution of the ergodic chain $\q$, satisfying $\mmu \!=\! \mmu \q$, we do not require any knowledge on $\mmu$, i.e., $\mmu$ is \emph{unknown} in our construction, as illustrated in Figure~\ref{fig:qsd}(a).

Adopting the proposal and acceptance framework as in usual MCMC methods, we first use the transition probability $Q_{ij}$ as a proposal distribution from the current state (node) $i$ to $j$ ($i,j \!\in\! \N$). This proposed move is then `accepted' with probability $\gamma_{ij}$, which will be determined shortly. Unlike usual MCMC methods, however, with probability $1 \!-\! \gamma_{ij}$, this move is `rejected', in which case the chain gets absorbed to `0' and stays there forever. We thus have $P_{ij} = Q_{ij} \gamma_{ij}$ for $i,j \!\in\! \N$, and $P_{i0} = \sum_{j\in \N} Q_{ij}(1 - \gamma_{ij})$ for each $i \!\in\! \N$, with $P_{00} \!=\! 1$. Recall that $\tp_{ij} \!=\! P_{ij}$ for $i,j \!\in\! \N$. See for Figure~\ref{fig:qsd}(b) for an illustration.

Despite the similar structure, our proposed method is fundamentally different from the MCMC methods, including the famous Metropolis-Hastings (MH) algorithm. The crucial difference is that our proposed chain $\q$ is \emph{nonreversible} on a directed graph \emph{without the reciprocity condition}, and goes to the absorbing site `0' if rejected. In contrast, in the MH algorithm the proposed chain is on a undirected graph (typically reversible), and if rejected, it simply stays at the current node and tries again in the next time step.

The next step is to choose the acceptance probability $\gamma_{ij} \in (0,1]$ such that
\begin{equation}
\sum_{i\in \N} \pi(i) \tp_{ij} = \sum_{i\in \N} \pi(i) Q_{ij}\gamma_{ij} = \kappa\pi(j)
\label{QSD:eq3}
\end{equation}
is satisfied for all $j \!\in\! \N$, i.e., $\ppi \pp \!=\! \kappa \ppi$ for some $\kappa \!>\! 0$. Then, by the Perron-Frobenius theorem, the distribution $\ppi$ is the \emph{unique} left leading eigenvector of $\pp$ and thus coincides with the unique QSD $\nnu$ of the transient chain $\p$, corresponding to the largest eigenvalue  $\lambda \!=\! \kappa$. The key aspects here are (i) how to choose $\gamma_{ij}$ using only locally available information and (ii) how to efficiently estimate or approximate the resulting QSD, or to obtain samples from the QSD in a distributed manner. In what follows, we present the specific form of $\gamma_{ij}$ that maps any desired $\ppi$ on a given directed graph to the QSD $\nnu$ of the suitably constructed transient chain $\p$. The case of (ii) using crawling walks on a directed graph will be discussed later in Section~\ref{se:qsd-algo}.

For a given (proposed) ergodic, nonreversible chain $\q$, we define $S_j \!:=\! \{k \!\in\! \N ~|~ Q_{kj} \!>\! 0\}$ for each $j \!\in\! \N$. This is the set of states (nodes) from which the proposed chain $\q$ can enter into state $j$ in one step with positive probability. For each $S_j$, consider a probability distribution $\{\alpha_{kj}\}_{k \in S_j}$ such that $\alpha_{kj} \!>\! 0$ for all $k \!\in\! S_j$ and $\sum_{k \in S_j}\alpha_{kj} \!=\! 1$. Note that the set $S_j$ is the support of the probability distribution $\{\alpha_{\cdot j}\}$. We then have the following.

%----------------
\begin{theorem}
\label{thm1:general-qsd}
For any choice of $\{\alpha_{kj}\}_{k \in S_j}$ on each $S_j$, $j \!\in\! \N$, if we set
\begin{equation}
\gamma_{ij} \propto \frac{\pi(j)}{\pi(i)} \cdot \frac{\alpha_{ij}}{Q_{ij}},
\label{r-qij-general}
\end{equation}
such that $\gamma_{ij}\leq 1$ for any pair $(i,j)$ with $Q_{ij}>0$, then the target distribution $\ppi$ becomes the QSD $\nnu$ of the transient chain $\p$, where $P_{ij} \!=\! Q_{ij} \gamma_{ij}$.
\end{theorem}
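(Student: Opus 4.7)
The plan is to verify the eigenvalue equation (\ref{QSD:eq3}) directly, and then invoke the Perron--Frobenius characterization of the QSD already established in Section~\ref{subse:qsd-def}. Recall from that section that the QSD of the transient chain $\p$ is the unique left eigenvector of the sub-stochastic matrix $\pp$, with strictly positive entries, corresponding to its largest (Perron) eigenvalue. So it will suffice to (i)~produce a positive constant $\kappa>0$ such that $\ppi\pp=\kappa\ppi$, and then (ii)~identify $\kappa$ as the spectral radius via Perron--Frobenius.

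For step (i), write $\gamma_{ij}= c\cdot\frac{\pi(j)}{\pi(i)}\cdot\frac{\alpha_{ij}}{Q_{ij}}$ for the common proportionality constant $c>0$, where $c$ is chosen small enough (e.g.\ $c=\min_{(i,j):Q_{ij}>0}\pi(i)Q_{ij}/(\pi(j)\alpha_{ij})$) to force $\gamma_{ij}\le 1$ on every pair with $Q_{ij}>0$. Plugging this into the left-hand side of (\ref{QSD:eq3}) and restricting the sum to $i\in S_j$ (since $Q_{ij}=0$ for $i\notin S_j$), the factors $\pi(i)$ and $Q_{ij}$ cancel exactly, leaving
\begin{equation*}
\sum_{i\in\N}\pi(i)Q_{ij}\gamma_{ij}=c\,\pi(j)\sum_{i\in S_j}\alpha_{ij}=c\,\pi(j),
\end{equation*}
where the last equality uses $\sum_{k\in S_j}\alpha_{kj}=1$. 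Thus $\ppi\pp=c\,\ppi$, i.e.\ $\ppi$ is a left eigenvector of $\pp$ with eigenvalue $\kappa=c$.

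For step (ii), I would verify that $\pp$ inherits irreducibility (and aperiodicity) from $\q$: since $\gamma_{ij}>0$ whenever $Q_{ij}>0$ (because $\pi(i),\pi(j),\alpha_{ij},Q_{ij}$ are all strictly positive on $S_j$), the support of $\pp$ on $\N\times\N$ agrees with that of $\q$, so the directed-edge connectivity is preserved. Applying the Perron--Frobenius theorem for irreducible non-negative matrices to $\pp$, its Perron eigenvalue admits a unique (up to scaling) strictly positive left eigenvector. Since $\ppi$ is strictly positive and we have just shown $\ppi\pp=c\,\ppi$, this $\ppi$ must coincide with the Perron eigenvector of $\pp$ and $c$ must be the spectral radius. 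By the QSD characterization of Section~\ref{subse:qsd-def}, this identifies $\ppi$ with the QSD $\nnu$ of the transient chain $\p$.

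The main obstacle, modest as it is, lies less in the algebra than in keeping the structural hypotheses straight: one has to make sure that the global constant $c$ can be chosen uniformly over all $(i,j)$ with $Q_{ij}>0$ (rather than a pair-dependent constant, which would break the cancellation), that $\pp$ remains irreducible after the per-edge thinning by $\gamma_{ij}$, and that the chain still has at least one absorbing transition so that $\pp$ is genuinely sub-stochastic (i.e.\ $c<1$), which follows because for the pair achieving the minimum one typically still has other $(i,j')$ with $\gamma_{ij'}<1$, producing $P_{i0}>0$ at some $i$. Once these points are checked, the eigenvector identification and the Perron--Frobenius step are immediate.
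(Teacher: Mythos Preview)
Your proposal is correct and follows essentially the same route as the paper: write $\gamma_{ij}$ with a single global proportionality constant, plug into $\sum_{i}\pi(i)Q_{ij}\gamma_{ij}$, use the cancellation and $\sum_{i\in S_j}\alpha_{ij}=1$ to obtain $\ppi\pp=\kappa\ppi$, and then invoke Perron--Frobenius together with the QSD characterization from Section~\ref{subse:qsd-def} to identify $\ppi$ with $\nnu$. The only cosmetic difference is that the paper writes the constant as $1/c$ with $c>1$ (so the eigenvalue is $1/c\in(0,1)$), whereas you write it as $c$ directly; your added remarks on irreducibility of $\pp$ and on ensuring genuine sub-stochasticity are points the paper leaves implicit.
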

%-------------------------
\begin{proof}
Let $\ppi$ be the given target distribution on $\N$. Fix $j \!\in\! \N$. Choose any arbitrary probability distribution $\{\alpha_{kj}\}_{k \in S_j}$ on the support $S_j$. By definition of $S_j$, we observe that for any $i \not\in S_j$, i.e., $Q_{ij} = 0$, the choice of $\gamma_{ij}$ is irrelevant since the proposed move from $i$ to $j$ will never take place. We can rewrite $\gamma_{ij}$ in (\ref{r-qij-general}) as $\gamma_{ij} = \frac{1}{c}\cdot\frac{ \pi(j)\alpha_{ij} }{ \pi(i)Q_{ij} }$ with some suitable
scaling constant $c > 1$ such that $\gamma_{ij} \leq 1$. Now, observe that
\begin{equation}
  \sum_{i \in \N} \pi(i) P_{ij} = \sum_{i \in S_j} \pi(i) Q_{ij}\gamma_{ij} = \sum_{i \in S_j} \frac{1}{c} \pi(j) \alpha_{ij} = \frac{1}{c} \pi(j), \label{tttt}
\end{equation}
where the last equality is from $\sum_{k \in S_j}\alpha_{kj} \!=\! 1$. Since this holds for all $j \!\in\! \N$ and $\tp_{ij} \!=\! P_{ij}$ for $i,j \!\in\! \N$, we have $\ppi \pp \!=\! (1/c) \ppi$ with $1/c \!>\! 0$. In view of the Perron-Frobenius theorem, $\ppi$ is the only positive eigenvector of $\pp$ and is the leading left eigenvector, corresponding to the maximal eigenvalue $1/c$.

On the other hand, as explained in Section~\ref{subse:qsd-def}, from the Perron-Frobenius theorem we have the existence of a unique QSD $\nnu$ of the transient chain $\p$ that verifies $\nnu \pp \!=\! \lambda \nnu$, with $\lambda$ in (\ref{eq:lambda}). Therefore, the unique QSD $\nnu$ must coincide with our desired probability distribution $\ppi$, with $\lambda \!=\! 1/c \in (0,1)$.
\end{proof}

As a special case, we can set $\alpha_{kj} \!=\! 1/|S_j| \!=\! [\sum_{k\in \N} \idc_{\{Q_{kj}>0\}}]^{-1}$ for all $k \!\in\! S_j$, and then have the following expression for $\gamma_{ij}$.
%-------------------------
\begin{corollary}
\label{cor:special-qsd}
Consider
\begin{equation}
\gamma_{ij} \propto \frac{\pi(j)}{\pi(i)} \cdot \lt[Q_{ij}|S_j| \rt]^{-1} =: b_{ij},
\label{r-qij}
\end{equation}
such that $\gamma_{ij}\leq 1$ for each $(i,j)$ with $Q_{ij}\!>\!0$. Then, the QSD $\nnu$ of $\p$ becomes identical to the target distribution $\ppi$.
\end{corollary}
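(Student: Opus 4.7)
The plan is simply to specialize Theorem~\ref{thm1:general-qsd} to the particular choice of the auxiliary distributions $\{\alpha_{kj}\}_{k \in S_j}$ given by the uniform distribution on each support $S_j$.

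First, I would verify that the uniform assignment $\alpha_{kj} = 1/|S_j|$ for all $k \in S_j$ is a legitimate instance of the family of distributions allowed by Theorem~\ref{thm1:general-qsd}. Since $\q$ is ergodic on $\N$ and $\GG$ is strongly connected, for every $j \in \N$ there exists at least one $k \in \N$ with $Q_{kj} > 0$, so $|S_j| \geq 1$ and the uniform weight $1/|S_j|$ is well-defined, strictly positive on $S_j$, and sums to one. Hence $\{\alpha_{kj}\}_{k \in S_j}$ is a valid probability distribution supported on $S_j$.

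Next, I would plug this choice into the general formula~(\ref{r-qij-general}). Substituting $\alpha_{ij} = 1/|S_j|$ for $i \in S_j$ gives
\begin{equation*}
\gamma_{ij} \propto \frac{\pi(j)}{\pi(i)} \cdot \frac{1/|S_j|}{Q_{ij}} = \frac{\pi(j)}{\pi(i)} \cdot \bigl[Q_{ij}|S_j|\bigr]^{-1} = b_{ij},
\end{equation*}
which is precisely the expression in~(\ref{r-qij}). By hypothesis the proportionality constant $1/c$ is chosen so that $\gamma_{ij} \leq 1$ whenever $Q_{ij} > 0$; for $i \notin S_j$ (i.e., $Q_{ij} = 0$) the value of $\gamma_{ij}$ is irrelevant, exactly as noted in the proof of Theorem~\ref{thm1:general-qsd}.

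Finally, I would invoke Theorem~\ref{thm1:general-qsd} directly: since~(\ref{r-qij}) is a valid instance of~(\ref{r-qij-general}) for the uniform choice of $\{\alpha_{kj}\}$, the theorem guarantees that the target distribution $\ppi$ is the QSD $\nnu$ of the resulting transient chain $\p$ with $P_{ij} = Q_{ij}\gamma_{ij}$. There is no real obstacle here; the only care needed is to observe that the uniform distribution on $S_j$ is admissible and that the normalization constant $c$ from Theorem~\ref{thm1:general-qsd} plays the same role as the proportionality in~(\ref{r-qij}).
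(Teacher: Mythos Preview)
Your proposal is correct and matches the paper's own approach exactly: the corollary is stated immediately after noting that one can set $\alpha_{kj} = 1/|S_j|$ as a special case of Theorem~\ref{thm1:general-qsd}, and no separate proof is given in the paper beyond this observation.
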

%---------------------------------

While the probability distribution $\{\alpha_{\cdot j}\}$ on the support $S_j$ can be made arbitrary as shown in Theorem~\ref{thm1:general-qsd}, we consider $\gamma_{ij}$ in (\ref{r-qij}) throughout the rest of the paper for its simplicity. Note that the acceptance probability $\gamma_{ij}$ in (\ref{r-qij}) requires $\pi(j)/\pi(i)$, which is essentially the desired target distribution up to multiplicative constants, the proposal probability $Q_{ij}$ (clearly known), and $|S_j|$ (the number of possible transitions into node $j$). These are all local information around $i,j$ when the chain attempts to make a move from $i$ to $j$.

As a special case, we consider $\q \!=\! \q^{\textsf{srw}}$, the transition matrix of the simple random walk on a given directed graph $\GG$. In this case, $Q_{ij} \!=\! A_{ij}/d_i^+$ and $|S_j| \!=\! d_j^-$, the in-degree of node $j$. This is the only required local information, not exact knowledge (identities) of the nodes whose outgoing edges point to each node. Such in-degree can be obtained without much difficulty in online social networks having follower-followee relationship such as Google++, Twitter, and Instagram, since the in-degree is usually available as part of a user profile, which is simply the number of followers~\cite{Vesdapunt-CIKM16}. It is worth noting that the social networks are often much more stringent on retrieving the IDs of inlinks and outlinks of each user than looking up user profiles~\cite{Vesdapunt-CIKM16}. We shall even demonstrate in Section~\ref{subse:online} that an online estimation of the in-degree of each node would suffice for our proposed NMMC method, even if such in-degree information may not be available.

Thanks to Theorem~\ref{thm1:general-qsd} and Corollary~\ref{cor:special-qsd}, without knowing the stationary distribution $\mmu$ of the simple random walk, which is \emph{unknown} in general, we can achieve any desired distribution $\ppi$ by estimating the QSD of the transient chain $\p \!=\! [P_{ij}]$ on $\N^+$ with $P_{ij} \!=\! \gamma_{ij}Q_{ij} \!=\! \gamma_{ij}A_{ij}/d_i^+$, where
\begin{equation}
\gamma_{ij} \propto b_{ij} = \frac{\pi(j)}{\pi(i)} \cdot \frac{d_i^+}{d_j^-},
\label{eq:gamma-SRW}
\end{equation}
with some scaling constant $c$ such that $b_{ij} / c  \leq 1$ for all $(i,j)\!\in\!\E$. Then, if we want to achieve the uniform distribution $\ppi$ on $\GG$, i.e., $\pi(i) \!=\! 1/n$ for all $i \!\in\! \N$, then $c \!\geq\! \max_{(i,j)\in \E} d_i^+ / d_j^-$. Since $d_j^- \!\geq\! 1$ for any $j\in\N$ (since the graph is irreducible), one easy choice would be $c \!=\! \max_{i\in \N} d_i^+$, i.e., the maximum out-degree of the graph. When such a global constant is known in advance, our Theorem~\ref{thm1:general-qsd}, or Corollary~\ref{cor:special-qsd}, applies as is. In case that such constant is not available, later in Section~\ref{subse:our-algo}, we will demonstrate that a real-time (dynamic) estimation of the global constant can also be safely used.

\subsection{Mapping EVC to QSD}
\label{subse:EVC-mapping}

In addition to achieving any desired distribution $\ppi$ as a QSD, we demonstrate that we can go one step further to estimate the EVC $\xx$ of the adjacency matrix $\A$ of a given directed graph $\GG$ in a similar fashion. As explained in Section~\ref{subse:EVC}, by definition, the EVC $\xx$ is the (normalized) left leading eigenvector of the adjacency matrix $\A$ with the corresponding maximal eigenvalue $\lambda(A)$.
Then, by simply adjusting $\gamma_{ij}$ using local information, we can equate the resulting QSD with the EVC $\xx$ of $\A$, as shown next.

%-------------------------
\begin{corollary}
\label{cor:mapping-EVC}
If we set $\gamma_{ij} \!=\! A_{ij}/(cQ_{ij})$ for any $(i,j)$ pair with $Q_{ij}\!>\!0$, where $c$ is some suitable scaling constant such that $\gamma_{ij} \!\leq\! 1$, then the QSD $\nnu$ of the transient $\p$ with elements $P_{ij} \!=\! Q_{ij}\gamma_{ij} \!=\! A_{ij}/c$ for $i,j \!\in\! \N$, becomes the EVC $\xx$ of the adjacency matrix $\A$.
\end{corollary}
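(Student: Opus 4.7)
The plan is to exploit the fact that the choice $\gamma_{ij} = A_{ij}/(cQ_{ij})$ makes the sub-stochastic matrix $\pp$ collapse into a scalar multiple of the adjacency matrix itself, so that an eigenvector identity for $\A$ transfers directly into an eigenvector identity for $\pp$. I can then invoke Perron--Frobenius exactly as in the proof of Theorem~\ref{thm1:general-qsd} to conclude that the QSD of $\p$ must coincide with $\xx$.

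Concretely, I would first compute $P_{ij}$ for $i,j\in\N$ and observe that $P_{ij} = Q_{ij}\gamma_{ij} = A_{ij}/c$ in all cases (both when $Q_{ij}>0$, where the assignment is direct, and when $Q_{ij}=0$, where necessarily $A_{ij}=0$ as well because $Q_{ij}>0$ for every edge of $\GG$). Hence $\pp = \A/c$. By the defining property (\ref{eq-EVC}) of the EVC, $\xx\A = \lambda(\A)\xx$, and dividing by $c$ yields
\begin{equation*}
\xx \pp = \frac{\lambda(\A)}{c}\,\xx,
\end{equation*}
so $\xx$ is a positive left eigenvector of $\pp$. Since $\A$ is irreducible, so is $\pp$, and by the Perron--Frobenius theorem the only left eigenvector with strictly positive entries is the one corresponding to the spectral radius; hence $\xx$ is the unique left leading eigenvector of $\pp$ and the associated eigenvalue is $\lambda(\A)/c$. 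As explained in Section~\ref{subse:qsd-def}, the QSD $\nnu$ of $\p$ is likewise characterized as the unique left leading eigenvector of $\pp$, so $\nnu = \xx$.

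The step that needs the most care is verifying that the construction genuinely fits the QSD framework from Section~\ref{subse:qsd-def}, rather than degenerating. Specifically, I need $c$ to be large enough that $\gamma_{ij}\le 1$ for every edge (which is exactly the hypothesis on the scaling constant), and I need the absorbing state to be reachable, i.e.\ $P_{i0} = 1 - \sum_{j\in\N}A_{ij}/c > 0$ for at least one transient $i$. The latter follows whenever $c > \min_{i}d_i^+$, which can always be arranged alongside $\gamma_{ij}\le 1$; this is the only subtlety, since if $c$ were equal to a common out-degree the chain would be recurrent and the QSD interpretation would require the more general Perron--Frobenius eigenvector interpretation rather than the conditioned-to-survive one. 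Aside from this bookkeeping, the argument is a direct specialization of Theorem~\ref{thm1:general-qsd}'s mechanism, with $\ppi$ replaced by $\xx$ and $\kappa$ replaced by $\lambda(\A)/c$.
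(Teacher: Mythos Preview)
Your proposal is correct and follows essentially the same approach as the paper: both arguments reduce to the identity $\pp = \A/c$ and then invoke Perron--Frobenius to identify the unique positive left eigenvector. The only cosmetic difference is direction---the paper starts from the QSD $\nnu$ satisfying $\nnu\pp=\lambda\nnu$ and multiplies by $c$ to recognize $\nnu$ as an eigenvector of $\A$, whereas you start from the EVC $\xx$ and divide by $c$ to recognize it as an eigenvector of $\pp$; these are trivially equivalent. Your additional bookkeeping about the case $Q_{ij}=0$ and about ensuring $P_{i0}>0$ for some $i$ is more careful than the paper's own proof, which leaves those points implicit.
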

%---------------------------------
%-----------------------------------
\begin{proof}
Using $\gamma_{ij} \!=\! A_{ij}/(cQ_{ij})$, and from (\ref{eq:qsd}) with $P_{ij}$ replaced by $\tp_{ij}$, we observe that
\begin{equation}
\sum_{i\in \N} \nu(i) A_{ij} = (\lambda c) \cdot \nu(j) \; \Longleftrightarrow \; \nnu \A = (\lambda c) \nnu,
\label{qsd-evc}
\end{equation}
with $\lambda$ given in (\ref{eq:lambda}), i.e., the resulting QSD $\nnu$ is the left eigenvector of the adjacency matrix $\A$ with $\nu(i) \!>\! 0$, corresponding to the positive eigenvalue $\lambda c$. The assertion then follows from the Perron-Frobenius theorem.
\end{proof}

Corollary~\ref{cor:mapping-EVC} says that the well known EVC $\xx$ of the adjacency matrix $A$ of a given directed graph $\GG$ can now be mapped to the unique QSD $\nnu$ of a transient Markov chain $\p$ driven by a general nonreversible chain $\q$ using only local information for $\gamma_{ij}$, and that the corresponding largest eigenvalue $\lambda(\A) \!=\! \lambda c$ becomes the inverse of the epidemic threshold of the given graph  (See Section~\ref{subse:EVC}.) For example, when the simple random walk is used, i.e., $Q_{ij} \!=\! Q^{\textsf{srw}}_{ij} \!=\! A_{ij}/d_i^+$, we see that $\gamma_{ij} \!=\! d_i^+/c$, where $c \!=\! \max_{i\in\N} d_i^+$ can be set as the maximum out-degree of the graph.

So far, we have shown how to map any desired distribution $\pi$ and the EVC $\xx$ on a directed graph onto the QSD of some transient Markov chain $\p$, which is obtained from any ergodic, nonreversible chain $\q$, with the simple random walk as an example. Now, the question is how to efficiently simulate or estimate the given QSD, or to obtain samples from the given QSD, via the same underlying nonreversible chain $\q$ used for graph exploration. In the next section, we present several methods to simulate or estimate the QSD and complete our NMMC method using a set of non-Markovian random walks (or crawling agents) to accomplish the goal.

\section{NMMC: QSD Estimation}\label{se:qsd-algo}

\subsection{Non-Markovian Random Walks to Approximate QSD}
\label{subse:qsd-simu-standard}

We are often interested in calculating the expectation of a function $f: \N \!\to\! \R$ defined on the state space $\N$ with respect to a desired distribution $\ppi$, i.e., $\Ex_{\ppi}\{f\}\!=\!\sum_{i\in\N} f(i) \pi(i)$, by constructing an ergodic Markov chain $\{X_t\}$ that has the desired $\ppi$ as its equilibrium distribution. This is one of the main applications of the MCMC methods. By the ergodic theorem, we have
\begin{equation}
\lim_{t\to\infty} \frac{1}{t}\sum_{k=1}^{t} f(X_k) = \Ex_{\ppi}\{f\}  \quad \mbox{a.s.}, \label{ergodic}
\end{equation}
for any initial distribution for $X_0$. Thus, one can directly utilize this defining relationship by simulating the ergodic chain $\{X_t\}$ for a long time and taking the time average in order to estimate or approximate $\Ex_{\ppi}\{f\}$.

In case of the QSD $\nnu$ of a transient Markov chain $\{X_t\}$ on $\N^+ \!=\! \N\cup\{0\}$, we would also consider the long-time averages to estimate $\Ex_{\ppi}\{f\}$ driven by the desired QSD $\nnu \!=\! \ppi$ (or the QSD $\ppi$ itself) in view of
\begin{equation}
  \lim_{t\to\infty} \Ex\lt\{f(X_t) ~|~ X_0 \sim \mmu_0, X_t \neq 0 \rt\} = \Ex_{\ppi}\{f\}, \label{QLD2}
\end{equation}
which is from (\ref{def-QLD}) and the identity between QSD and QLD in the limit for any initial distribution $\mmu_0$. Unfortunately, however, this does not help much. In order to rely on the regularity inherent in the limiting relationship in (\ref{QLD2}), we would have to generate many trajectories for $\{X_t\}$ for a long time, only to find out that most of them get absorbed or trapped to the absorbing site `0' since $\pr\{X_t \!\neq\! 0\}$ goes to zero exponentially fast, and we are left with very few trajectories that survive for a long time and possibly contribute to useful samples for the estimation. In short, this brute-force approach simply would not work for numerical estimation of the QSD in general.

In the mathematics literature, one practical approach in approximating the QSD is based on a historical urn-based process or a reinforced random walk on the state space. Although such a process loses the Markovian property due to the dependency on its own history or trajectory for the next move, using the past trajectory of the process is amenable to practical implementation. First, observe that from (\ref{eq:qsd}) and (\ref{eq:lambda}), the QSD $\nnu$ of the transient chain $\p$ satisfies
\begin{equation}
\nu(j) = \sum_{i\in \N} \nu(i) \lt[P_{ij} + P_{i0} \nu(j)\rt], \quad j \in \N.
\label{eq:qsd-redist}
\end{equation}
This can be interpreted as an ergodic Markov chain on the state space $\N$ with its transition matrix $\p^{\nnu} \!=\! [P^{\nnu}_{ij}]$ given by
\begin{equation}\label{eq:redist}
  P^{\nnu}_{ij} := P_{ij} + P_{i0}\nu(j) = \tilde{P}_{ij} + P_{i0}\nu(j), \quad i,j \in \N.
\end{equation}
That is to say, assuming that $\nnu$ is known, the process evolves according to $P_{ij} \!=\! \tilde{P}_{ij}$ until it gets absorbed, upon which it instantaneously `redistributes' to a new node in $\N$ according to $\nnu$. Let $\Phi(\nnu)$ be the invariant (stationary) distribution of this ergodic Markov chain with its transition matrix $\p^{\nnu} \!=\! [P^{\nnu}_{ij}]$, indexed with $\nnu$. Then, we can see that the QSD $\nnu$ in (\ref{eq:qsd-redist}) becomes precisely the fixed point of the mapping from $\nnu$ to $\Phi(\nnu)$. In other words, $\nnu$ is the QSD for $\p$ if and only if $\Phi(\nnu) \!=\! \nnu$. The construction of such an ergodic chain is, however, practically infeasible in estimating the target QSD $\nnu \!=\! \ppi$, since lack of the entire topological information prohibits direct access into the state space $\N$ according to $\ppi$, or $\ppi$ is simply unknown, e.g., for EVC of $\A$.

The above observation nonetheless motivated one of the earliest efforts in approximating the QSD of a given transient chain by Aldous \textit{et al.}~\cite{Aldous88} to replace $\nnu$ (unknown) in~(\ref{eq:qsd-redist}) with the (known) empirical distribution of the history of the original process.\footnote{In the literature, there are some study on Fleming-Viot (FV) processes and the evolution of the associated spatial empirical measures converging to the QSD~\cite{Min-Kang04,Asselah2011-JAP,GK-ip,QSD-survey,Min-Kang13,Groisman13}. It is essentially to replace the unknown $\nnu$ in~(\ref{eq:qsd-redist}) by the `spatial' occupancy measure of a large number of particles undergoing identical dynamics with mutual interaction upon particle absorption. To be more precise, in our graph setting, it can be described as follows. There are $K$ particles or agents, each of which moves according to $\p$ of the transient Markov chain on $\N^+$. When any of the particles gets absorbed by `0', it is instantaneously relocated to one of the remaining particles, chosen uniformly at random, on $\N$. Then, the empirical measure of the positions of the particles converges to the QSD as \emph{both} the number of particles $K$ and time $t$ go to infinity. The number of particles required under the FV method, however, should be much larger than the size of the graph (i.e., $K \!\gg\! n$), making the FV method impractical to our scenario. Thus, we do not consider the FV method in this paper.} To be precise, we define its historical empirical distribution $\hmmu_t$ by time $t$ of the underlying stochastic process $\{Z_t\}_{t\geq 0}$ as
%-------------------------------
\begin{equation} \label{hist-emp}
\hmmu_{t} = \frac{1}{t\!+\!1}  \sum_{k=0}^{t}  \delta_{Z_{k}},
\end{equation}
%-----------------------------
with $\hmmu_{t} \!=\! [\hmu_t(1),\cdots,\hmu_t(n)]$, where $\hmu_t(i)$ is simply given by
\begin{equation*}
  \hmu_t(i) = \frac{1}{t \!+\! 1}  \sum_{k=0}^{t}  \delta_{Z_{k}}(i) = \frac{1}{t\!+\!1} \sum_{k=0}^{t}\idc_{\{Z_{k}=i\}},
\end{equation*}
which records the relative visit frequency of $\{Z_k\}$ to the state $i \!\in\! \N$ over the first $t\!+\!1$ time steps. Note that $\hmu_t(i)$ is random while $\sum_{i\in\N}\hmu_t(i) \!=\! 1$ for each $t$, i.e., $\hmmu_{t}$ is in fact a probability distribution (vector) on $\N$ for each $t$.

Consider the natural filtration $\{\F_{t}\}_{t \geq 0}$
associated with the historical process $\{Z_{t}\}$, i.e., $\F_t := \sigma\{Z_{k}\,|\, k \leq t \}$. We then have
\begin{equation*}\label{def-hist-proc}
\pr\{Z_{t+1} = j\,|\, \F_{t}\} = \sum_{i \in \N}
P_{ij}^{\hmmu_t} \cdot \idc_{\{Z_{t}=i\}}
\end{equation*}
for any given $j \!\in\! \N$, where $P_{ij}^{\hmmu_t}$ is from (\ref{eq:redist}) with $\nnu$ replaced by $\hmmu_t$. That is to say, given the full history $\F_t$ of the process $\{Z_t\}$ up to time $t$, the process will choose its next state $Z_{t+1}$ by the original
transition matrix $\p$ if not getting absorbed. When it hits the absorbing state `0', it will instantaneously jump to another state in $\N$ with probability determined
by the historical empirical distribution $\hmmu_t \!=\! \frac{1}{t+1} \sum_{k = 0}^{t} \delta_{Z_{k}}$, observed by the current time instant $t$ and thus readily available to us.

This method has then been recently generalized in~\cite{Benaim15,Benaim16} by utilizing the stochastic approximation theory and its induced differential equations developed for reinforced random walks~\cite{Benaim97,Pemantle07}, allowing different forms of the historical empirical distribution $\hmmu_t$ that substitutes $\nnu$ in (\ref{eq:redist}). Specifically, the distribution $\hmmu_t$ is generalized as
%-------------------------------
\begin{equation} \label{hist-emp2}
\hmmu_{t} = \frac{\sum_{k=0}^{t} w_k\delta_{Z_{k}}}{\sum_{k=0}^{t}  w_k} = \sum_{k=0}^{t} \eta_t(k) \delta_{Z_{k}},
\end{equation}
%-----------------------------
with
%-------------------------------
\begin{equation} \label{time-weight}
\eta_t(k) := \frac{w_k}{\sum_{k'=0}^{t}w_{k'}}, \quad k = 0, 1, 2, \ldots, t.
\end{equation}
%-----------------------------
Here, $\hmu_t(i)$ indicates the visit frequency to node $i$ by $\{Z_k\}$ up to time $t$, each of which is \emph{weighted} by $\eta_t(k)$. This makes the redistribution dynamics of the process $\{Z_t\}$ different, while the other remains the same as above. When the process hits the absorbing state at time $t\!+\!1$, it relocates directly to one of the previously-visited nodes according to $\hmmu_{t}$ in (\ref{hist-emp2}). This can also be interpreted as choosing $Z_{t+1} \!:=\! Z_k$ with probability $\eta_t(k)$ in (\ref{time-weight}), $k \!=\! 0,1,\ldots,t$. Setting $w_k \!=\! 1$, or $\eta_t(k) \!=\! 1/(t\!+\!1)$, for all $k$ in (\ref{hist-emp2}) reduces to (\ref{hist-emp}).

The resulting process $\{Z_t\}$ is clearly non-Markovian and history-dependent. It is shown in~\cite{Benaim15,Benaim16} that for a large class of non-decreasing weight functions $w_k$, or sequences $\{\eta_t(t)\}_{t \geq 0}$,
\begin{equation}
\lim_{t \to \infty} \hmmu_{t} = \nnu \ \; \mbox{a.s.}, \ \mbox{and} \ \; \lim_{t \to \infty} \Ex\{\hmmu_{t}\} = \nnu,
\end{equation}
for any arbitrary initial state $Z_0$ with $\hmmu_{0} \!=\! \delta_{Z_{0}}$. Note that the allowed weight function $w_k$ includes, for example, $w_k \!=\! 1$ (constant), $w_k \!=\! k^a$ (polynomial), and $2^{k^b}$ (sub-exponential), where $a \!>\!1$ and $b \!\in\! (0,1)$. These weight functions make the sequence $\{\eta_t(t)\}_{t \geq 0}$ non-increasing in $t$ and satisfy the required conditions in~\cite{Benaim15,Benaim16}. We adopt this history-based process with reinforcement to estimate the desired QSD $\nnu \!=\! \ppi$ as an integral part of our NMMC method.

The basic operation of our NMMC method to achieve any desired distribution $\ppi$ on a directed graph $\GG$ or to estimate the EVC $\xx$ of the adjacency matrix $\A$ can be summarized as follows. Under our NMMC method, a set of non-Markovian random-walk agents are launched to crawl the graph, and each of them independently moves over $\GG$ by choosing the next node $Z_{t+1}$ at the current node $Z_t \!=\! i$ according to the transition probability $P_{ij} \!=\! Q_{ij}\gamma_{ij}$, before it gets absorbed and relocated. In other words, a move from node $i$ to $j$ is \emph{proposed} with probability $Q_{ij}$ following a directed edge $(i,j)$ on $\GG$. This move is then \emph{accepted} with probability $\gamma_{ij}$ as given in Theorem~\ref{thm1:general-qsd} and Corollary~\ref{cor:special-qsd} for achieving $\ppi$ (or Corollary~\ref{cor:mapping-EVC} for estimating the EVC $\xx$). Otherwise, it is \emph{rejected} in which case the agent is instantaneously relocated (or redistributed) to one of the previously-visited nodes according to the historical empirical distribution $\hmmu_{t}$ in (\ref{hist-emp2}). Figure~\ref{fig:qsd} illustrates the overall operation of our NMMC method.

We note that the history-dependent/reinforced random walks are quite relevant and amenable to practical implementation on most popular online social networks, since the crawler (e.g., a simple random walk) continues to collect the ID (e.g., URL) of each node visited as it explores the graph and can always relocate/redistribute to one of the previously-visited nodes, while keeping track of which nodes have been visited up until now. The historical empirical distribution governs not only the dynamics of each crawling agent, but it also enables us to estimate the target QSD $\nnu \!=\! \ppi$. Therefore, our NMMC method provides the same benefit and fruitful denouement that we cherished from the standard MCMC methods that are rooted in the ergodic theorem in (\ref{ergodic}).

%-------------------------------------------
\subsection{Performance Enhancements}
\label{subse:our-algo}

So far, we have demonstrated how any ergodic (inherently nonreversible) Markov chain $\q$ with unknown $\mmu$ can be used to achieve a given desired distribution $\ppi$ on a directed graph $\GG$ and to estimate the EVC $\xx$ of the adjacency matrix $\A$. It is essentially achieved by combining our mapping $\ppi$ or $\xx$ to the QSD $\nnu$ of a related transient Markov chain and the recent development in the mathematics literature as to estimating the QSD via a set of non-Markov chain, history-dependent random walks.

As shall be seen later in Section~\ref{se:simu}, the direct application of the non-Markovian random walks with historical empirical distribution $\hmmu_{t}$ may not be satisfactory due to a possible imbalance between their diffusion and redistribution, both of which are the necessary integral components. In this section, we present several ways to make the crawling walks achieve a right balance between their diffusion and redistribution, which translates into speeding up the convergence of $\hmmu_{t}$ to the desired QSD $\nnu \!=\! \ppi$ (or $\nnu \!=\! \xx$).

\vspace{1mm}
\noindent \textbf{Initial empirical measure $\hmmu_{0}$.} We next revisit the historical empirical distribution $\hmmu_{t}$ in (\ref{hist-emp2}). Letting $W_t \!:=\! \sum^t_{k=0} w_k$ for a given positive weight sequence,
 $\{ w_{k} \}_{k \ge 0}$, we observe that (\ref{hist-emp2}) can be written as, for $t \geq 1$,
\begin{equation}
  W_t \hmmu_{t}  = W_{t-1} \hmmu_{t-1}  + w_t \delta_{Z_t},
\end{equation}
which then leads to
\begin{align}
  \hmmu_{t}  &=  \frac{W_{t-1}}{W_t}\hmmu_{t-1} + \frac{w_t}{W_t} \delta_{Z_t} = \lt(1 - \frac{w_t}{W_t}\rt) \hmmu_{t-1} + \frac{w_t}{W_t} \delta_{Z_t} \nonumber \\
  &= \lt[1 - \eta_{t}(t)\rt] \hmmu_{t-1} + \eta_{t}(t)\delta_{Z_t}, \label{recursive}
\end{align}
from the identities $W_t \!=\! W_{t-1} \!+\! w_t$ and $\eta_{t}(t) \!=\! w_t/W_t$. This implies that the historical empirical distribution $\hmmu_{t}$ can be computed recursively based on (\ref{recursive}). Although the convergence of $\hmmu_{t}$ to the QSD $\nnu$ was shown for any arbitrary initial state $Z_0$ with $\hmmu_{0} \!=\! \delta_{Z_{0}}$ in~\cite{Benaim15,Benaim16}, its proof based on the stochastic approximation theory and the induced differential equations in fact does not require the initial distribution $\hmmu_{0}$ in (\ref{recursive}) to be in the form of $\delta_{Z_{0}}$, where $\hmmu_{0}$ has a value of 1 at the element $Z_0$ and zero at all other elements, but can be made arbitrary as long as $\hmmu_{0}$ is a probability distribution (vector) on $\N$.

%================================================================
\begin{algorithm}[t!]
\caption{Non-Markovian Monte Carlo (at time $t \ge 1$)}\label{alg1}
\begin{algorithmic}[1]

\REQUIRE $\q$, $\nnu$, $\hmmu_{0}$, and $w_k$

\STATE $i \leftarrow Z_{t-1}$ \qquad \ // the node at time $t\!-\!1$

\STATE Select node $j$ with probability $Q_{ij}$

\STATE Generate $u \sim U(0,1)$

\IF{$u \leq \gamma_{ij}$}

\STATE $Z_{t} \leftarrow j$ \qquad // the proposed move is accepted

\ELSE

\STATE /* the agent is relocated according to $\hmmu_{t}$ */

\STATE Select node $k$ with probability $\hmu_{t}(k)$

\STATE $Z_{t} \leftarrow k$

\ENDIF

\STATE /* update the historical empirical distribution $\hmmu_t$ */

\STATE $ \hmmu_t \leftarrow \lt[1 - \eta_{t}(t)\rt] \hmmu_{t-1} + \eta_{t}(t)\delta_{Z_t} $

\end{algorithmic}
\end{algorithm}
%==============================================================

This freedom on the choice of $\hmmu_{0}$ gives us a great degree of flexibility. Clearly, the choice of $\hmmu_{0}$ affects the whole evolution of the process $\{Z_t\}$ for all time $t$ via redistribution mechanism, and thus can be made to our advantage. For example, a judicious choice of $\hmmu_{0}$ can translate into a situation where a crawling agent $\{Z_t\}$ can start with an `already-established' history to some degree or more generally, with any \emph{arbitrary (normalized) visit counts} assigned to a subset of nodes on the graph to begin with. Thus, our proposed NMMC method can be generalized with arbitrary $\hmmu_{0}$ and its overall procedure is now summarized in Algorithm~\ref{alg1}. Here the proposed chain $\q$, the initial empirical measure $\hmmu_{0}$, and the (unnormalized) weight function $w_k$ should be specified as inputs into Algorithm~\ref{alg1}. Note that the initial position of a crawling agent $Z_0$ is randomly chosen according to $\hmmu_{0}$.

\vspace{1mm}
\noindent \textbf{Dynamic and adaptive choice of parameters.} Our proposed NMMC method is mathematically guaranteed but its speed of convergence depends on the form of the acceptance probability $\gamma_{ij}$, along with $Q_{ij}$ and the target QSD, which interweaves diffusion and redistribution -- two seemingly opposite (yet integral) components of the method. As we have seen from Section~\ref{se:qsd-mapping}, the specific form of $\gamma_{ij}$ is different depending on the choice of $\q \!=\! [Q_{ij}]$ and what to estimate via the target QSD, i.e., the target distribution $\ppi$ or the EVC $\xx$ of the adjacency matrix $\A$. It is particularly desirable to maintain some reasonable values of $\gamma_{ij}$ over almost every $(i,j) \in \E$, or more precisely, transition pair $(i,j)$ with $Q_{ij} \!>\! 0$, so as to strike the right balance between diffusion and redistribution to achieve faster convergence.

We take a closer look at the acceptance probability $\gamma_{ij}$ as a function of $Q_{ij}$ and the target QSD. Recall that from Theorem~\ref{thm1:general-qsd} and Corollary~\ref{cor:special-qsd}, it is given by $\gamma_{ij} \!=\! b_{ij}/c$ in (\ref{r-qij}) for achieving (or sampling from) a given distribution $\ppi$ on the directed graph $\GG$, and the corresponding normalizing constant $c$ can be obtained based on $c \!=\! \max_{i,j} b_{ij}$ (the smallest possible value for $c$). Similarly, for estimating the EVC $\xx$ of the adjacency matrix $\A$, it can be chosen as $c \!=\! \max_{i,j} 1/Q_{ij}$ for $\gamma_{ij} \!=\! A_{ij}/(cQ_{ij})$ from Corollary~\ref{cor:special-qsd}. The global constant $c$ over the graph, however, may not be available in advance or could take a long time to estimate, although its real-time (dynamic) estimation can be incorporated as shall be explained shortly. Furthermore, the resulting values of $\gamma_{ij}$ may vary over $(i,j)\!\in\!\E$ and they can also be small for some $(i,j)$ pairs. Such small acceptance probabilities in turn make the agent get absorbed to `0' and redistributed to the already visited nodes. The redistribution mechanism is of course an important and necessary component of our algorithm to shape up the historical empirical distribution $\hmmu_t$ toward the target QSD. But, if it is too strong (very small $\gamma_{ij}$), the agent may virtually get stuck on a small set of visited nodes before diffusing out to a potentially vast unvisited portion of the graph (state space) for exploration.

%================================================================
\begin{algorithm}[t!]
\caption{Dynamic Non-Markovian Monte Carlo}\label{alg2}
\begin{algorithmic}[1]
\STATE /* This pseudocode only replaces lines 3--10 of Algorithm~\ref{alg1} */

\STATE Generate $u_1, u_2 \sim U(0,1)$

\IF{$u_1 \leq p$ and $c_{t-1} < b_{ij}$}

\STATE $c_t \leftarrow b_{ij}$  \qquad // $c_t$ is updated by $b_{ij}$

\ELSE

\STATE $c_t \leftarrow c_{t-1}$ \quad \; // $c_{t-1}$ is kept

\ENDIF

\IF{$u_2 \leq \min\{1,b_{ij}/c_t\}$}

\STATE $Z_{t} \leftarrow j$  \qquad \; // the proposed move is accepted

\ELSE

\STATE Select node $k$ with probability $\hmu_{t}(k)$

\STATE $Z_{t} \leftarrow k$  \qquad \ // the agent is redistributed

\ENDIF

\end{algorithmic}
\end{algorithm}
%==============================================================

To remedy this possibly slow diffusion problem, we observe that the target QSD $\nnu \!=\! \ppi$ when achieving any desired distribution $\ppi$ is completely controllable to our benefit. To be precise, if it is of primary concern to estimate $\Ex_{\ppi}\{f\} \!=\! \sum_{i \in \N} f(i)\pi(i)$ for a given sampling function $f: \N \!\to\! \R$, which is often the case for `undirected' graph sampling~\cite{WillingerToN09,RibeiroIMC10,GjokaJSAC11,LeeSIGMETRICS12,HardimanWWW13,BrunoSIGMETRICS16}, the target distribution can be \emph{controlled} together with a so-called importance sampling method in MCMC~\cite{Liu04}. With an alternative sampling distribution $\ppi'$, it is to estimate
\begin{equation}
  \Ex_{\ppi'}\lt\{f \frac{\ppi}{\ppi'}\rt\} = \sum_{i \in \N} f(i) \frac{\pi(i)}{\pi'(i)}\pi'(i) = \Ex_{\ppi}\{f\}.
  \label{importance}
\end{equation}
Thus, we can choose the target distribution $\ppi$ along with $\q$ so that it translates into more balanced $\gamma_{ij}$. As shall be demonstrated later in Section~\ref{se:simu}, with $\q \!=\! \q^{\textsf{srw}}$, setting the target $\ppi$ to be proportional to node in-degrees, i.e., $\pi(i) \propto d^-_i, i \!\in\! \N$, turns out to be very effective in achieving fast convergence speed. This improvement is somewhat expected since the simple random walk would visit high in-degree nodes more frequently, which helps the historical empirical distribution $\hmmu$ get closer to this target $\ppi$ quickly.

To further alleviate the slow diffusion problem and also enable online estimation of the normalizing constant $c$, we allow $c$ to be time-varying, say $c_t$, so that it can be learned through graph exploration by the crawling agent. For ease of exposition, we focus on achieving the desired $\ppi$ over the graph, but everything can easily be carried over to estimating the EVC $\xx$ of $\A$. We set the acceptance probability $\gamma_{ij}(t) \! := \! b_{ij}/c_t$, which is updated in a way that $c_t \!:=\! b_{ij}$ at every time $t$ if $c_{t-1} \!<\! b_{ij}$ for
a pair of the current node $i$ and its neighbor $j$ as a potential next node, and $c_t \!:=\! c_{t-1}$ if otherwise. In other words, the agent keeps track of the maximum $c_t$ over all the values of $b_{ij}$ for node pairs that have been discovered up to time $t$. Note that $c_0 \!=\! 1$ initially.

We can take one step further to foster the initial diffusion speed to a greater extent, by intentionally slowing down the growth of $c_t$ for higher values of $\gamma_{ij}(t)$ (being closer to one). Specifically, with probability $p \!>\! 0$, we perform the aforementioned update of $c_t$, while with probability $1 \!-\! p$, the time-varying normalizing constant $c_t \!=\! c_{t-1}$ is kept the same even if $c_{t-1} \!<\! b_{ij}$, in which case the proposed move to $j$ is always accepted. The entire procedure of `dynamic' NMMC method is summarized in Algorithm~\ref{alg2}, which replaces the lines 3--10 of Algorithm~\ref{alg1}. Note that the resulting acceptance probability becomes $\gamma_{ij}(t) \!=\! \min\{1, b_{ij}/c_t \}$ as seen from Algorithm~\ref{alg2}.

While the NMMC method with time-varying $c_t$ is seemingly heuristic, it is in fact well supported by Theorem~\ref{thm1:general-qsd} and our observation on the initial empirical measure $\hmmu_{0}$, i.e., \emph{the theoretical guarantee remains intact}. To see this, we define a stopping time as $t^* \!:=\! \min\{t \!\geq 0\! ~|~ c_t \!=\! c\}$. Since the graph is finite, this $t^*$ is almost surely finite for $p \!>\! 0$. We would run the NMMC method with $c_t$ until $t^*$, and then reset the time clock to zero at $t^*$ and treat the historical empirical distribution $\hmmu_{t^*}$ to be newly set as $\hmmu_{0}$. It then follows from our observation pertaining to $\hmmu_{0}$, along with Theorem~\ref{thm1:general-qsd}, that the historical empirical measure $\hmmu_t$ converges to the given target QSD $\nnu \!=\! \ppi$. In addition to the theoretical guarantees, we next demonstrate that our NMMC method with a suitable choice of the $c_{t}$-updating probability $p$ indeed greatly improves the speed of convergence to the target QSD $\nnu \!=\! \ppi$ by effectively combating the slow-diffusion problem triggered by overly frequent redistribution to already-visited sites.

%---------------------------------------------------------------------
\begin{figure}[t!]
    \centering
    \vspace{-0mm}
    \hspace{-0mm}
    \subfigure[$\ppi = \uu$; static case ]{\includegraphics[width=0.31\textwidth]{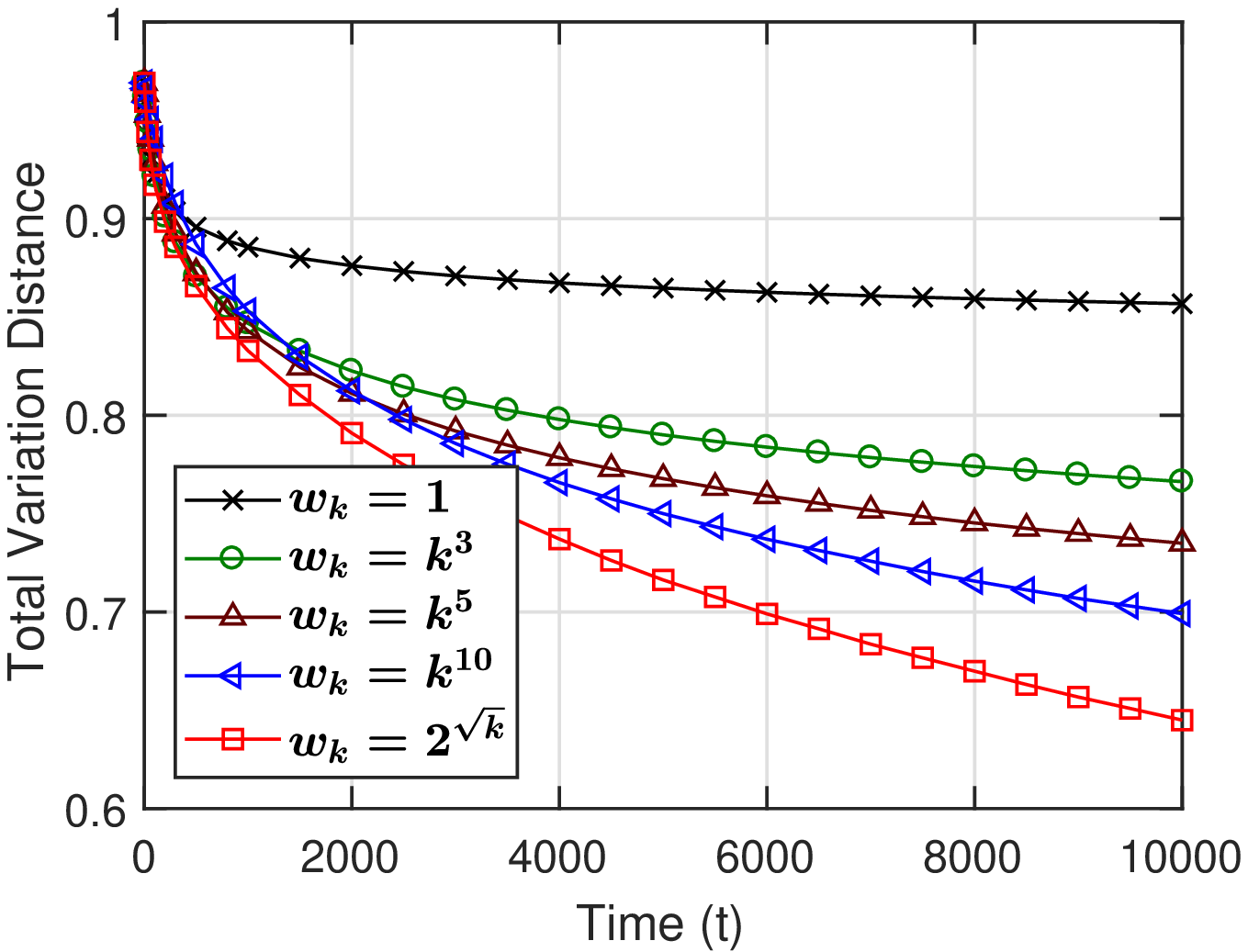}}
    \hspace{1mm}
    \subfigure[$\ppi = \uu$; $p = 1$]{\includegraphics[width=0.31\textwidth]{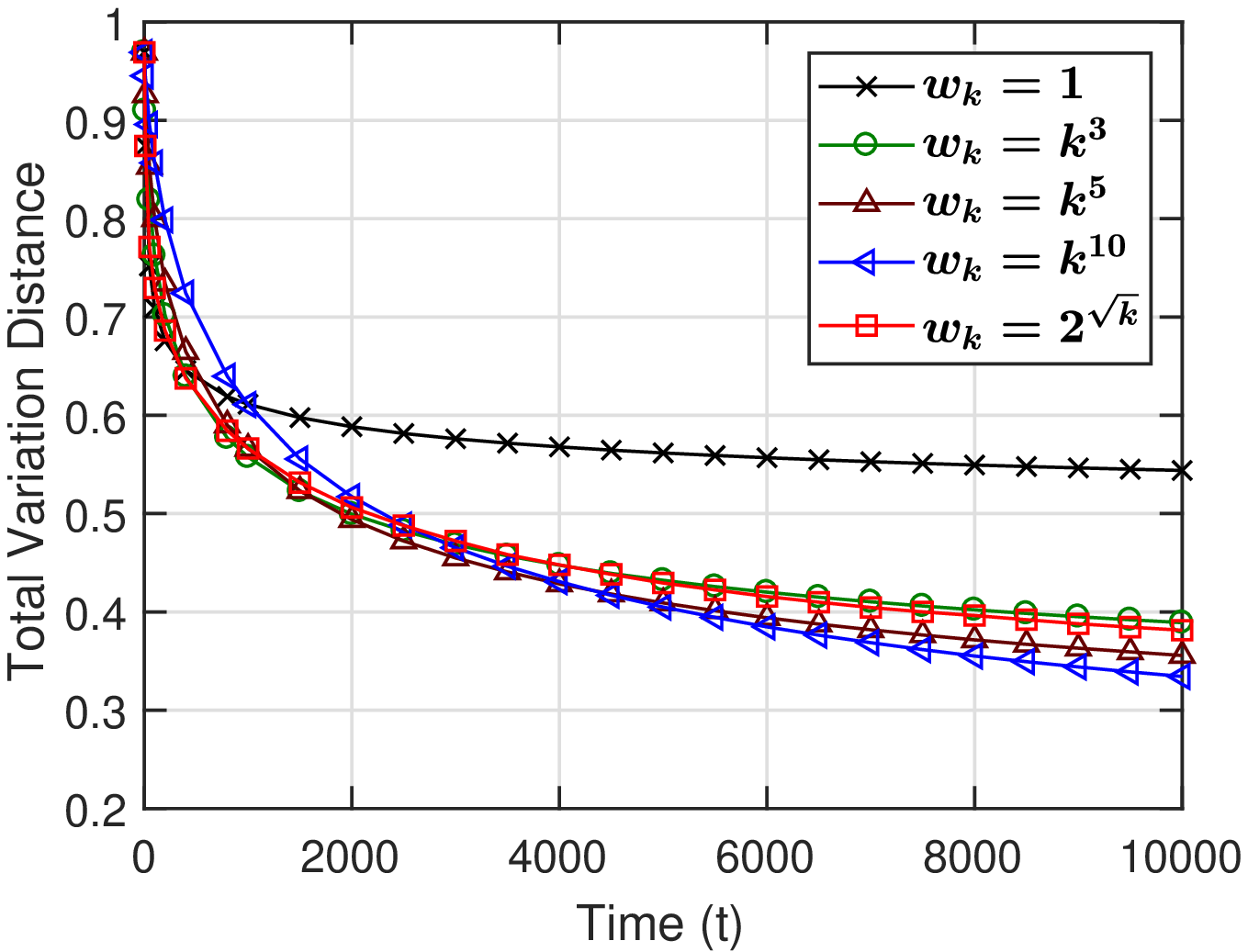}}
    \hspace{1mm}
    \subfigure[$\ppi = \uu$; $p = 0.1$]{\includegraphics[width=0.31\textwidth]{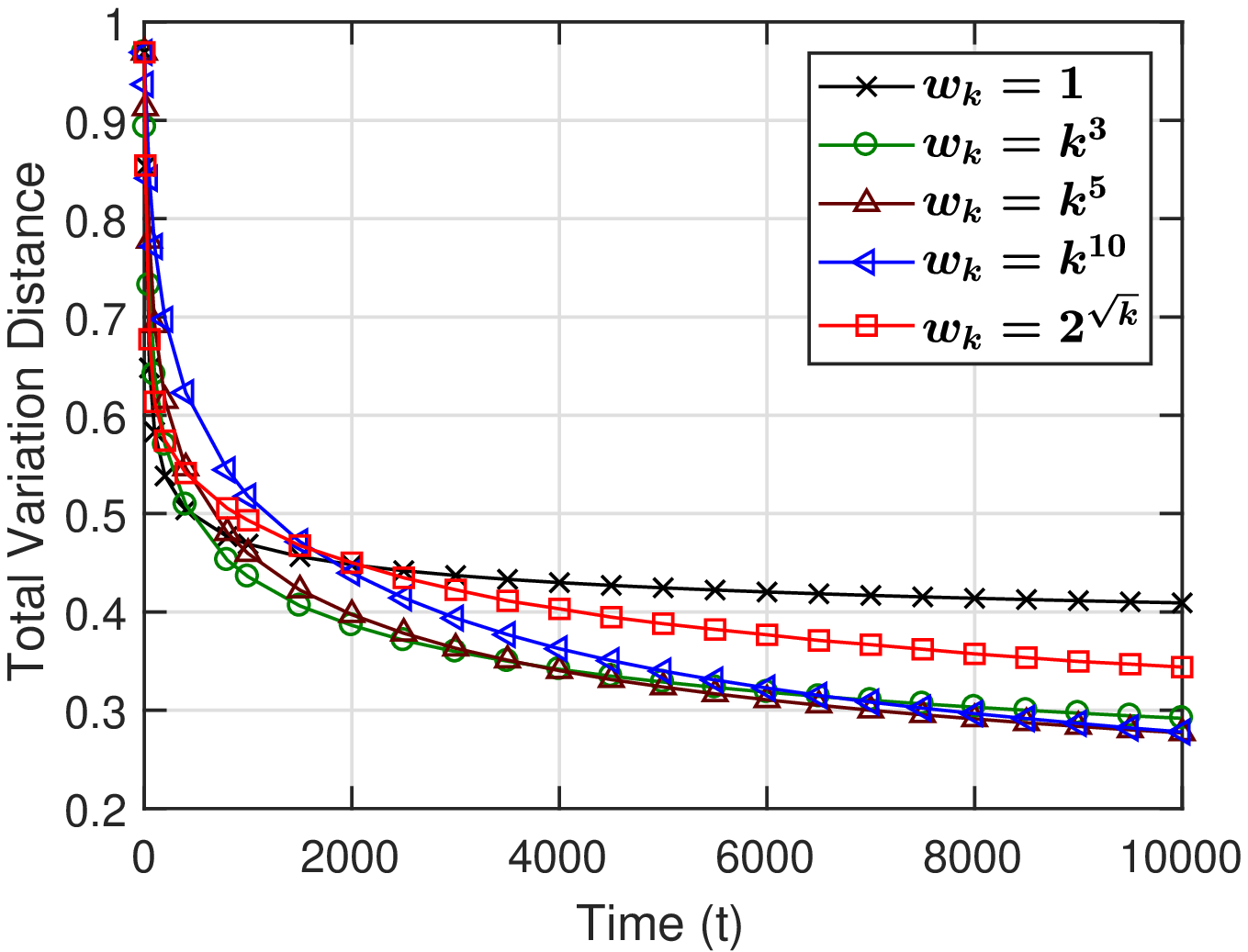}}
    \vspace{-1mm}\\
    \subfigure[$\ppi = \uu$; $p = 0.01$]{\includegraphics[width=0.31\textwidth]{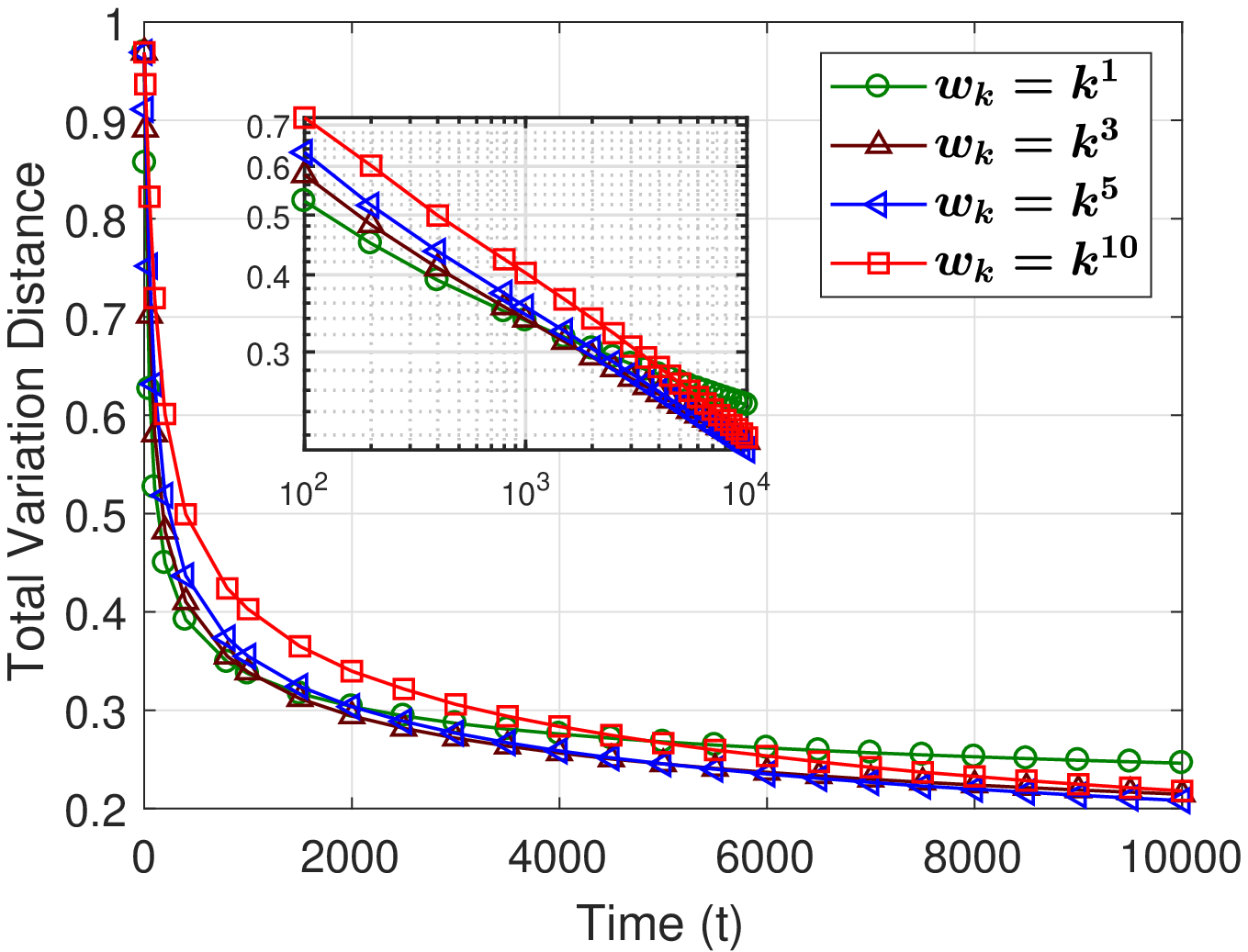}}
    \hspace{1mm}
    \subfigure[$\pi(i) \propto d^-_i$; $p = 0.01$; linear scale ]{\includegraphics[width=0.31\textwidth]{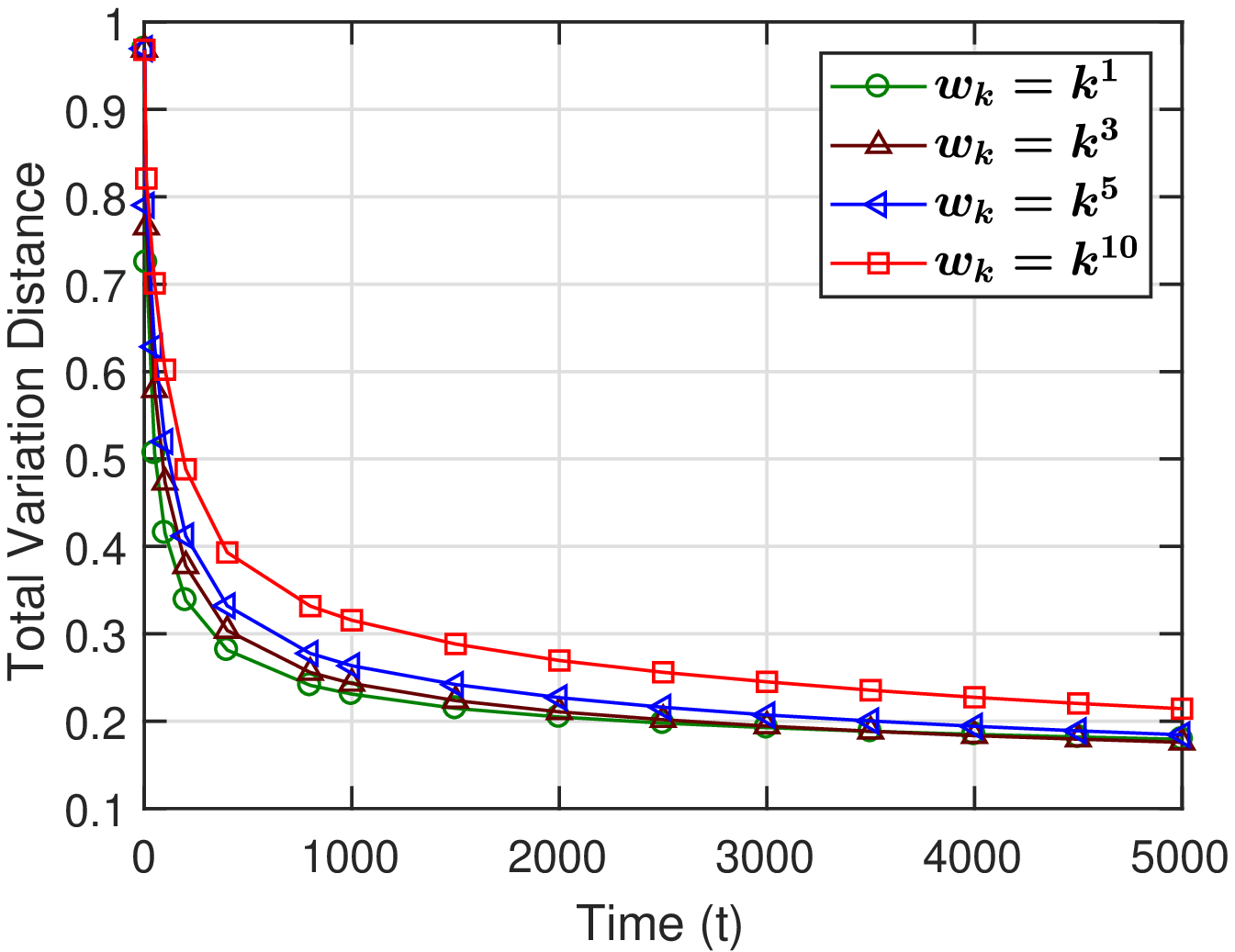}}
    \hspace{1mm}
    \subfigure[$\pi(i) \propto d^-_i$; $p = 0.01$; log-log scale ]{\includegraphics[width=0.3\textwidth]{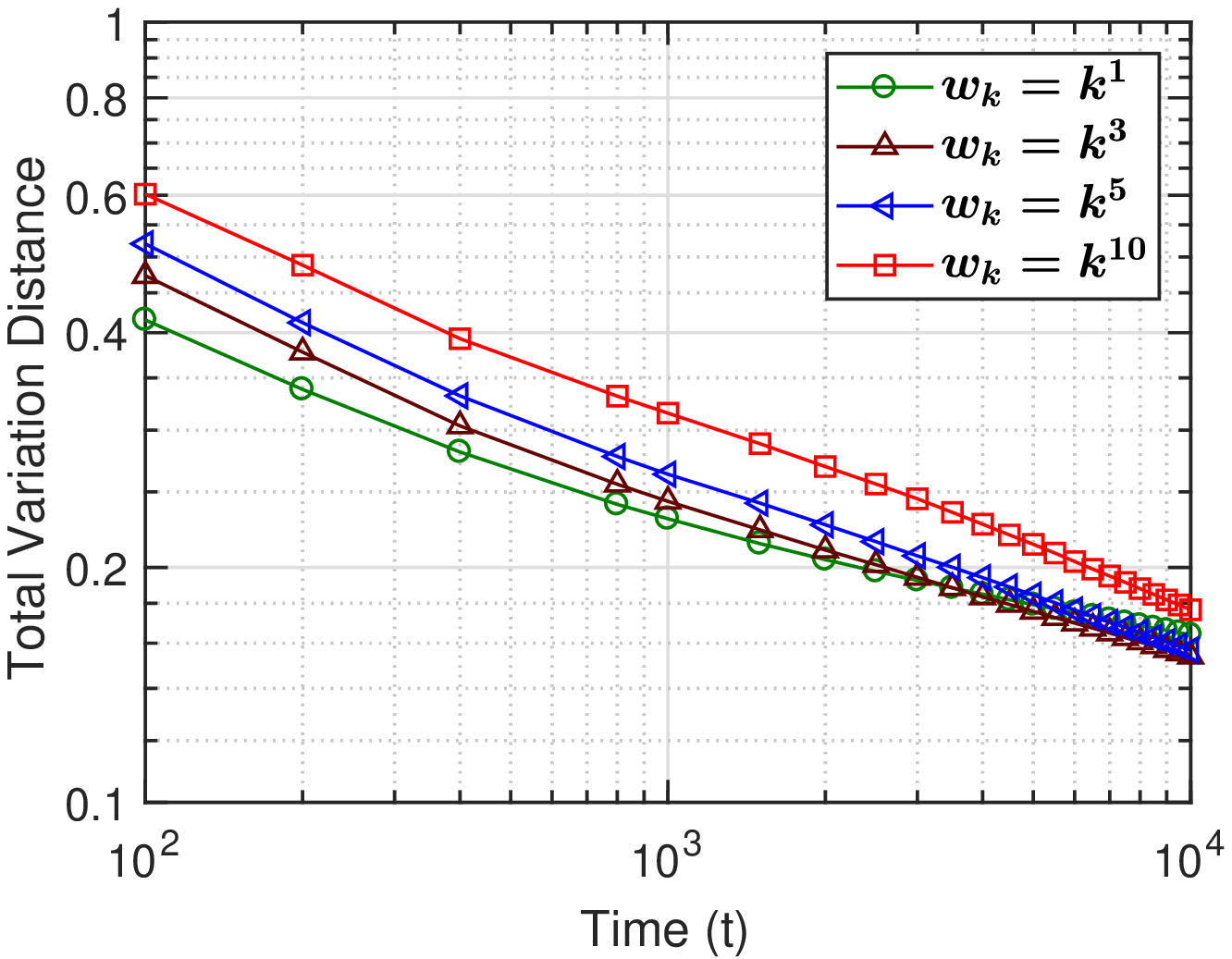}}
    \vspace{-1mm}\\
    \subfigure[EVC $\xx$; $p = 0.01$; linear scale ]{\includegraphics[width=0.31\textwidth]{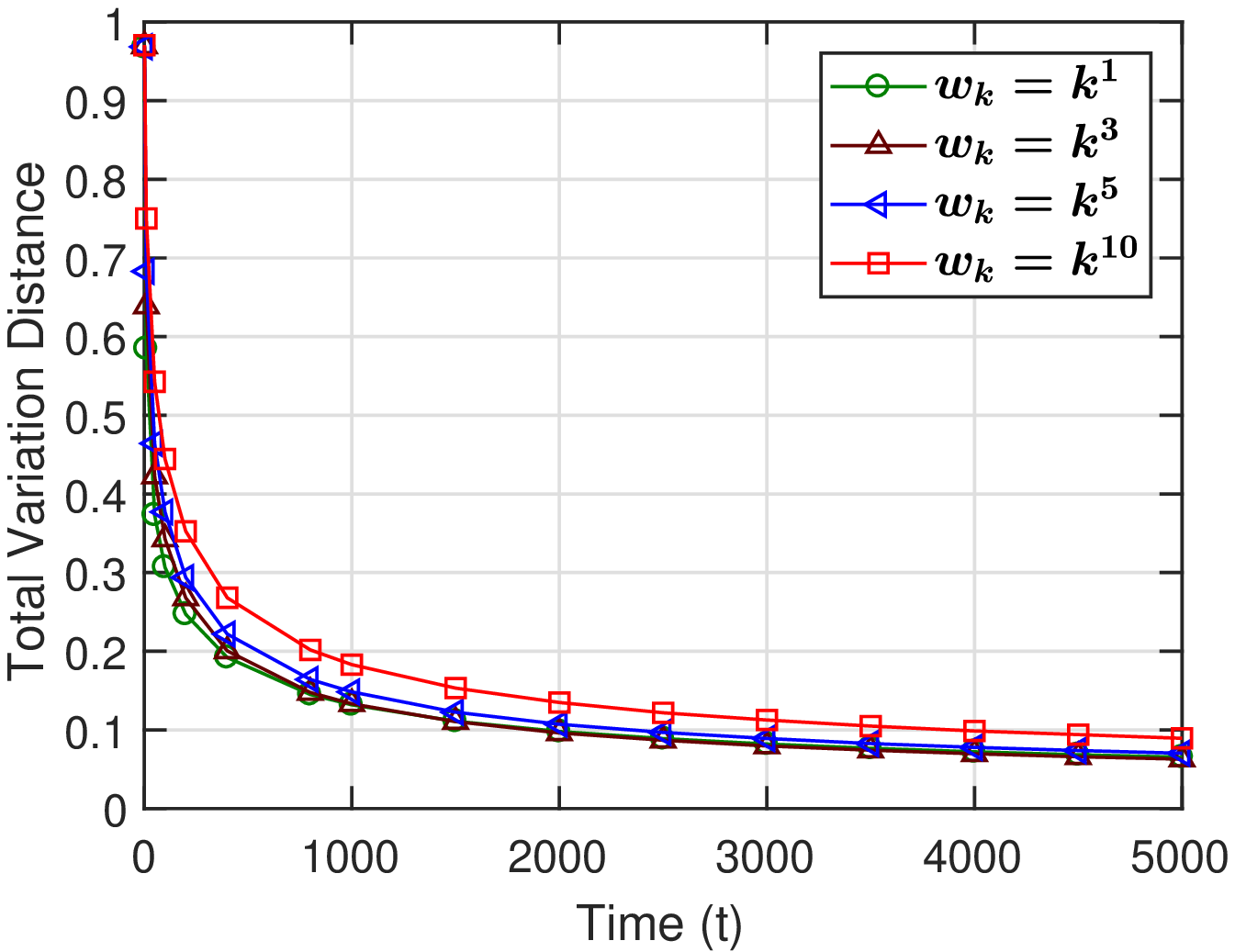}}
    \hspace{1mm}
    \subfigure[EVC $\xx$; $p = 0.01$; log-log scale]{\includegraphics[width=0.31\textwidth]{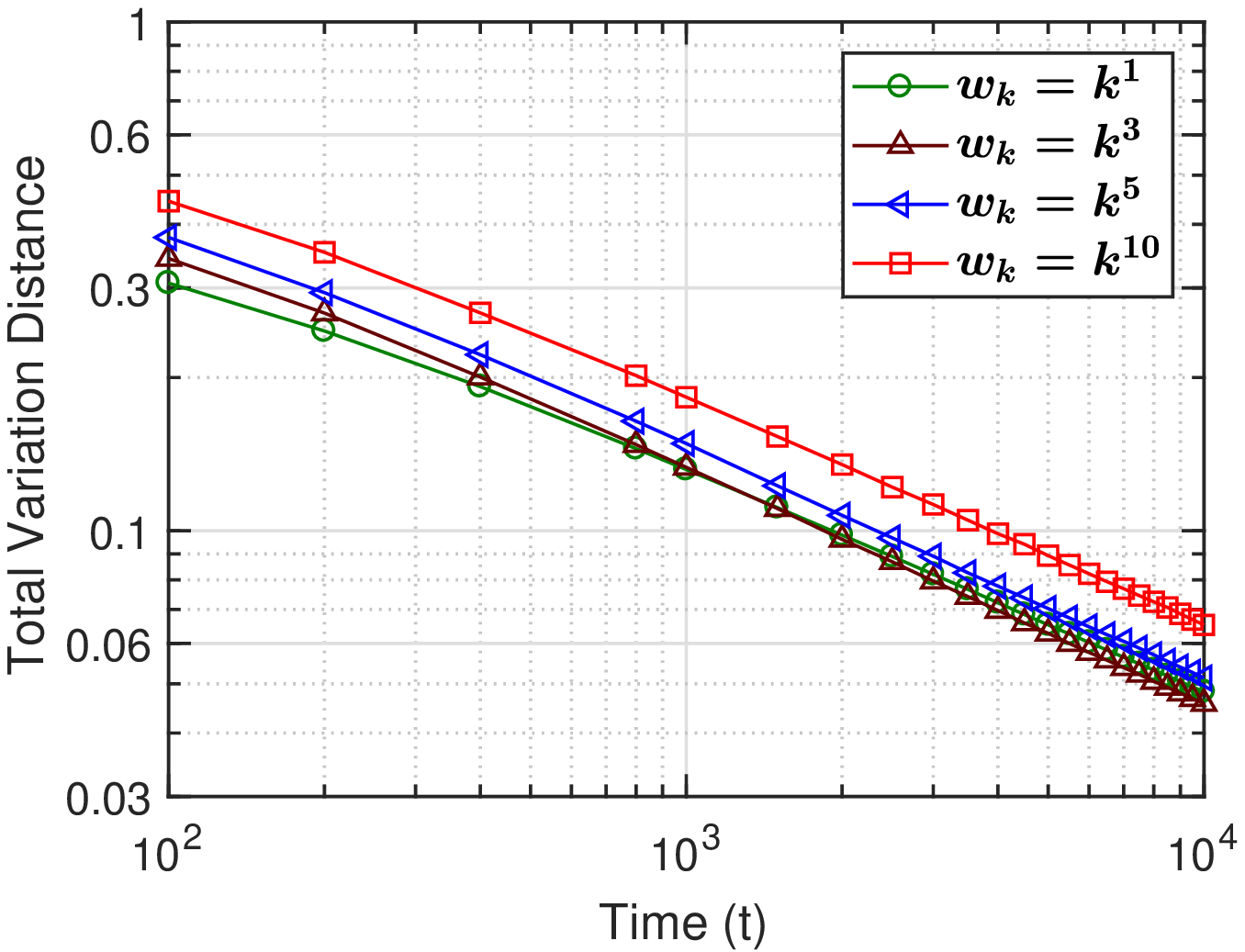}}
    \vspace{-3mm}
    \caption{The TVD results obtained under the Gnutella graph.} \label{fig:gnutella05}
    \vspace{-3mm}
\end{figure}
%-----------------------------------------------------------------------

\section{Simulation Results}\label{se:simu}

In this section, we present simulation results to support our theoretical findings. To this end, we consider four real-world network datasets available in the SNAP repository~\cite{snap} as test-case directed graphs, whose statistics are summarized in Table~\ref{graph}. The numbers of nodes and edges in the largest strongly connected component (LSCC) of each graph are given in the fourth and fifth columns respectively. Note that self-loops, if present, e.g., in the original dataset of Slashdot graph, are removed for our simulations.

\begin{table}[htbp]
\centering
\begin{tabular}{|c|c|c|c|c|c|}
\hline & \; $\#$ nodes \; & \; $\#$ edges \; & $\#$ nodes (LSCC) & $\#$  edges (LSCC) \\ \hline
Gnutella  & 8,846    & 31,839   & 3,234   & 13,453  \\ \hline
Slashdot  & 82,168   & 948,464   & 71,307 & 912,381 \\ \hline
Wiki-Talk  & 2,394,385  & 5,021,410   & 111,881 & 1,477,893 \\ \hline
Amazon  & 400,727  & 3,200,440   & 380,167 & 3,069,889 \\ \hline
\end{tabular}
\vspace{2mm}
\caption{Statistics of the datasets }\label{graph}
\vspace{-6mm}
\end{table}

The primary focus here is, through numerical simulations, to confirm that the historical empirical distribution $\hmmu_t$ by our NMMC method in Algorithms~\ref{alg1} and~\ref{alg2} (the original one and its dynamic extension) converges to the target QSD $\nnu \!=\! \ppi$ (or $\nnu \!=\! \xx$) and to evaluate the speed of convergence of $\hmmu_t$ to $\nnu$. As a performance metric, we use the total variation distance (TVD)\footnote{Note that our purpose here is completely different from the typical use of TVD for characterizing the speed of convergence of an ergodic, reversible Markov chain on a finite state-space, say $\q$, to its stationary distribution, say $\mmu$. To be precise, for any initial $\mmu_0$, $\| \mmu_0 \q^t - \mmu \|_{TV} \leq \kappa \rho^t$ for some constant $\kappa$, where $\rho$ is the second largest eigenvalue modulus of $\q$ with $\rho \!<\! 1$~\cite{Bremaud99}. This exponential convergence speed is for the `marginal' distribution $\mmu_0 \q^t$ at time $t$ to $\mmu$ as $t$ increases. In contrast, we are interested in the convergence of the historical empirical distribution $\hmmu_t$, which encompasses the entire history of the underlying process with proper time weights.} between the historical empirical distribution $\hmmu_t$ and the target QSD $\nnu$, which is given by
\begin{equation}
\label{def:dtv}
\| \hmmu_t - \nnu \|_{TV} \triangleq \max_{E\subseteq\N} |\hmu_t(E) - \nu(E)| =
\frac{1}{2}  \sum_{i\in\N} |\hmu_{t}(i) - \nu(i)|.
\end{equation}

The inputs of Algorithms~\ref{alg1} and~\ref{alg2} are chosen as follows. The initial position $Z_0$ of each agent is randomly determined, with the initial empirical measure $\hmmu_0 \!=\! \delta_{Z_0}$. For the target QSD $\nnu$ to be mapped and estimated, we consider (i) uniform distribution, i.e., $\pi(j) \!=\! 1/n$, (ii) in-degree distribution, i.e., $\pi(j) \!\propto\! d^-_j$, and (iii) the EVC $\xx$ of the adjacency matrix $\A$. We also consider $\q \!=\! \q^{\textsf{srw}}$, where $Q_{ij} \!=\! A_{ij}/d^+_i$ for $i,j \!\in\! \N$. We use the LSCC of each graph for simulations of all these cases. In order to understand how different choices of unnormalized weight functions $w_k$ would affect the speed of convergence, we vary weight functions as $w_k \!=\! 1, k^1, k^3, k^5, k^{10}, 2^{\sqrt{k}}$, with increasing tendency to relocate to recently-visited nodes upon redistribution. For each simulation we use a set of independent crawling agents, which are independently moving over the graph. While each agent maintains its own historical empirical distribution $\hmmu_t$ for redistribution in case of individual absorption, all the historical empirical distributions are combined and normalized as a unified distribution for the convergence test.

We first present simulation results for Gnutella graph. We use 100 independent crawling agents for all the results. Figure~\ref{fig:gnutella05}(a) shows the TVD results
when achieving a uniform distribution $\uu$ under the NMMC method in Algorithm~\ref{alg1}, where the global constant $c$ is known a priori. Recall that for achieving $\ppi \!=\! \uu$, the acceptance probability $\gamma_{ij}$ in (\ref{r-qij}) becomes $\gamma_{ij} \!=\! b_{ij}/c$ with $b_{ij} \!=\! d^+_i/d^-_j$ and $c \!=\! \max_{i,j}b_{ij}$. This global constant $c$ can be as large as $c \!=\! \max_{i} d_i^+$ and this was the case in this simulation. Figure~\ref{fig:gnutella05}(a) shows that $\hmmu_t$ is getting closer to the target QSD $\nnu \!=\! \ppi$ for all considered weight functions $w_k$. In addition, Figure~\ref{fig:gnutella05}(a) exhibits that
heavier emphasis on latter visited sites leads to faster convergence of $\hmmu_{t}$ to $\ppi$. We would expect that it is attributed to higher weights of being relocated toward more recently-visited nodes that gives rise to faster diffusion on $\GG$ and quicker exploration of the graph. Nonetheless, the overall performance may be considered unsatisfactory as the resulting TVD values are still high. This means that the redistributions still take place too often, resulting in an imbalance between diffusion and redistribution.

In Figure~\ref{fig:gnutella05}(b)--(d), we present the TVD results for estimating the same uniform $\uu$ under the \emph{dynamic} NMMC method in Algorithm~\ref{alg2} while varying the value of the $c_t$-updating probability $p$ from 1 to 0.01. It turns out that, with the choice of $p \! = \! 0.01$, the TVD is reduced by more than 0.1 at every time step when compared to that of $p \!=\! 1$. To achieve the same precision in terms of the TVD, the number of time steps $t$ required for $p \!=\! 1$ would be much larger than that of $p \!=\! 0.01$ by at least an order of magnitude. Observing a clear decreasing trend with $p$, one may think that $p\!=\!0$ would be the best. However, when $p\!=\!0$, $c_t$ is never updated over $t$ as seen from our Algorithm~\ref{alg2}, i.e., $c_t\!=\!c_0\!=\!1$ for all $t$. Still, it is possible that $\gamma_{ij}(t)\!=\! \min\{1, b_{ij}/c_t\} \!=\! \min\{1, b_{ij}\}$ is less than one, meaning that a proposed move is not always accepted. Nonetheless, since $c_t$ never approaches the true value of $c$ when $p\!=\!0$, there is no theoretical guarantee for the convergence either. Accordingly, the resulting TVD does not decrease as shown in Figure~\ref{fig:gnutella05-problem}(a), where $w_k \!=\! 1$ is considered. Furthermore, in view of faster diffusion with smaller $p$ leading to a better speed of convergence, we can take another extreme case by simply running the simple random walk all the time, which always diffuses without any redistribution mechanism. It turns out that the resulting TVD again does not decrease as shown in Figure~\ref{fig:gnutella05-problem}(b). The observations so far collectively indicate an importance of having a right balance between diffusion and redistribution to shape up the historical empirical distribution $\hmmu_t$ toward the target QSD and to achieve faster convergence.

%---------------------------------------------------------------------
\begin{figure}[t!]
    \centering
    \vspace{-0mm}
    \subfigure[$p = 0$ vs. $p = 0.01$ ]{\includegraphics[width=0.35\textwidth]{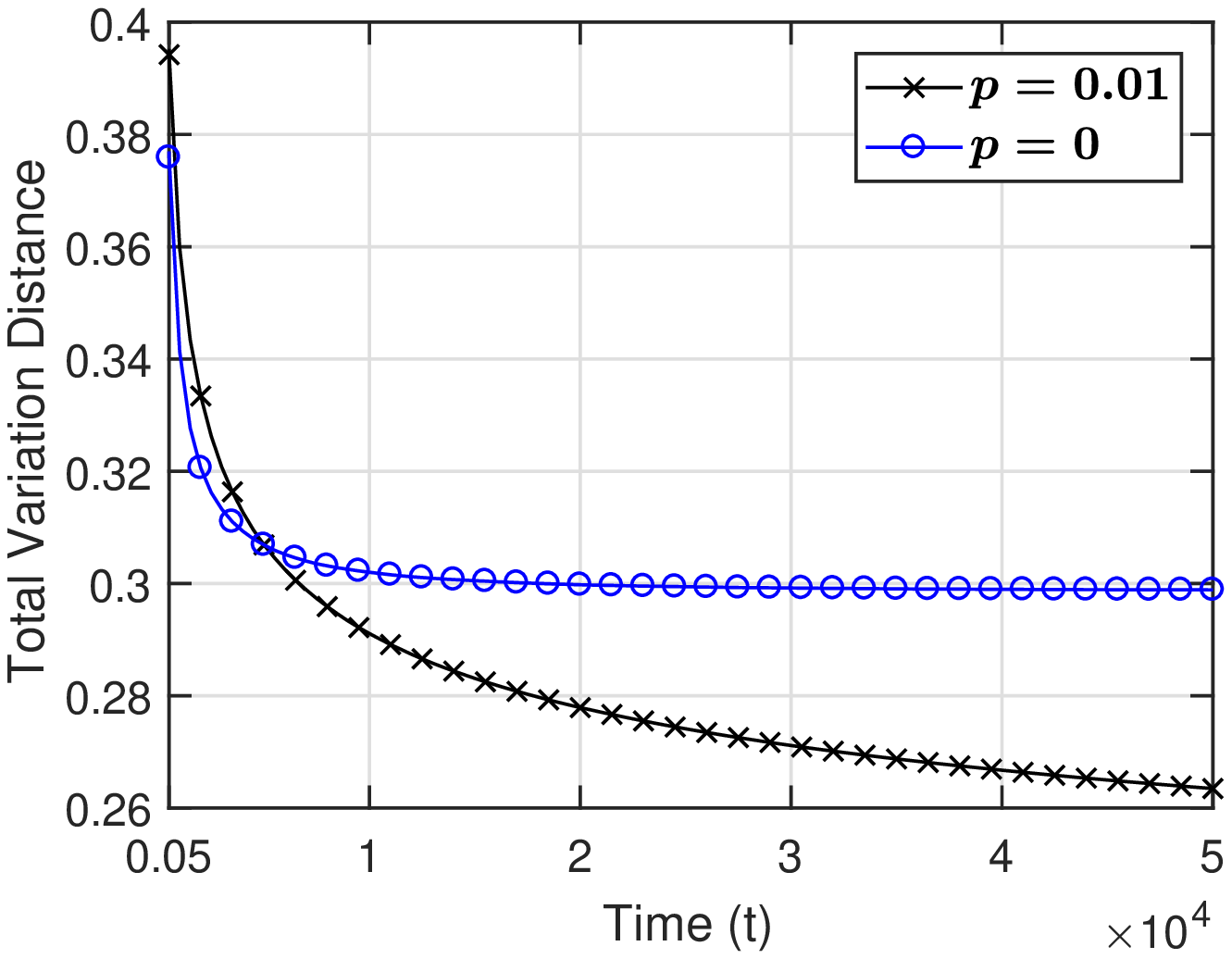}}
    \hspace{6mm}
    \subfigure[$\q^{\textsf{srw}}$ with no redistribution]{\includegraphics[width=0.35\textwidth]{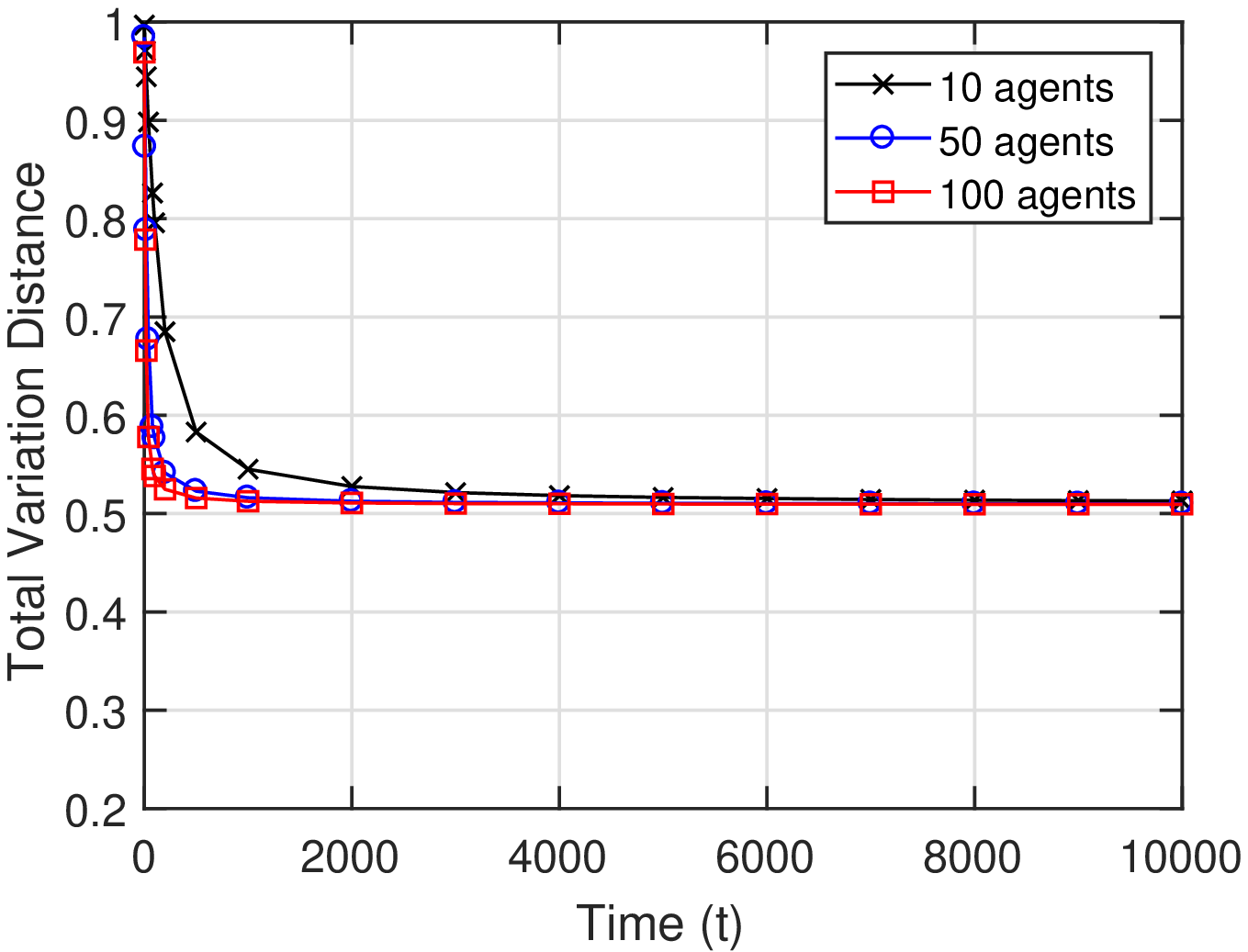}}
    \vspace{-3mm}
    \caption{Failure of convergence $\hmmu_t$ to $\ppi \!=\! \uu$ when using two possible extreme methods that put too much weight on diffusion of the underlying crawling agent.} \label{fig:gnutella05-problem}
    \vspace{-3mm}
\end{figure}
%-----------------------------------------------------------------------

We further take a closer look into the cases with $w_k \!=\! k^\alpha$ and $w_k \!=\! 2^{\sqrt{k}}$. While the case with $w_k \!=\! 2^{\sqrt{k}}$ is the best with no dynamic update of $c$ as seen from Figure~\ref{fig:gnutella05}(a), it becomes worse than the case with $w_k \!=\! k^\alpha$ when using the dynamic NMMC method with $c_t$-updating probability $p$ as shown in Figure~\ref{fig:gnutella05}(b)--(c). The case with $w_k \!=\! 2^{\sqrt{k}}$ when combined with the dynamic updates of $c_t$ places over-emphasis on diffusion, which causes more harm than help. This again signifies the importance of striking the right balance between diffusion and redistribution of the reinforced walk $Z_t$ toward faster convergence, although the presence of such a crossover would depend on the choice of the directed graph $\GG$, target QSD $\nnu$ and the proposal chain $\q$. Furthermore, we have observed that the case with $w_k \!=\! k^\alpha$ is always better with the use of $c_t$-updating probability $p$ than the case with $w_k \!=\! 2^{\sqrt{k}}$, when achieving the in-degree distribution, i.e., $\pi(j) \!\propto\! d^-_j$, and the EVC $\xx$ of the adjacency matrix $\A$. We omit the results for brevity and exclude the case with $w_k \!=\! 2^{\sqrt{k}}$ for the subsequent numerical evaluations. Note that the acceptance probability $\gamma_{ij}$ in (\ref{r-qij}) becomes $\gamma_{ij} \!=\! d^+_i/(cd^-_i)$ with $c \!=\! \max_i d^+_i/d^-_i$ for the former, and $\gamma_{ij} \!=\! d^+_i/c$ with $c \!=\! \max_i d^+_i$ for the latter.

We instead report the results under the (unnormalized) weight function $w_k \!=\! k^\alpha$ with different values of $\alpha$ and $p \!=\! 0.01$ for all three cases of the target QSD in Figure~\ref{fig:gnutella05}(d)--(h). For achieving $\ppi \!=\! \uu$, the case with $\alpha \!=\! 1$ exhibits the fastest decaying rate in its TVD curve initially, but later becomes slower than the other cases. Similar tendency with $\alpha \!=\! 1$ is observed for achieving $\pi(j) \!\propto\! d^-_j$ and estimating the EVC $\xx$, as shown in Figure~\ref{fig:gnutella05}(e)-(h). On the other hand, we see that the case with $\alpha \!=\! 10$ eventually becomes the best with the fastest decaying rate, as we can see from the inset figure of Figure~\ref{fig:gnutella05}(d). We also observe that the eventual winner is the case with $\alpha \!=\! 5$ for achieving $\pi(j) \!\propto\! d^-_j$, while the case with $\alpha \!=\! 3$ eventually appears to be the best for estimating the EVC $\xx$. This set of simulation results indicate that target QSD $\nnu$ clearly affects the extent of diffusion to achieve the right balance between diffusion and redistribution.

In addition, we observe that TVD values decay to zero polynomially fast, as evidenced in Figure~\ref{fig:gnutella05}(f), Figure~\ref{fig:gnutella05}(h), and the inset figure of Figure~\ref{fig:gnutella05}(d), all drawn on a log-log scale. We have also observed such a polynomial decaying behavior for the other choices of parameters. This behavior can be explained as follows.

Suppose that we have a trajectory of an ergodic Markov chain, say $\{X_t\}_{t \geq 0}$, with its stationary distribution $\ppi$. For a function $f: \N \!\to\! \R$, letting $S_t \!:=\! \sum_{k=1}^t f(X_k)$, $S_t/t$ converges to $\Ex_{\ppi}\{f\}$ almost surely as seen from (\ref{ergodic}) as $t$ grows. In addition, the standard Central Limit Theorem (CLT) for ergodic Markov chains says that for any bounded function $f$ and any initial distribution on $X_0$,
\begin{equation}\label{clt}
  \sqrt{t}\lt(S_t/t - \Ex_{\ppi}\{f\} \rt) \overset{d}{\longrightarrow} N(0, \sigma^2), ~~\text{as}~ t \to \infty,
\end{equation}
where $N(0, \sigma^2)$ is a Gaussian random variable with zero mean and variance $\sigma^2$. The (asymptotic) variance $\sigma^2$ is given by~\cite{Liu04,LeeSIGMETRICS12,BrunoSIGMETRICS16}
\begin{equation*}
  \sigma^2 = \lim_{t \to \infty} \frac{1}{t}\var\lt(S_t\rt) = \var_{\ppi}\{f\} + 2\sum_{k=1}^{\infty}\cov_{\ppi}\lt(f(X_0), f(X_k)\rt),
\end{equation*}
where the first term in RHS is the marginal variance of $f$ with respect to $\ppi$ and the second term in RHS is the lag-$k$ autocovariance of the stationary sequence $\{f(X_t)\}$. From (\ref{clt}), we see that the fluctuations of $S_t/t$ around the mean value $\Ex_{\ppi}\{f\}$ are roughly within the interval of length of order $\sigma/\sqrt{t}$. Even when $X_t$'s are $i.i.d.$ over $t$, each having distribution $\ppi$ (i.e., perfect independent samples all the time), the asymptotic order of
convergence from $S_t/t$ to $\Ex_{\pi}\{f\}$ still remains the same as $O(1/\sqrt{t})$, while $\sigma^2$ reduces to $\var_{\ppi}\{f\}$.

What we have in this paper is the convergence of $\hmmu_t \!\in\! \Omega$ to the target $\nnu \!\in\! \Omega$ almost surely and the typical size of the fluctuations of the difference $\hmmu_t - \nnu$, where $\Omega \!=\! \{ \nnu \in [0,1]^n |  \sum_{i=1}^n \nu(i) \!=\! 1 \}$ is the set of all probability distributions on $\N$. If the underlying process on $\N$ were to be a Markov chain, by resorting to similar CLT, we would obtain similar $O(1/\sqrt{t})$ convergence speed where the typical size of the fluctuations is now characterized by that of a $n$-dimensional multivariate Gaussian random vector with some $n \times n$ covariance matrix. However, the underlying process $Z_t$ in our framework is history-dependent and thus non-Markovian, so we would not expect anything faster than $O(1/\sqrt{t})$ for its error as seen from Markovian or $i.i.d.$ cases. Nonetheless, the figures drawn on a log-log scale in our setting demonstrate that the error decays still polynomially fast $O(t^{-\beta})$. These observations are further supported by the mathematical results in~\cite{Benaim15}, which apply to the static NMMC method. Specifically, Corollary 1.3 in~\cite{Benaim15} implies that if one considers a class of polynomial weight functions $w_k \!=\! k^\alpha$, $\alpha \!\geq\! 0$, then the difference $\hmmu_t - \nnu$ in the $l^1$ norm is bounded above by $C t^{-\theta}$ almost surely for some (random) constant $C \!>\! 0$, where $\theta \!\leq\! \frac{1}{2}$.\footnote{In other words, for every possible sample path $\omega$ of the underlying stochastic process $\{Z_t\}$ by the crawler, there exists some random constant $C(\omega)$ such that the difference $\hmmu_t - \nnu$ in the $l^1$ norm is bounded above by $C(\omega) t^{-\theta}$ almost surely. Note also that $\theta$ depends on the extremal value of the spectral functional, which requires the knowledge of the entire spectrum of the underlying graph structure. We refer to~\cite{Benaim15} for more details.} In addition, Theorem 1.4 in~\cite{Benaim15} shows that the CLT from $\hmmu_t$ to $\nnu$ holds with respect to a \emph{rescaled} time $\eta_t(t)^{-1} \!\sim\! t$, as it would hold for Markovian or $i.i.d.$ cases.

%---------------------------------------------------------------------
\begin{figure*}[t!]
    \centering
    \vspace{-0mm}
    \subfigure[$\ppi = \uu$; $p = 0.01$]{\includegraphics[width=0.31\textwidth]{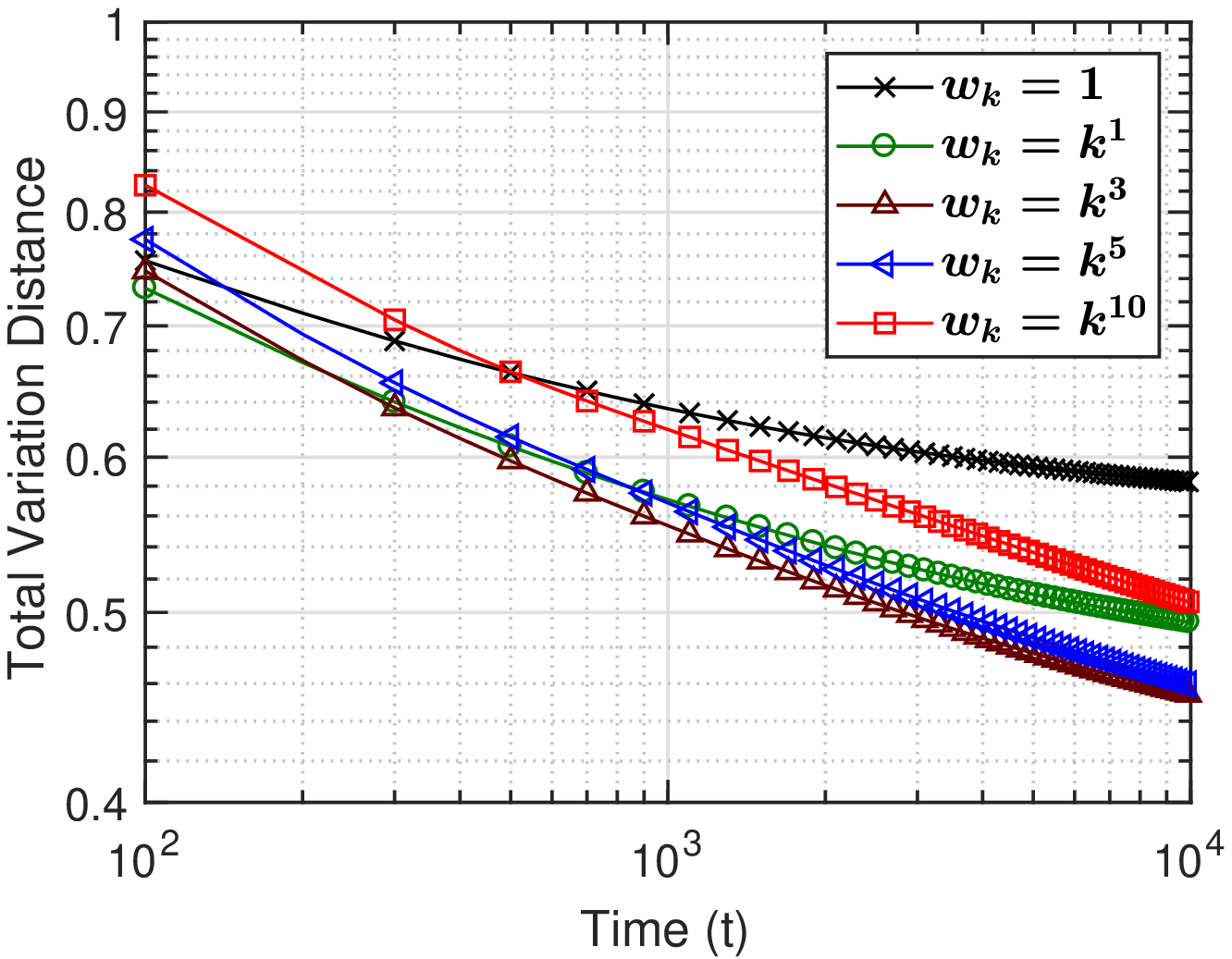}}
    \hspace{1mm}
    \subfigure[$\pi(i) \propto d^-_i$; $p = 0.01$]{\includegraphics[width=0.31\textwidth]{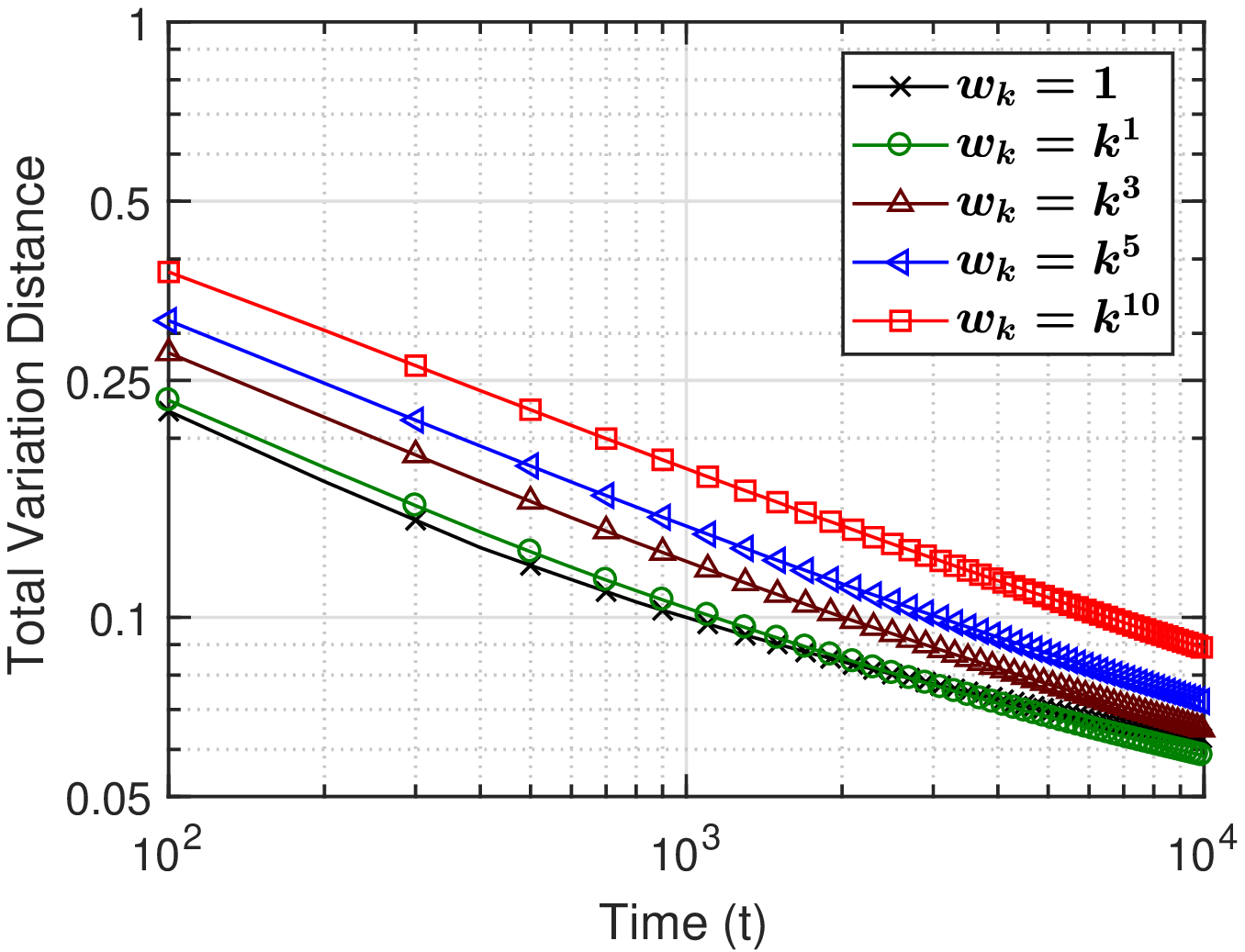}}
    \hspace{1mm}
    \subfigure[EVC $\xx$; $p = 0.01$]{\includegraphics[width=0.31\textwidth]{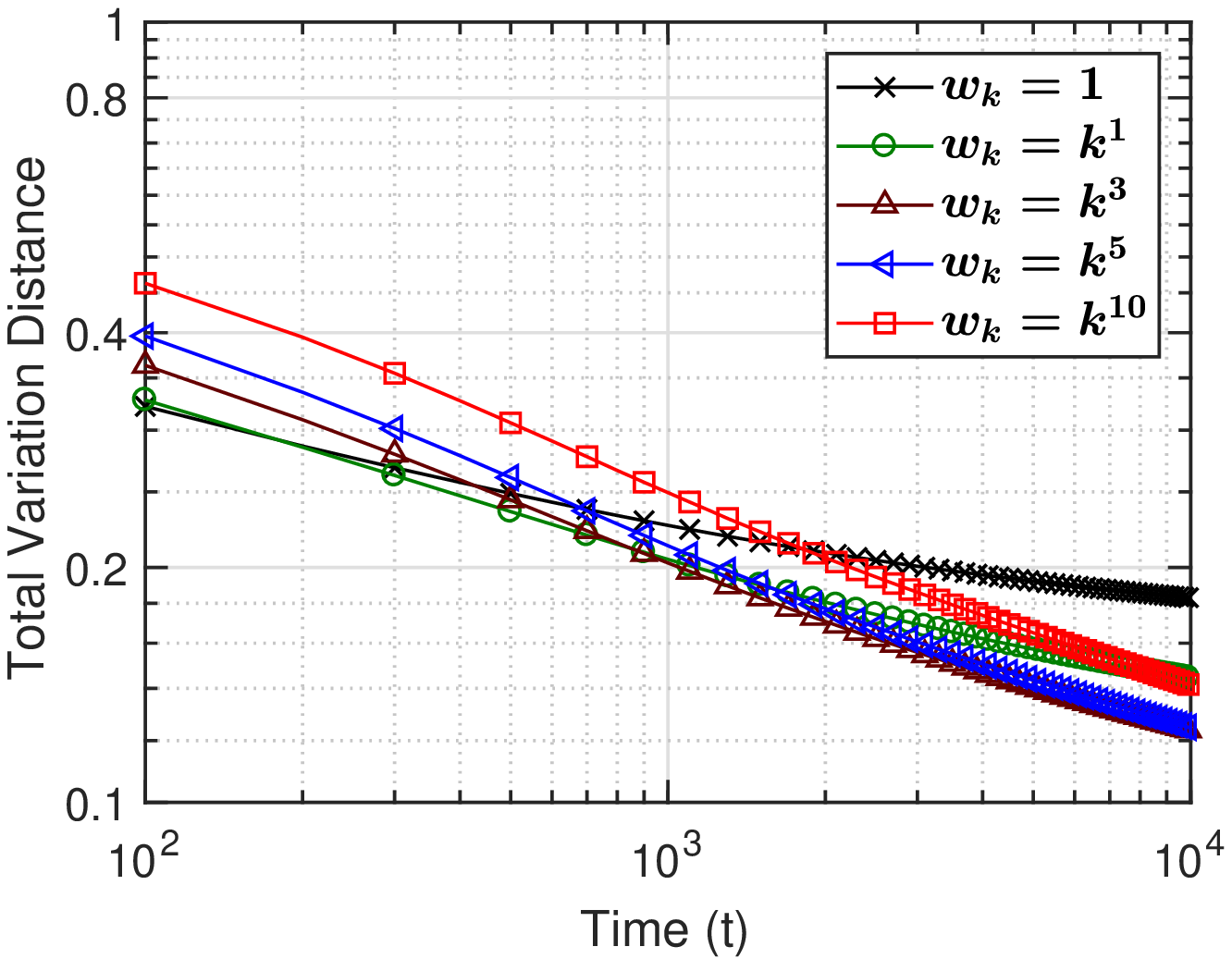}}
    \vspace{-3mm}
    \caption{The TVD results obtained under Slashdot graph.} \label{fig:slashdot}
    \vspace{-3mm}
\end{figure*}
%-----------------------------------------------------------------------
%---------------------------------------------------------------------
\begin{figure*}[t!]
    \centering
    \vspace{-0mm}
    \subfigure[$\ppi = \uu$; $p = 0.01$]{\includegraphics[width=0.31\textwidth]{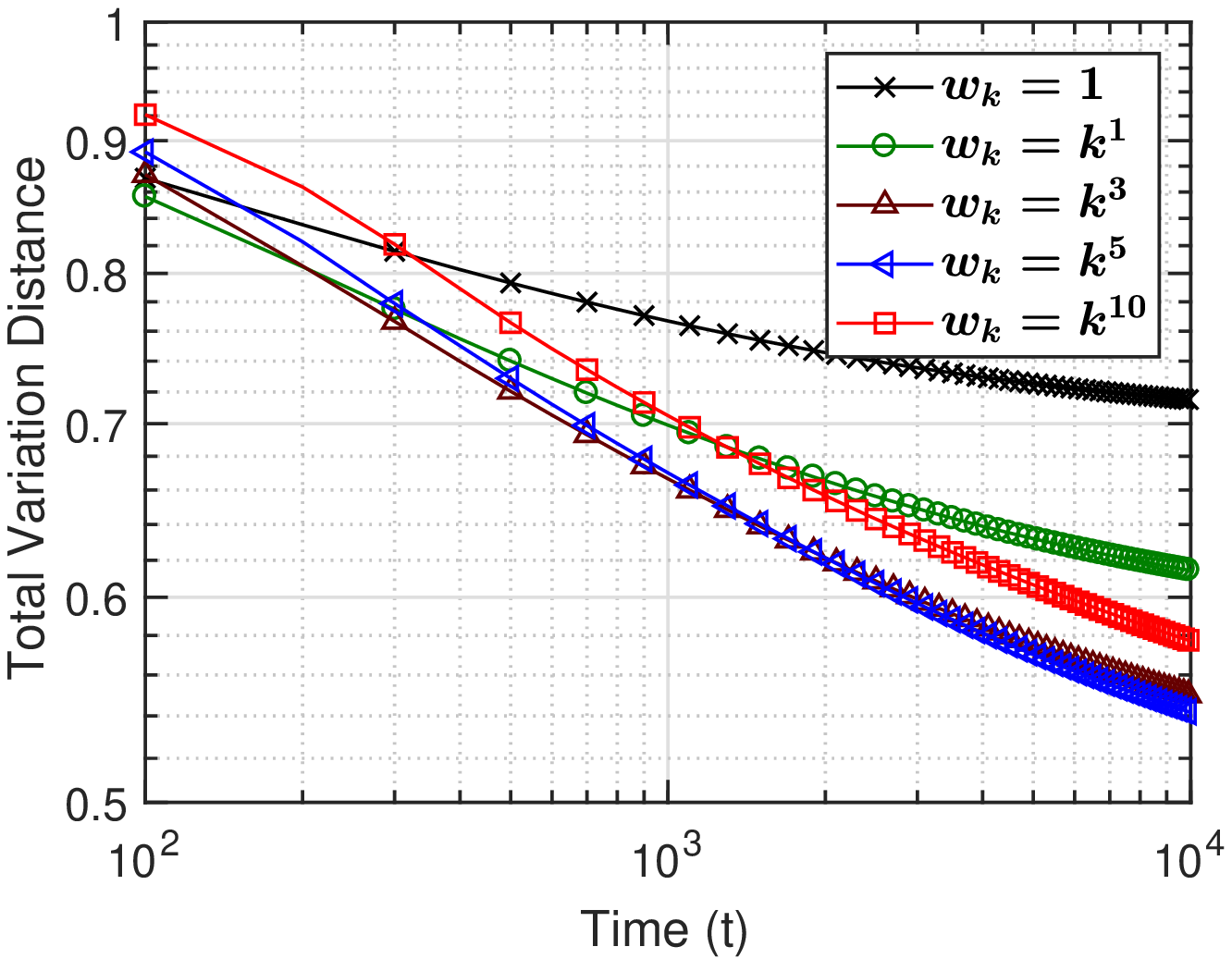}}
    \hspace{1mm}
    \subfigure[$\pi(i) \propto d^-_i$; $p = 0.01$]{\includegraphics[width=0.31\textwidth]{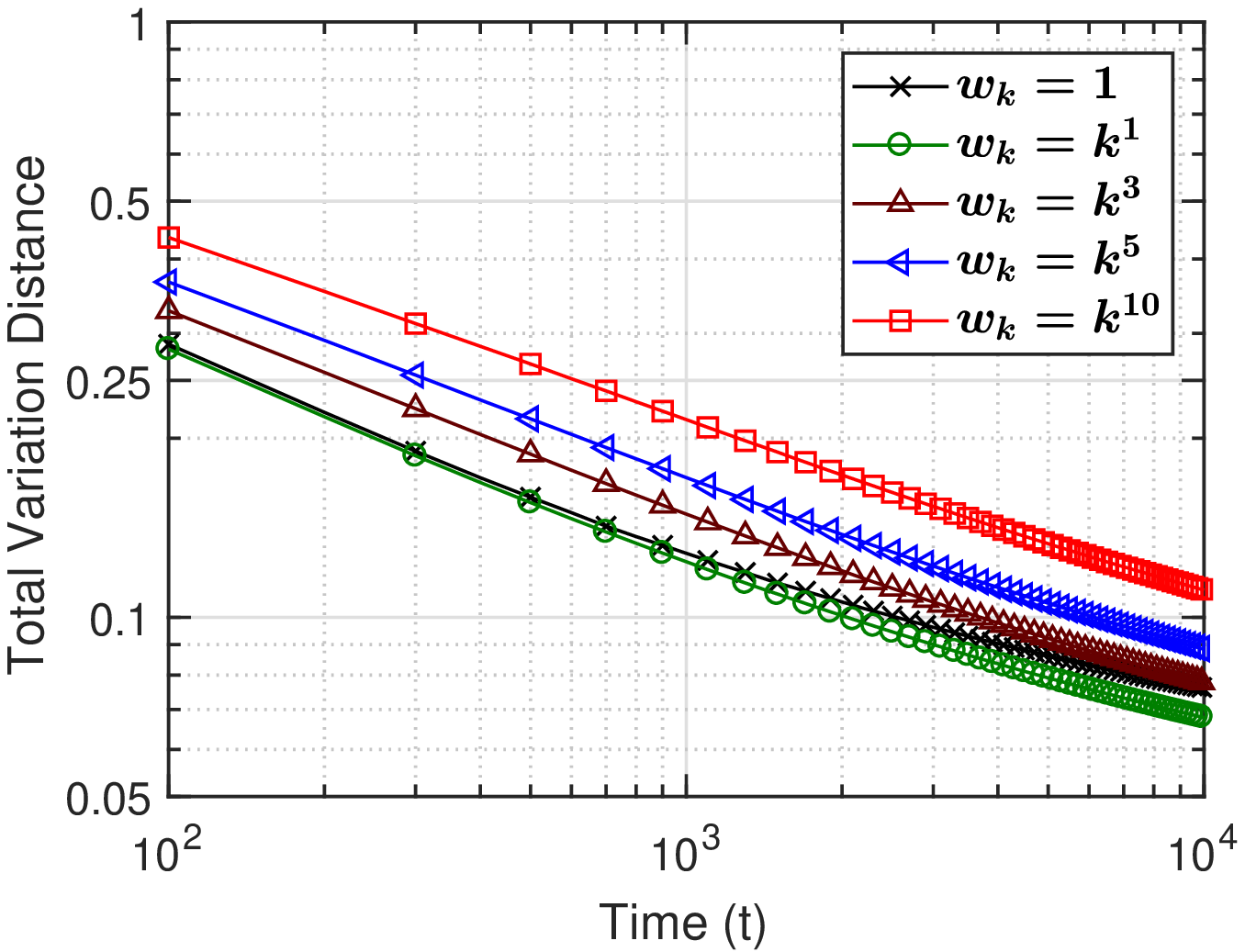}}
    \hspace{1mm}
    \subfigure[EVC $\xx$; $p = 0.01$]{\includegraphics[width=0.31\textwidth]{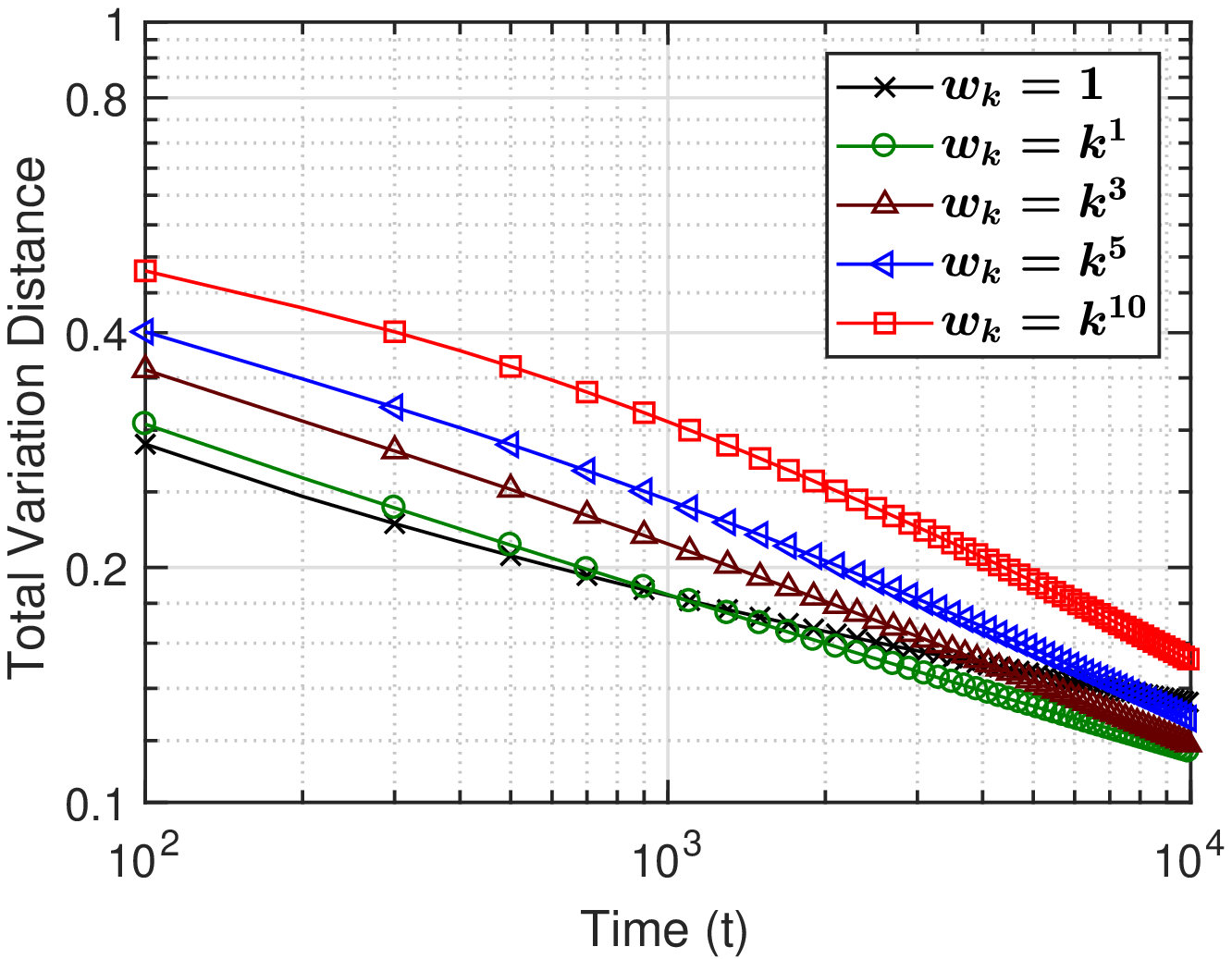}}
    \vspace{-3mm}
    \caption{The TVD results (on a log-log scale) obtained under Wiki-Talk graph.} \label{fig:wikitalk}
    \vspace{-3mm}
\end{figure*}
%-----------------------------------------------------------------------

We now turn our attention to the simulation results under Slashdot and Wiki-Talk graphs. As the graphs are much bigger than the Gnutella graph, we properly increase the number of agents from 100 to 2000. We observe the similar trends as in the Gnutella graph and only present the TVD results for the cases with $w_k \!=\! k^\alpha$ ($\alpha \!=\! 0, 1,3,5,10$) when using the dynamic NMMC method with $c_t$-updating probability $p \!=\! 0.01$ in Figures~\ref{fig:slashdot} and~\ref{fig:wikitalk}, which consistently outperforms the other choices of $p \!=\! 1, 0.1$ for all three cases of the target QSD. We still observe a polynomial decrease in the convergence speed of $\hmmu_t$, which is asymptotically linear on a log-log scale. We see that the case with $\alpha \!=\! 3$ or $\alpha \!=\! 5$ exhibits the fastest decaying rate in its TVD curve for achieving $\ppi \!=\! \uu$ under both Slashdot and Wiki-Talk graphs and for estimating the EVC $\xx$ under Slashdot graph. On the other hand, the case with $\alpha \!=\! 1$ becomes the best with the fastest decaying rate for the other cases. This again shows that to strike a balance between diffusion and redistribution, the extent of diffusion, which can be controlled by the choice of $\alpha$, should be properly chosen based on the underlying graph structure and the target QSD. We also note that the convergence becomes much faster when the target QSD is set to be the EVC $\xx$ of $\A$ or the in-degree distribution, i.e., $\pi(j) \!\propto\! d^-_j$, as compared to the case of the uniform distribution $\ppi \!=\! \uu$.

We finally report the TVD results of the dynamic NMMC method with $c_t$-updating probability $p \!=\! 0.01$ under Amazon graph for the cases with $w_k \!=\! k^\alpha$ ($\alpha \!=\! 0,1,3,5,10$) when achieving $\pi(i) \propto d^-_i$. Since the size of the LSCC of the Amazon graph is over $\times100$ bigger than that of the Gnutella graph, we use $10^4$ agents. We again observe the polynomial decay in the convergence speed of $\hmmu_t$. We also see that the case with $w_k\!=\! 1$ or $w_k \!=\! k$ leads to the fastest decaying rate. This suggests that too strong a bias in weights toward more recently-visited nodes can have an adversary effect, leading to an imbalance between diffusion and redistribution.

%---------------------------------------------------------------------
\begin{figure}[h!]
    \includegraphics[width=0.37\textwidth]{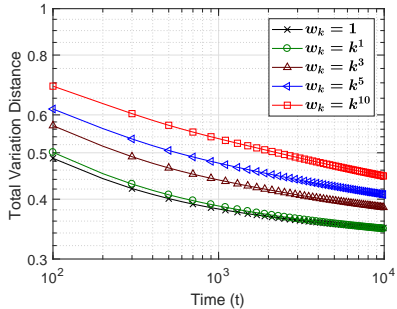}
    \vspace{-3mm}
    \caption{The TVD results (on a log-log scale) for achieving $\pi(i) \propto d^-_i$ with $p = 0.01$ under Amazon graph.}
    \label{fig:amazon}
    \vspace{-3mm}
\end{figure}
%-----------------------------------------------------------------------

\section{Discussion and Practical Considerations}\label{se:discuss}

In this section, we thoroughly discuss the comparison between our NMMC method and the relevant literature, ranging from the MCMC methods to the PageRank algorithm. We also address practical issues with our NMMC method and demonstrate that our NMMC method is readily applicable to most direct graphs in practice.

\subsection{MCMC vs. NMMC}

We first discuss the key differences between the popular MCMC method and our NMMC method for sampling from a given probability distribution $\ppi$ on a graph $\GG$. In essence, the MCMC method is about how to build a Markov chain $\p$ on a given state space (e.g., graph) such that it attains any desired stationary distribution $\ppi$, i.e., $\ppi = \ppi \p$ on the same space~\cite{Liu04,Richey10}.
The Metropolis-Hastings (MH) algorithm, undeniably the most famous one in the MCMC literature, can be described as follows. For a given target distribution $\ppi$ to achieve on a graph, consider an underlying Markov chain $X_t$ on this graph with its transition matrix $\q = [Q_{ij}]$. At the current state $X_t=i$, the chain proposes to move to $X_{t+1}=j$ with probability $Q_{ij}$, which is then accepted with probability
\[
\gamma_{ij} = \min\lt\{1, \frac{\pi(j) Q_{ji}}{\pi(i) Q_{ij}}\rt\},
\]
and rejected with probability $1-\gamma_{ij}$ in which case the chain stays at the same state, i.e., $X_{t+1} = i$. Clearly, it then immediately follows that
\begin{equation}
\pi(i) \gamma_{ij}Q_{ij} = \pi(j) \gamma_{ji}Q_{ji},
\label{MH-rev-assump}
\end{equation}
for all $i,j\in\N$, i.e., the resulting stochastic matrix $\p = [P_{ij}]$ with $P_{ij} = \gamma_{ij}Q_{ij}$ for $i\neq j$ and $P_{ii} = 1-\sum_{k\ne i}P_{ik}$ becomes reversible with respect to $\ppi$, thus satisfying the original equation $\ppi = \ppi \p$. This method has proven to be extremely versatile and powerful, using only local information to find $\gamma_{ij}$ with any transition matrix $Q_{ij}$ on the given state space.
From an algebraic point of view, this amounts to constructing a reversible \emph{stochastic} matrix $\p$ such that its leading left-eigenvector matches a given $\ppi$. As mentioned in Section~\ref{se:intro}, again, the most important yet implicit assumption in (\ref{MH-rev-assump}) is that $Q_{ji}>0$ whenever $Q_{ij}>0$, i.e., nodes $i$ and $j$ have reciprocal relationship. If not, i.e., there exists a directed edge from $i$ to $j$ but not vice versa, we would have $Q_{ji}=0$ even if $Q_{ij}>0$, for which the MH algorithm no longer applies. What we have established in this paper is to show how we can remove the requirements of (i) being stochastic and (ii) reciprocity in the underlying process.

%---------------------------------------------------------------------
\begin{figure*}[t!]
    \centering
    \vspace{-0mm}
    \subfigure[TVD; MH]{\includegraphics[width=0.31\textwidth]{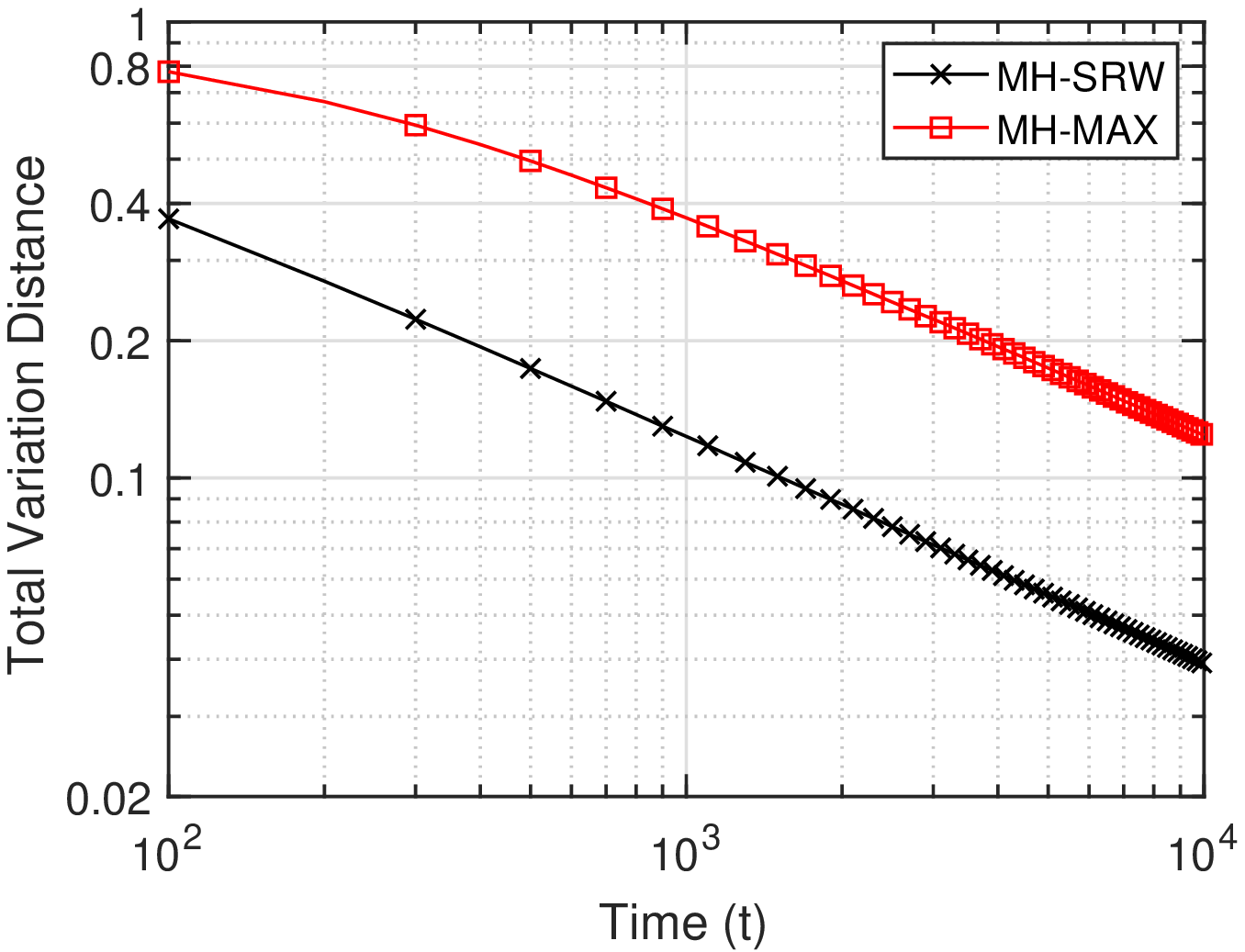}}
    \hspace{6mm}
    \subfigure[TVD; NMMC]{\includegraphics[width=0.31\textwidth]{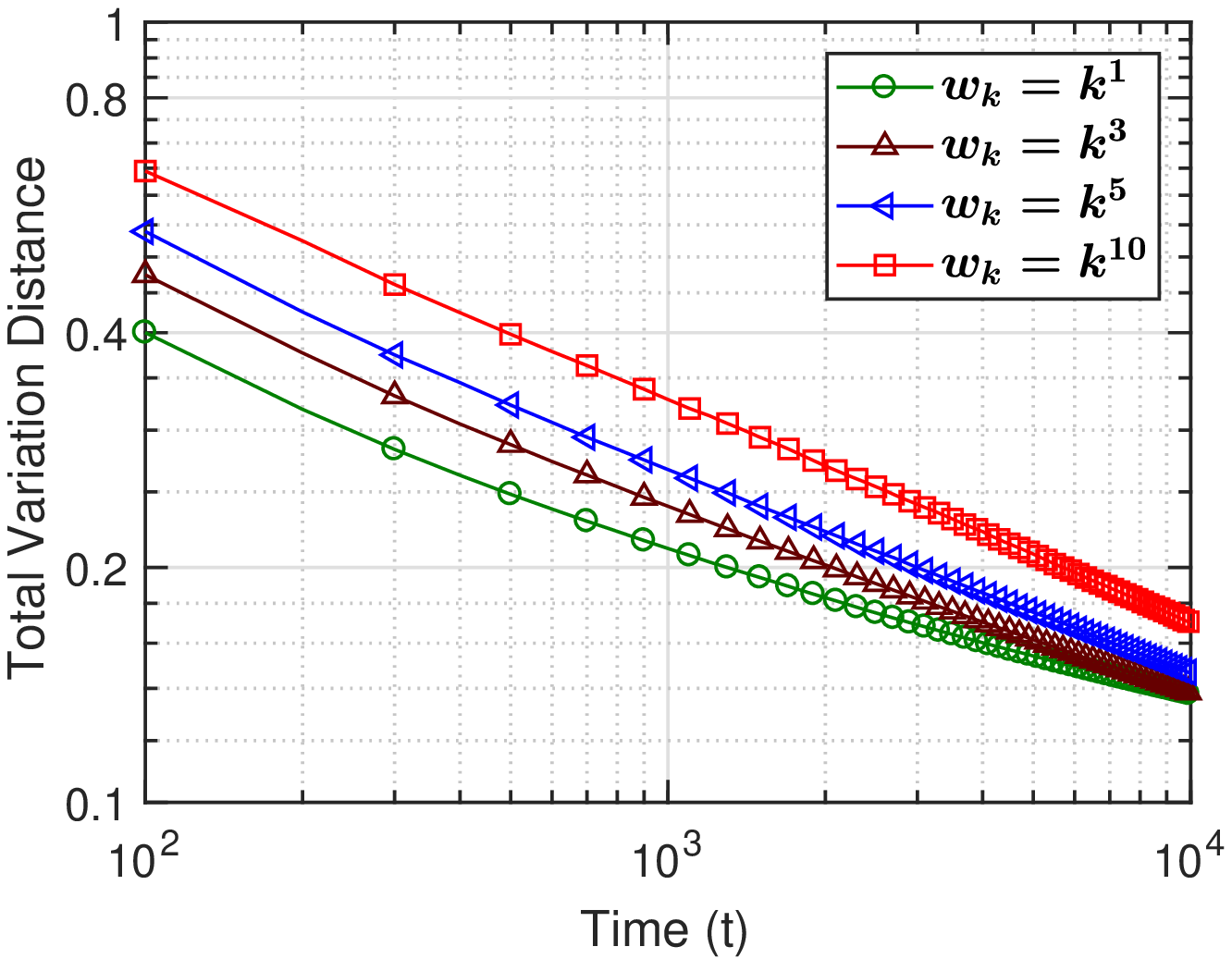}}
    \\\vspace{-1mm}
    \subfigure[NRMSE; MH]{\includegraphics[width=0.31\textwidth]{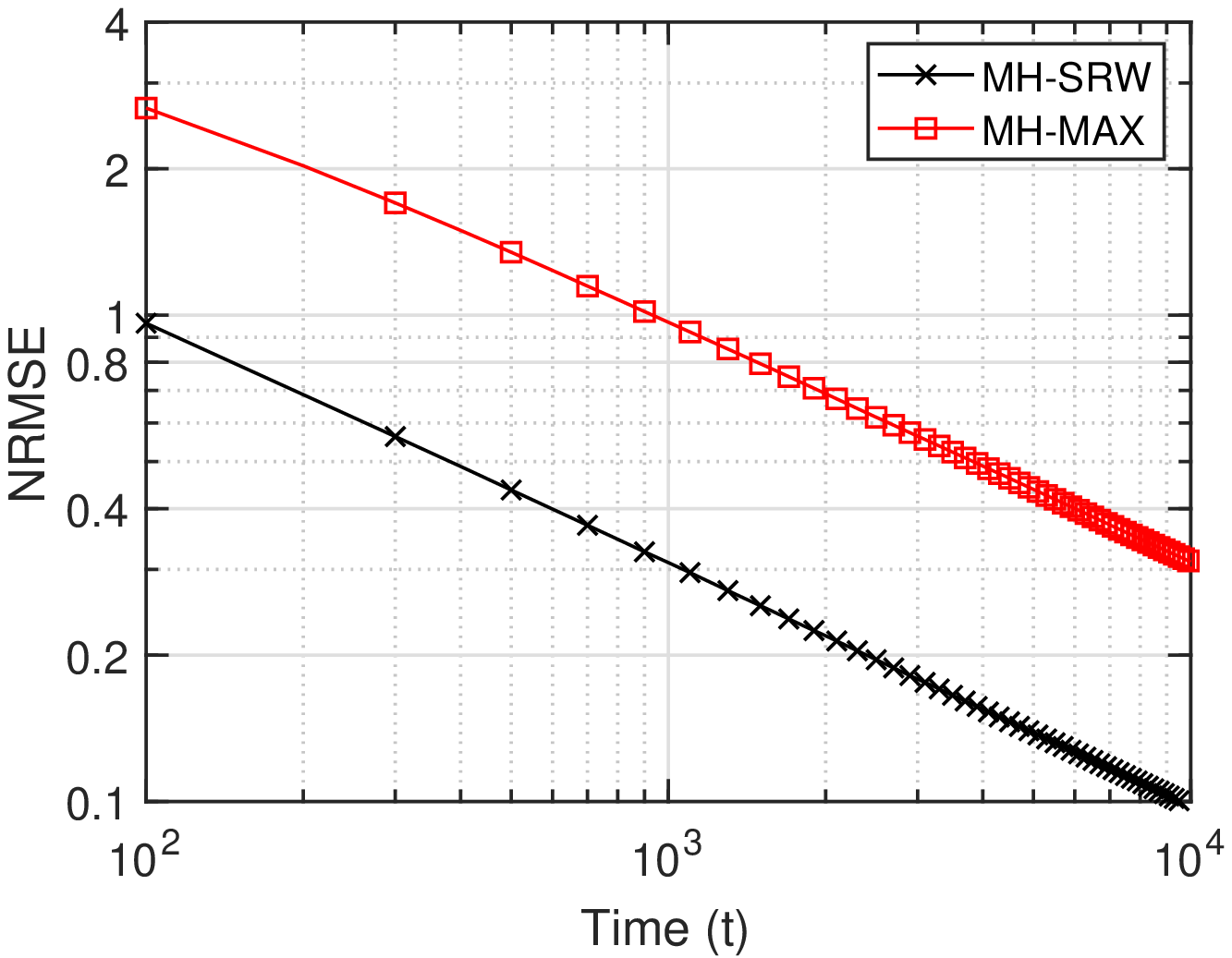}}
    \hspace{6mm}
    \subfigure[NRMSE; NMMC]{\includegraphics[width=0.31\textwidth]{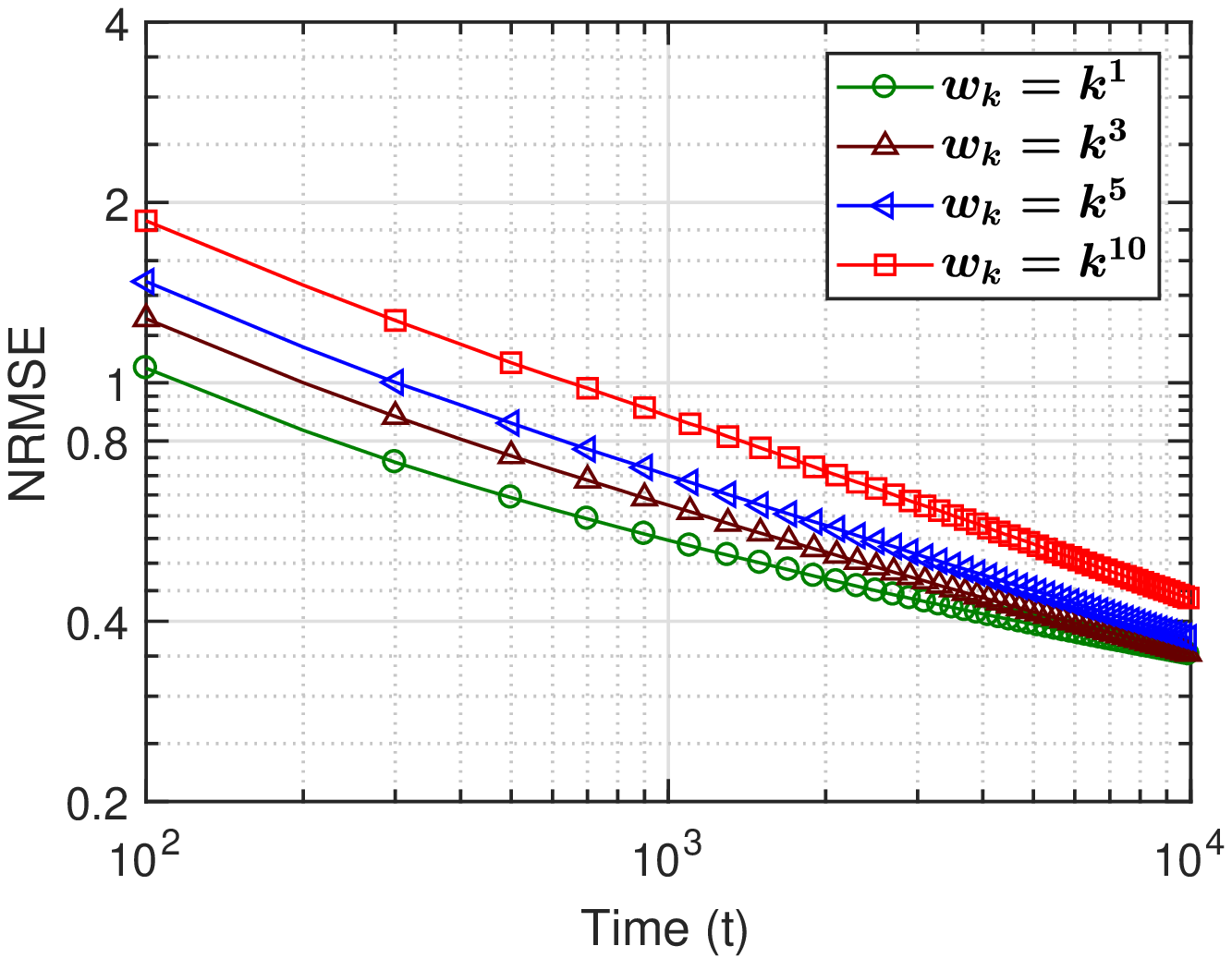}}
    \vspace{-3mm}
    \caption{The TVD and NRMSE results (on a log-log scale) of (a) MH-SRW and MH-MAX and (b) NMMC method with $w_k \!=\! k^\alpha$ ($\alpha \!=\! 3,5,10$) and $p \!=\! 0.01$ when applied to an \emph{`undirectified'} version of Gnutella graph.}\label{fig:mhrw}
    \vspace{-3mm}
\end{figure*}
%-----------------------------------------------------------------------

One may ask what if our NMMC method is applied to an `undirected' graph and/or how it would be compared to the MH algorithm under the undirected graph for graph sampling. We again emphasize that our NMMC method is the first of its kind, applicable to directed graphs or in general when the proposed move is non-reciprocal. Our NMMC method is not meant to be compared with the MH algorithm, as the MH cannot even be applied to directed graphs and our algorithm is not an improvement over the MH algorithm.

Nonetheless, for the sake of completeness, we provide simulation results of the MH algorithm and our NMMC method when applied to an undirected graph. To this end, we use the undirected version of the Gnutella graph (its strongly connected component). For the MH algorithm, we consider the following two popular random-walk versions, since it still depends on the choice of $\q$. One is to use $Q_{ij} \!=\! 1/d_{\max}$ if $(i,j) \!\in\! \E$ and $Q_{ij} \!=\! 0$ if $(i,j) \!\not\in\! \E$ and $i \!\ne\! j$, with $Q_{ii} \!=\! 1 - d_i/d_{\max}$, where $d_i$ is the degree of node $i$ and $d_{\max}$ is the maximum degree of the undirected graph~\cite{VLDB2000,Boyd04-SIAM}. We call this version MH-MAX. The other is to use the simple random walk for $\q$, where $Q_{ij} \!=\! 1/d_i$ if $(i,j) \in \E$ and $Q_{ij} \!=\! 0$, otherwise~\cite{WillingerToN09,GjokaJSAC11,LeeSIGMETRICS12}. We call this version MH-SRW. Note that the transition matrix of the simple random walk is still used as the proposed chain $\q$ under our NMMC method, but the resulting acceptance probability in (\ref{eq:gamma-SRW}) is now given by $\gamma_{ij} \propto \frac{\pi(j)}{\pi(i)}\frac{d_i}{d_j}$, since the underlying graph is \emph{undirected}.

We here consider $\ppi \!=\! \uu$ and use 100 random walks. Letting $\{X_t\}$ be the trajectory by the MH-MAX (or MH-SRW), we consider $\hat{x}(i) \!:=\! \frac{1}{t}\sum_{k=1}^t\idc_{\{X_k = i\}}$ to estimate $\pi(i) \!=\! 1/n$. For performance metrics, we use the TVD and also the normalized root mean square error (NRMSE), which is defined as $\sqrt{\Ex\{(\hat{x}(i) \!-\! 1/n)^2\}}/(1/n)$ for each $i$. We report the average of the NRMSE over all $i \!\in\! \N$. Figure~\ref{fig:mhrw} shows the TVD and NRMSE results (on a log-log scale) of MH-MAX, MH-SRW, and our NMMC method with $w_k \!=\! k^\alpha$ ($\alpha \!=\! 1,3,5,10$) and $p \!=\! 0.01$. We see that all the TVD and NRMSE values decay to zero polynomially fast as we discussed in Section~\ref{se:simu}.

\subsection{PageRank vs. NMMC}\label{subse:page-rank}

Our NMMC method may be reminiscent of the famous PageRank algorithm and its associated `random surfer' model due to the seemingly common nature of following successive outgoing links at random and occasionally performing a random jump to a node in the graph. They are, however, fundamentally different, as explained below.

The random surfer model was introduced in the seminal work by Brin and Page~\cite{PageBrin98,PageBrin99} in order to \emph{interpret} the PageRank vector as the stationary distribution, say $\bm{\pi^*}$, of an `imaginary' random walk (or random surfer) on a Web graph. Namely, a random surfer keeps clicking successive outgoing links uniformly at random and occasionally gets bored of following links then jumps or teleports to a random page (by entering a new destination in the browser's URL line). In the long run, the proportion of the time the random surfer spends on a given page becomes the PageRank score of the page. We again emphasize that the random surfer model was just a random-walk ``interpretation" of the PageRank algorithm, \emph{not} an actual algorithmic operation implementable by actual crawlers on the graph. The rationale behind such a random-walk interpretation is to enable one to resort to the results of random walks on a graph, or the theory of Markov chains, in order to understand the properties of the PageRank algorithm such as convergence properties as in the seminal work~\cite{PageBrin98,PageBrin99} and follow-up studies~\cite{Haveliwala03,Carl2006}.

However, we are not concerned with speeding up the PageRank algorithm nor devising an efficient numerical method for computing $\bm{\pi^*}$. We are rather focused on a somewhat reverse problem, which is to obtain samples from a desired probability distribution $\bm{\pi}$ over a directed graph without requiring the global knowledge of the graph structure. It is also worth noting that unlike the operation of a random surfer/walk in the random surfer model that is to teleport to \emph{any} random node (even including not-yet-visited ones), our NMMC method is only limited to making a random jump (as a process of redistribution) to one of \emph{already-visited} nodes, which can be readily implemented in practice without acquiring the global knowledge. We do not assume a full access to the state space, which would be necessary to realize a teleportation in the random surfer model if one wishes to jump to any arbitrary page (associated with its own URL) in a Web graph, and would be equivalent to assuming that one can draw $i.i.d.$ samples directly from the space.

\subsection{Online In-degree Estimation}\label{subse:online}

We validate the feasibility of our NMMC method with an online estimate of the in-degree of each node, which is possibly latent information. As explained in~\ref{subse:qsd-mapping}, the in-degree information is often readily available as part of a user profile in online social networks such as Google++, Twitter, and Instagram. However, in other types of directed graphs like Web graphs, the in-degree information may not be available but needs to be discovered. To make our NMMC method applicable to the latter, we use a simple way of estimating the in-degree of node $i$. Each crawling agent, in the stage of proposing a movement from $i$ to $j$, can discover the presence of an incoming edge from $i$ into $j$. Whenever a new incoming edge is discovered, the agent increments the value of the in-degree of $j$, say $\hat{d}^-_j$, with $\hat{d}^-_j \!:=\! 1$ initially. All the agents share the estimate of the in-degree of each node. We report the resulting performance of our NMMC method in Figure~\ref{fig:indegree} and observe that our NMMC method performs well even with the simple online in-degree estimation. This manifests the robustness and versatility of our NMMC method in practice.

%---------------------------------------------------------------------
\begin{figure}[t!]
    \centering
    \vspace{-0mm}
    \subfigure[$\ppi = \uu$; $p = 0.1$]{\includegraphics[width=0.35\textwidth]{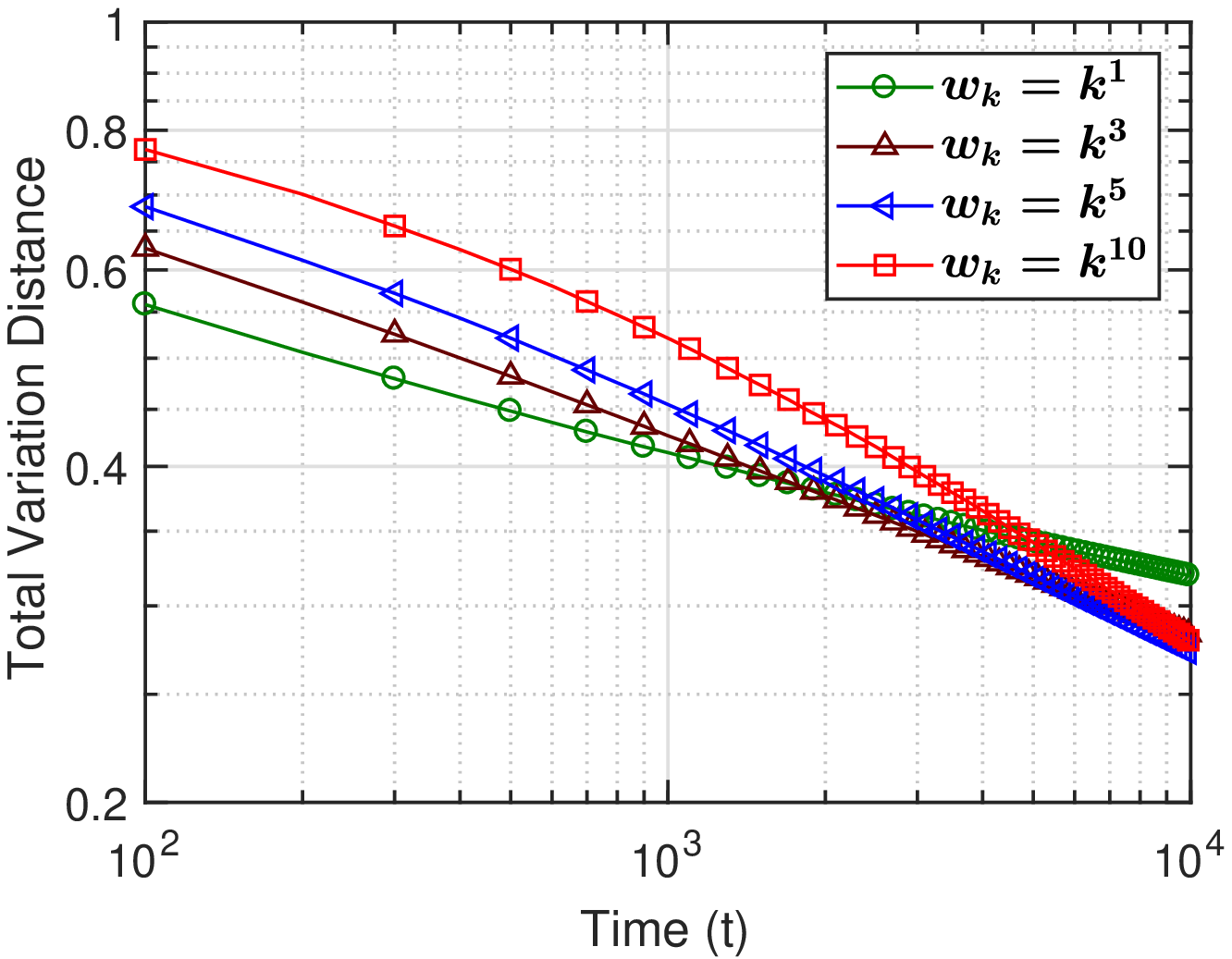}}
    \hspace{6mm}\subfigure[$\ppi = \uu$; $p = 0.01$]{\includegraphics[width=0.35\textwidth]{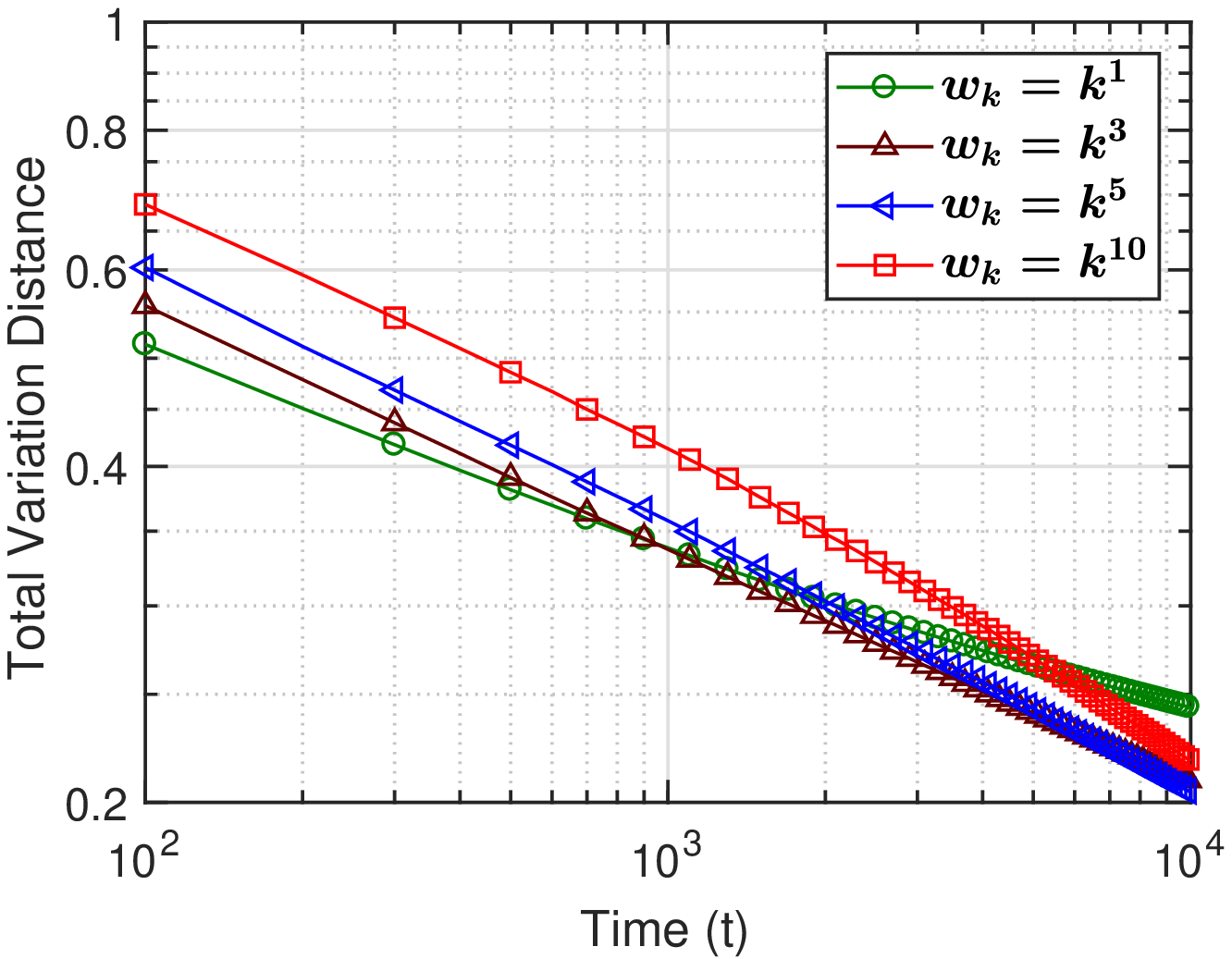}}
    \vspace{-4mm}
    \caption{The TVD results (on a log-log scale) obtained under Gnutella graph for the NMMC method with an online estimate of the in-degree of each node $i \!\in\! \N$.} \label{fig:indegree}
    \vspace{-3mm}
\end{figure}
%-----------------------------------------------------------------------

\subsection{Going Beyond Strongly Connected Directed Graphs}\label{subse:nonscc}

We have assumed that the underlying graph $\GG$ is strongly connected so far. While this assumption has been made mainly for ease of exposition, it may not hold in practice. We thus discuss how we can relax this assumption so that our NMMC method becomes ready for practical use. We first note that the key assumption for the NMMC method is essentially the irreducibility of the `proposed' Markov chain $\q$ on transient states $\N$, as explained in Section~\ref{se:qsd-mapping}. This may not be equivalent to assuming that the underlying directed graph be strongly connected for sampling from an arbitrary probability distribution $\ppi$. It is worth noting that for the estimation of the EVC $\xx$ of the directed graph, the underlying graph needs to be strongly connected by definition, or to properly define the eigenvector centrality as a valid measure~\cite{Carl2000}. In this case, the assumption of strongly connected graphs is not a limitation at all. Thus, we below focus only on achieving $\ppi$.

If the directed graph $\GG$ is not strongly connected, the support of $\ppi$ would need to be on the set of all the nodes, say $\N_S$, that are ``reachable" by traversing directional edges when starting from an initial node, or a set of initial nodes, say $S$, that are reachable among themselves. Note that the ``unreachable" nodes $\N \setminus \N_S$ are simply invisible and can safely be ignored. The technical changes are then how to deal with (i) `dangling' nodes, which are the nodes with no forward links, and (ii) a set of nodes that are reachable but without outgoing edges to the other nodes in $\N_S$. Note that these two problems were resolved in the PageRank algorithm by adopting the teleportation to any node in the graph, yet to numerically compute the (unknown) PageRank scores. While such global teleportation is infeasible for sampling in practice, its \emph{personalized} PageRank version is practically implementable and can be adopted into our NMMC method.\footnote{The personalized PageRank scores are still unknown and need to be computed by the PageRank algorithm (or the power method), as long as the PageRank scores themselves are concerned.}

Fix the set of initial nodes $S$. The state space $\N$ then reduces to $\N_S$. For the proposed chain $\q$ on $\N_S$, we consider the following transition matrix: For $i,j \in \N_S$,
\begin{equation}
Q_{ij}  = (1-p)\frac{1}{|S|} + p\frac{1}{|S|} \idc_{\lt\{d_i^+ =0\rt\}}  + p \frac{A_{ij}}{d_i^+} \idc_{\lt\{d_i^+>0\rt\}}, \label{ppr}
\end{equation}
for some constant $p \!\in\! (0,1)$. At each node $i$, with probability $p$, the agent follows one of its outgoing edges (if exists) uniformly at random, and jumps to one of the nodes in $S$ uniformly with probability $1-p$. If node $i$ is a dangling node with no outgoing edges ($d_i^+ \!=\! 0$), then the agent always jumps to a random node in $S$. It is not difficult to see the irreducibility of $\q$ on $\N_S$ for our NMMC method. Then, the acceptance probability $\gamma_{ij}$ in (\ref{r-qij}) can properly be obtained. In Figure~\ref{fig:nonscc}, we present the TVD results for estimating the uniform $\uu$ on $\N_S$ using the dynamic NMMC method with $\q$ in (\ref{ppr}), where $\N_S$ is the set of nodes that are reachable from any node in the LSCC of Gnutella graph and $|\N_S| \!=\! 8,566$. We here use 300 independent crawling agents starting from 300 fixed, distinct nodes $S$ and $p \!=\! 0.95$. Figure~\ref{fig:nonscc} demonstrates the feasibility of our NMMC method even when the underlying graph is not strongly connected.

%---------------------------------------------------------------------
\begin{figure}[t!]
    \centering
    \vspace{-0mm}
    \subfigure[$\ppi = \uu$; $p = 0.1$]{\includegraphics[width=0.35\textwidth]{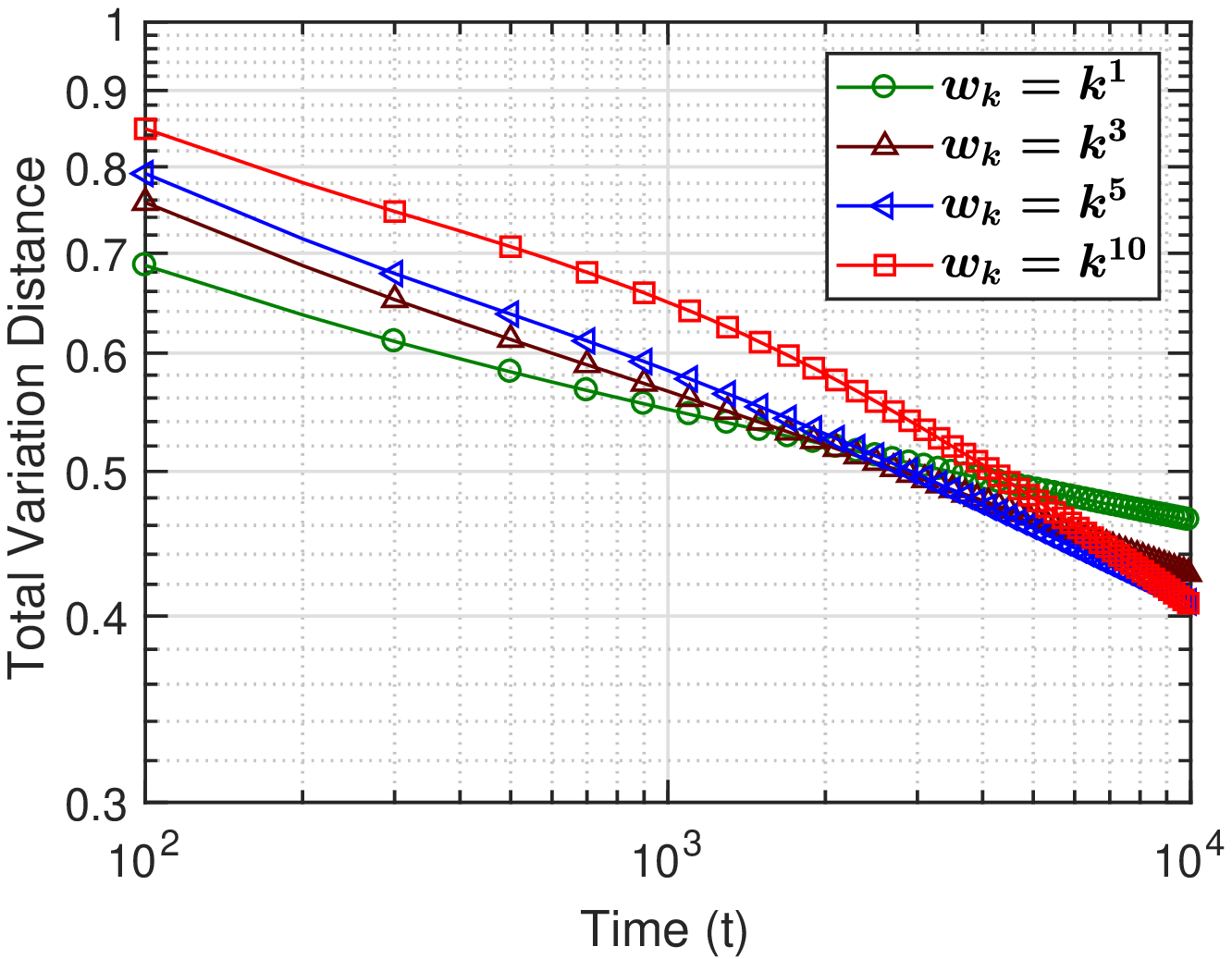}}
    \hspace{6mm}\subfigure[$\ppi = \uu$; $p = 0.01$]{\includegraphics[width=0.35\textwidth]{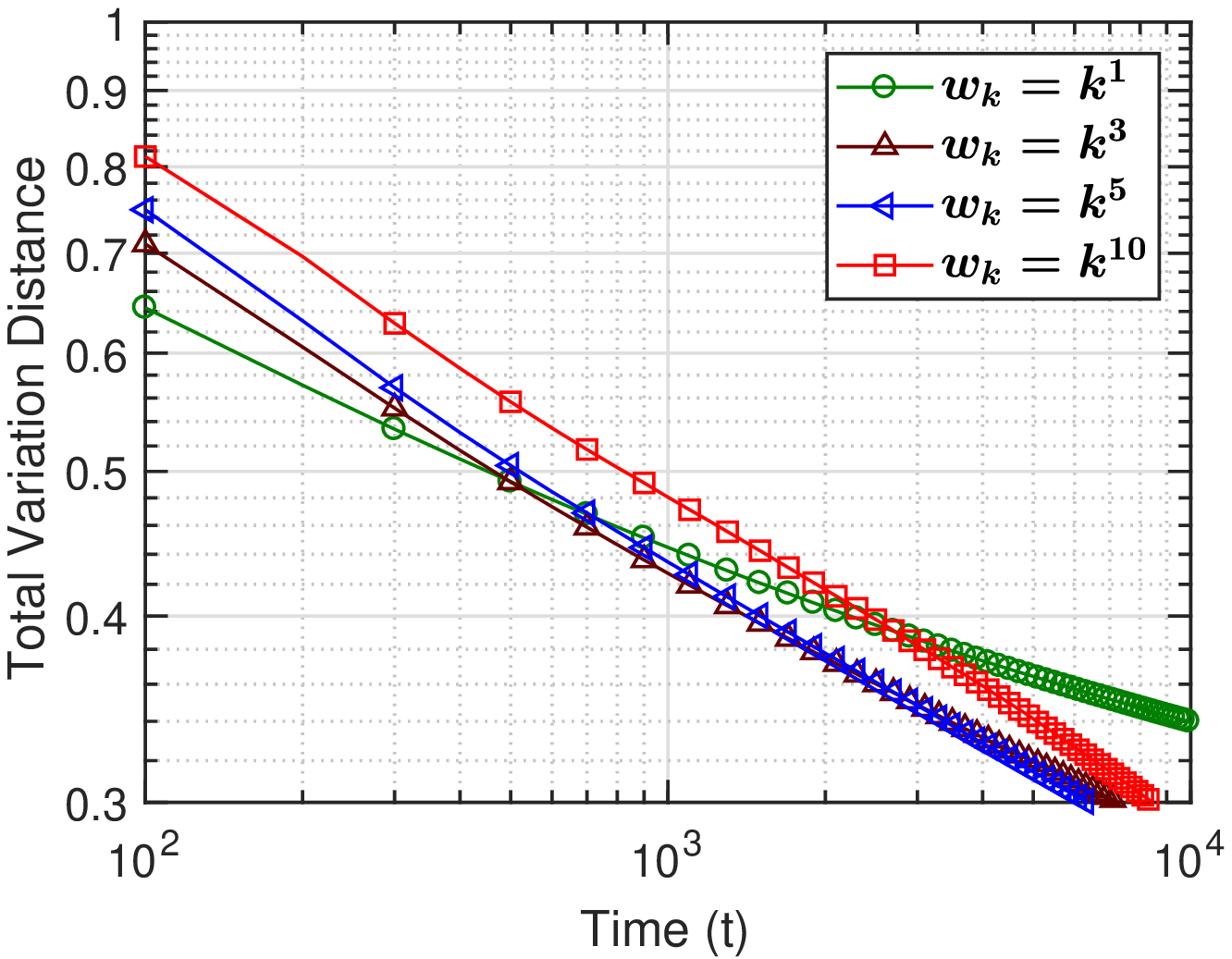}}
    \vspace{-4mm}
    \caption{The TVD results (on a log-log scale) of our NMMC method on the set of nodes $\N_S$, which are reachable from any node in the LSCC of Gnutella graph.} \label{fig:nonscc}
    \vspace{-3mm}
\end{figure}
%-----------------------------------------------------------------------

\subsection{Query Cost vs. Performance for Graph Sampling}\label{subse:query-cost}

So far, all the simulation results have been shown with respect to the number of time steps, since they have the primary purpose of numerically validating the convergence of our NMMC method and evaluating its convergence speed. Recall that the ergodic theorem in (\ref{ergodic}) and the CLT in (\ref{clt}) hold for ergodic Markov chains with respect to the number of time steps. Nonetheless, when it comes to graph sampling via crawling, `query cost' becomes practically important. In general, crawling real-world networks is done through public yet restrictive API interfaces provided by the networks or their Web interfaces (by following URLs and hyperlink structures). Such external access is often rate-limited due to security concerns, privacy protection, infrastructure burden and commercial purposes~\cite{Vesdapunt-CIKM16,Xin-Eun-Infocom17}. For example, Twitter allows only 15 API requests to retrieve IDs of a user's followers or IDs of the friends that the specific user is following every 15 minutes~\cite{twitter_rate}.

From a practical point of view, it would be informative to see the performance of our NMMC method with respect to the query cost in terms of the API calls (or HTTP requests) required. We below present the performance of NMMC method with respect to the query cost under Gnutella and Slashdot graphs. To this end, we adopt the definition of the query cost commonly used in the literature~\cite{GjokaJSAC11,Bruno-Infocom12,History-vldb15,Rewiring-database16}, which is the number of \emph{unique} queries required. In other words, API calls are spent only when visiting a node (or querying its page/profile) \emph{for the first time}, and revisiting the same node since then  (duplicate queries) can be done immediately by retrieving the nodal information from local cache without consuming the query rate limit. This indicates that the redistribution mechanism of our NMMC method is achieved at no additional cost as long as query cost is concerned. Thus, the `due' redistributions are only advantageous to shape up the historical empirical distribution $\hmmu_t$ toward the target QSD. The local cache can also be easily shared among multiple crawling agents.

%---------------------------------------------------------------------
\begin{figure*}[t!]
    \centering
    \vspace{-0mm}
    \subfigure[$\ppi = \uu$; $p = 0.01$]{\includegraphics[width=0.31\textwidth]{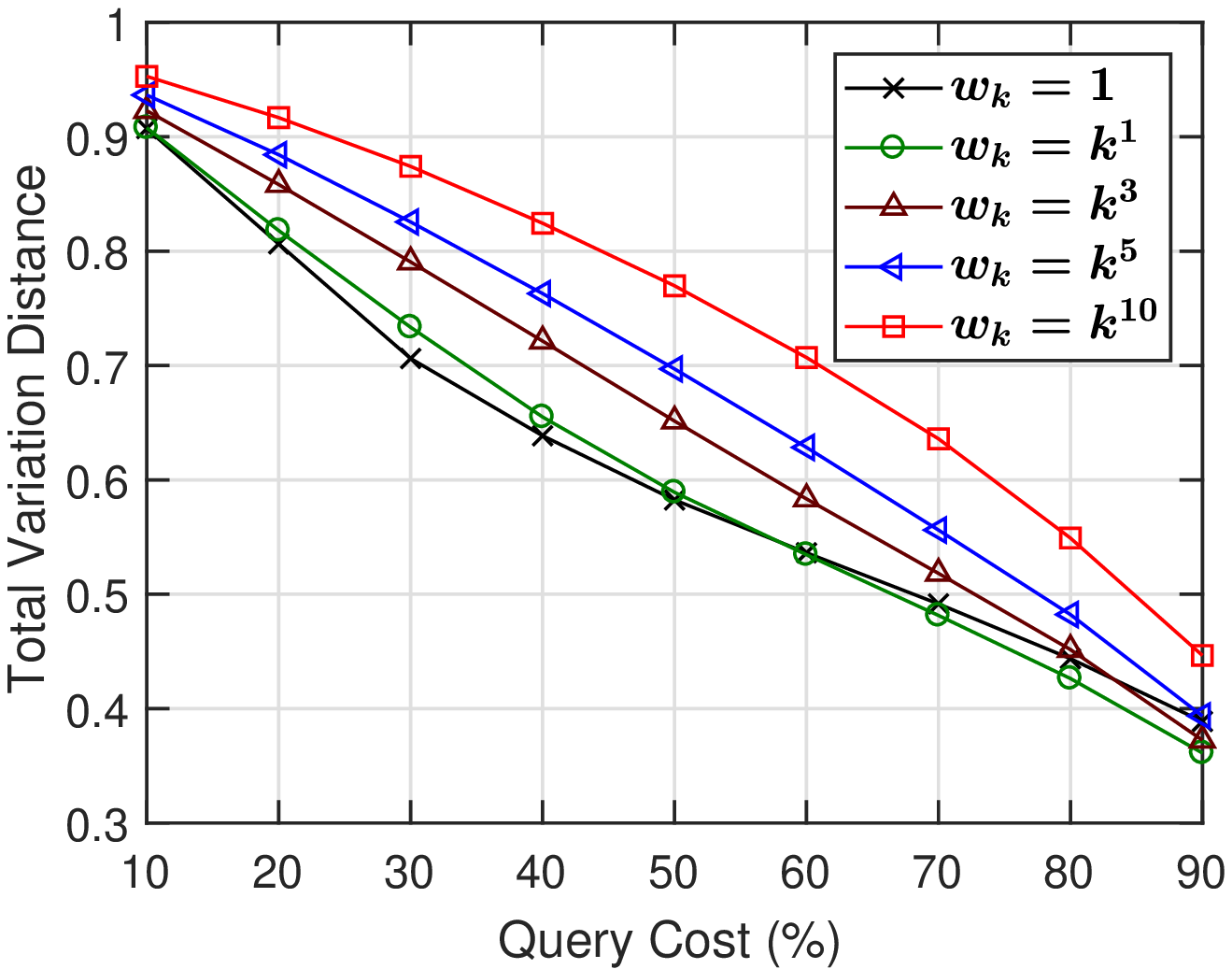}}
    \hspace{1mm}
    \subfigure[$\pi(i) \propto d^-_i$; $p = 0.01$]{\includegraphics[width=0.31\textwidth]{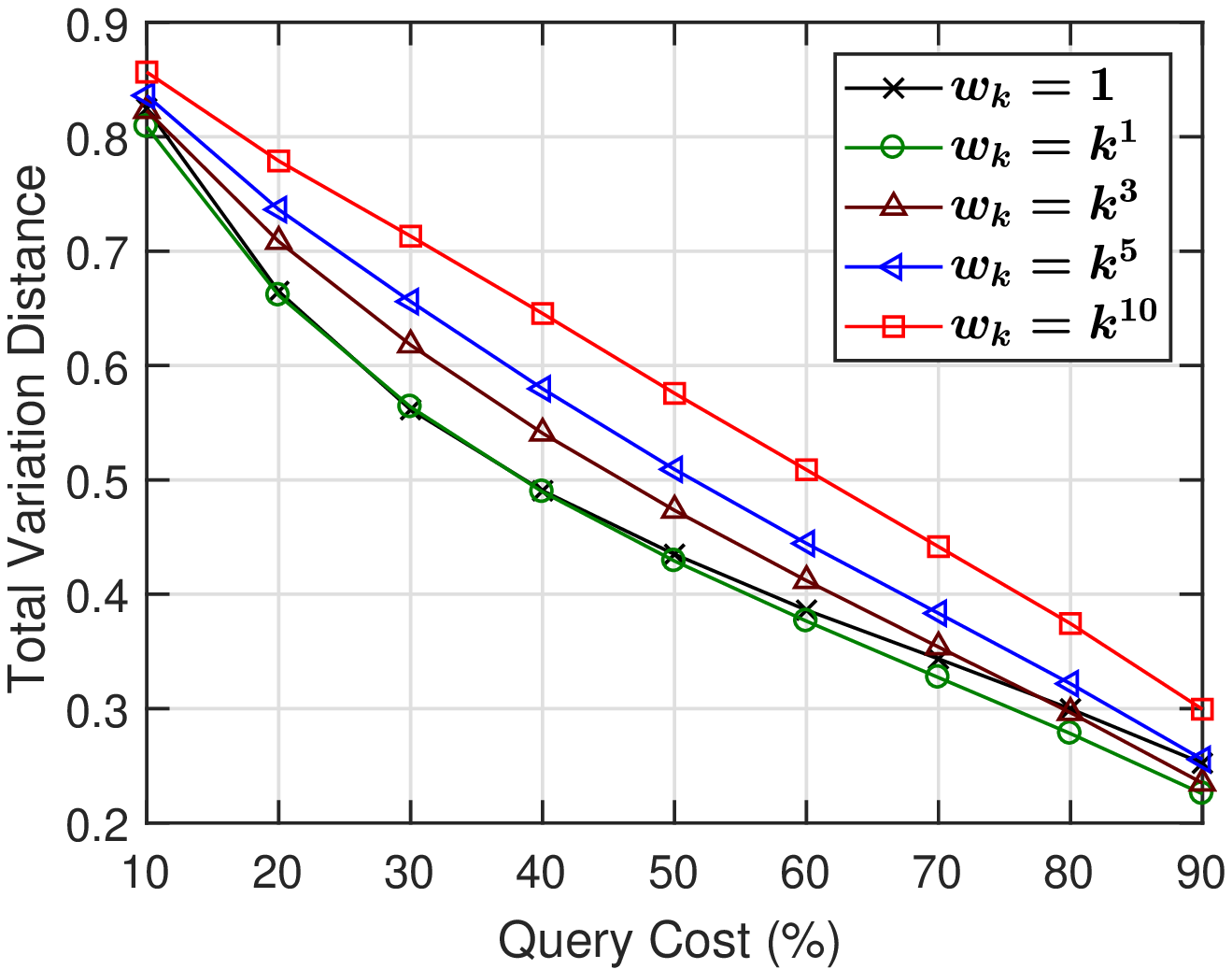}}
    \hspace{1mm}
    \subfigure[EVC $\xx$; $p = 0.01$]{\includegraphics[width=0.31\textwidth]{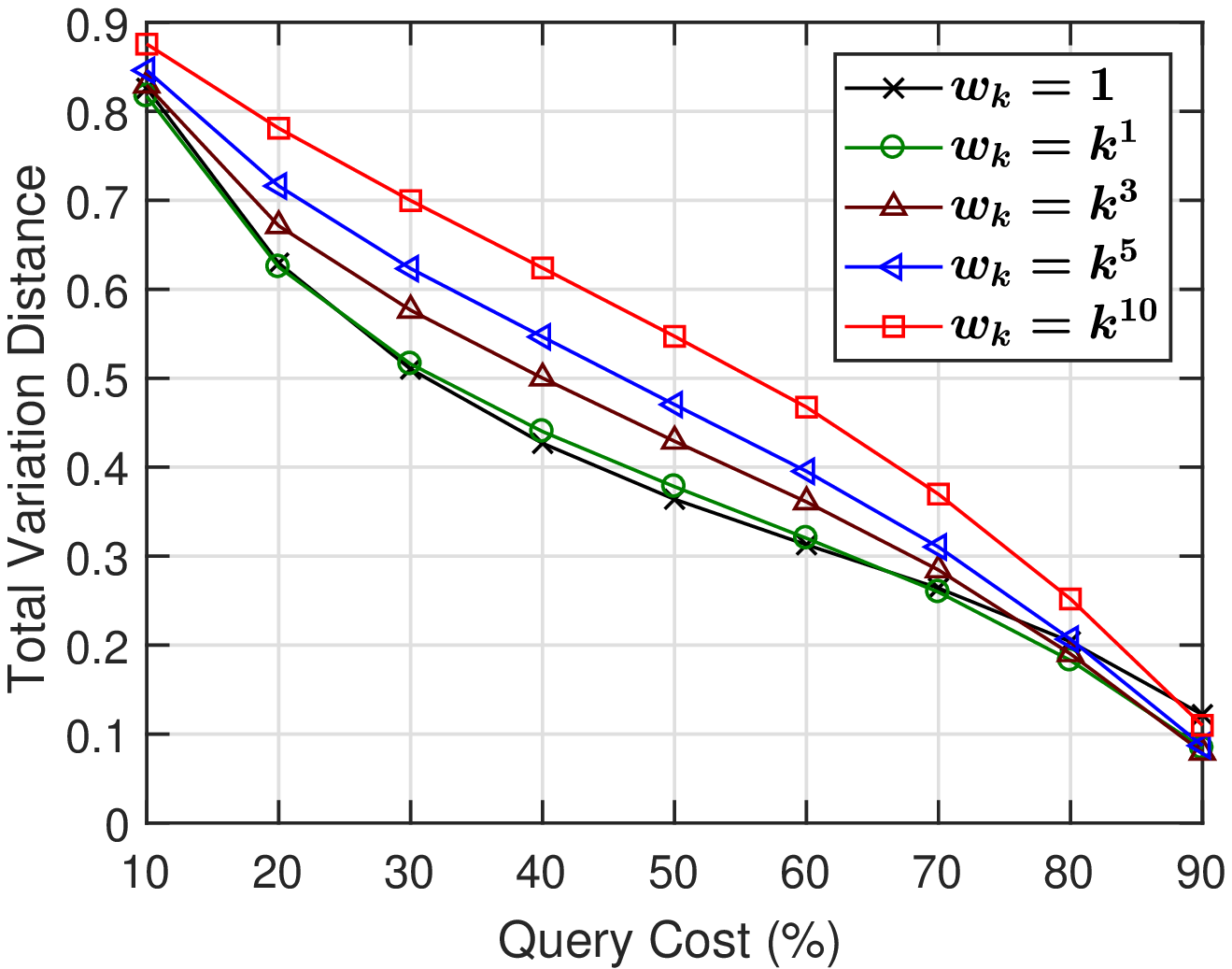}}
    \vspace{-3mm}
    \caption{The TVD results obtained under Gnutella graph as the query cost varies.} \label{fig:gnutella05-api}
    \vspace{-3mm}
\end{figure*}
%-----------------------------------------------------------------------
%---------------------------------------------------------------------
\begin{figure*}[t!]
    \centering
    \vspace{-0mm}
    \subfigure[$\ppi = \uu$; $p = 0.01$]{\includegraphics[width=0.31\textwidth]{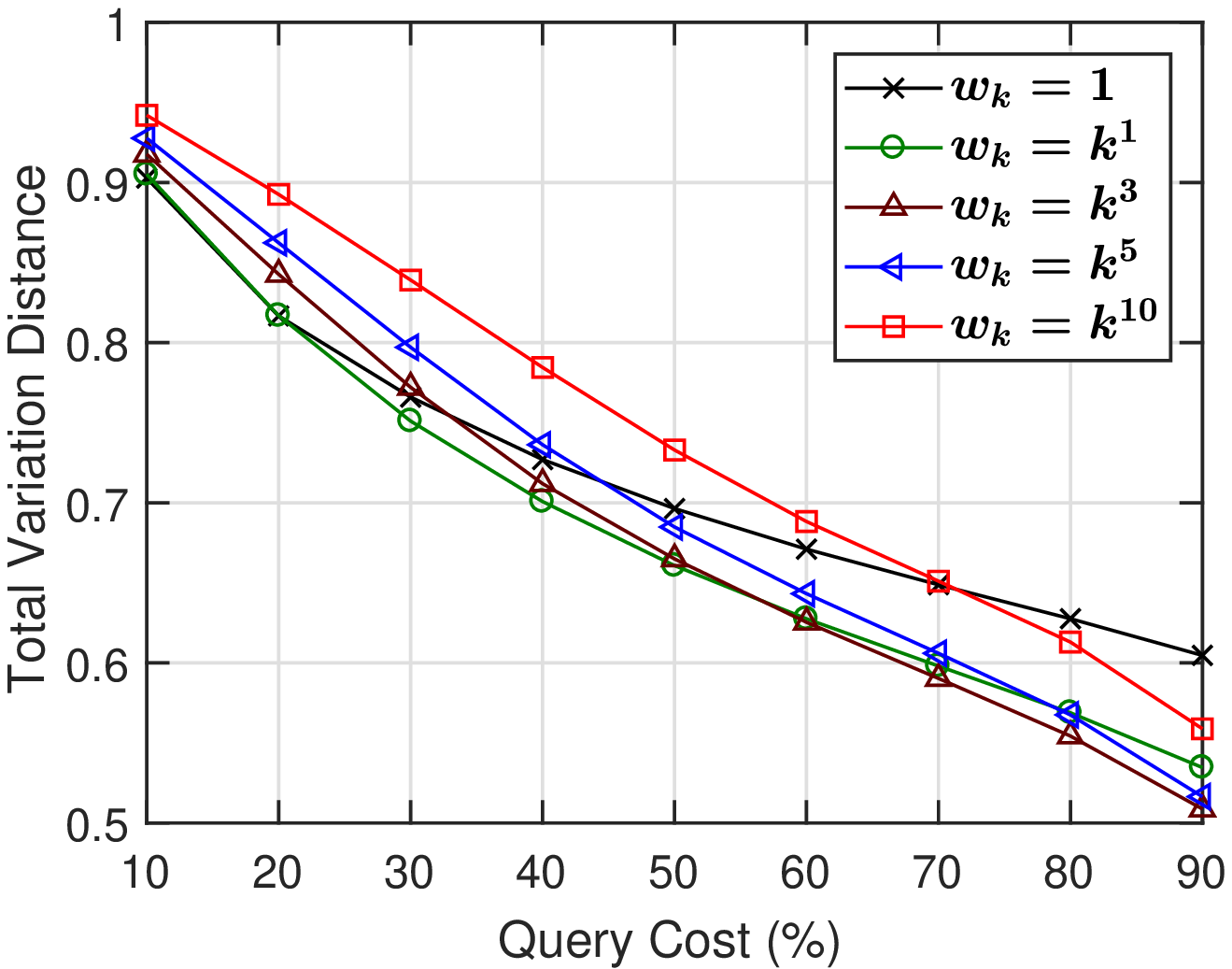}}
    \hspace{1mm}
    \subfigure[$\pi(i) \propto d^-_i$; $p = 0.01$]{\includegraphics[width=0.31\textwidth]{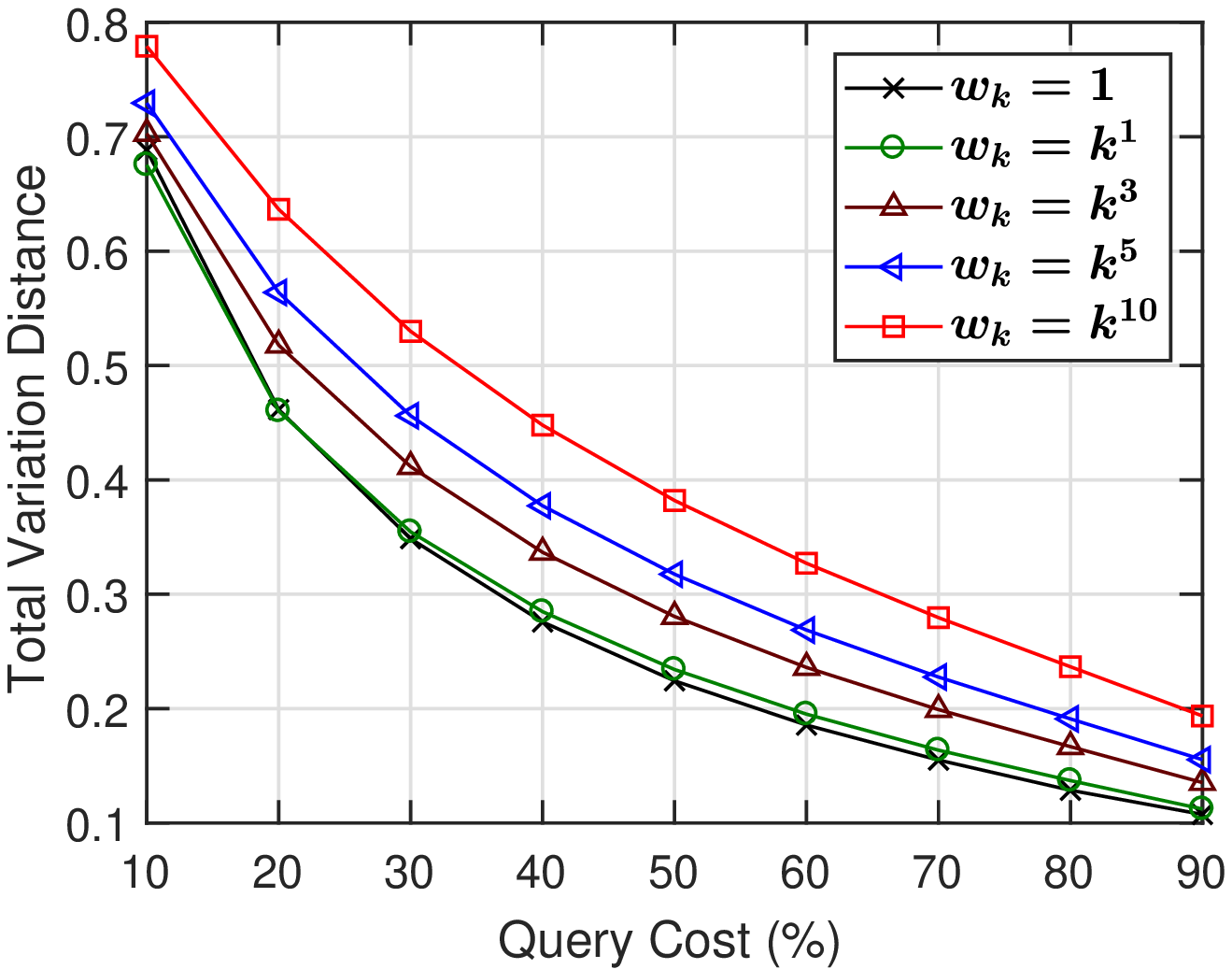}}
    \hspace{1mm}
    \subfigure[EVC $\xx$; $p = 0.01$]{\includegraphics[width=0.31\textwidth]{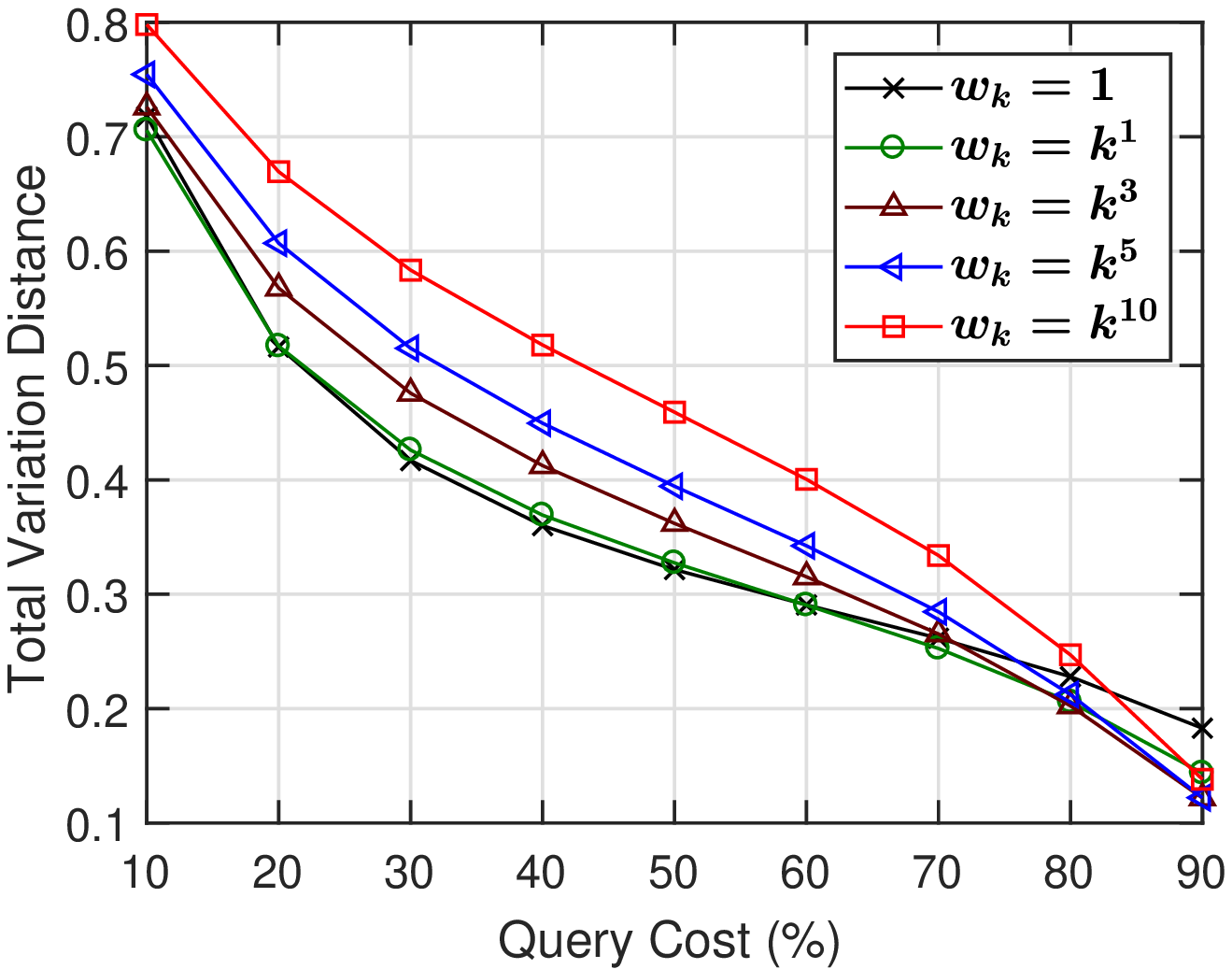}}
    \vspace{-3mm}
    \caption{The TVD results obtained under Slashdot graph with varying degrees of the query cost.} \label{fig:slashdot-api}
    \vspace{-3mm}
\end{figure*}
%-----------------------------------------------------------------------

In Figures~\ref{fig:gnutella05-api} and~\ref{fig:slashdot-api}, we present the TVD results of the dynamic NMMC method with $k^\alpha$ ($\alpha \!=\! 0, 1,3,5,10$) and $p \!=\! 0.01$ under Gnutella and Slashdot graphs, respectively, as the query cost varies. Here, the query cost is defined as the number of unique queries normalized by the graph size (LSCC size) and given in percentage. The same numbers of agents are used as before for Figures~\ref{fig:gnutella05} and~\ref{fig:slashdot}, respectively. We see that the choice of $w_k \!=\! k$ exhibits the overall fastest decaying rate in its TVD curve for all three cases of the target QSD under both Gnutella and Slashdot graphs. This also indicates that higher redistribution weights toward more recently-visited nodes is no longer beneficial in improving the convergence speed when the query cost is concerned.

In a similar vein, we showed the TVD results of our NMMC method until $10^4$ time steps in Figure~\ref{fig:gnutella05}, which is greater than the LSCC size of the Gnutella graph. Considering 100 agents used, one might expect that the entire graph is already visited by that time. However, it has not been the case, since there are a lot of redistributions that are counted toward the number of time steps, but not toward the number of unique queries. The multiple agents can also visit the already-visited nodes many times, which is again not counted toward the number of unique queries. This is confirmed in Figure~\ref{fig:gnutella05-unique}, which shows the number of unique queries, which is normalized by the LSCC size, for all three cases of the target QSD on the Gnutella graph using the dynamic NMMC method with $k^\alpha$ ($\alpha \!=\! 0,1,3,5,10$) and $p \!=\! 0.01$.

%---------------------------------------------------------------------
\begin{figure*}[t!]
    \centering
    \vspace{-0mm}
    \subfigure[$\ppi = \uu$; $p = 0.01$]{\includegraphics[width=0.31\textwidth]{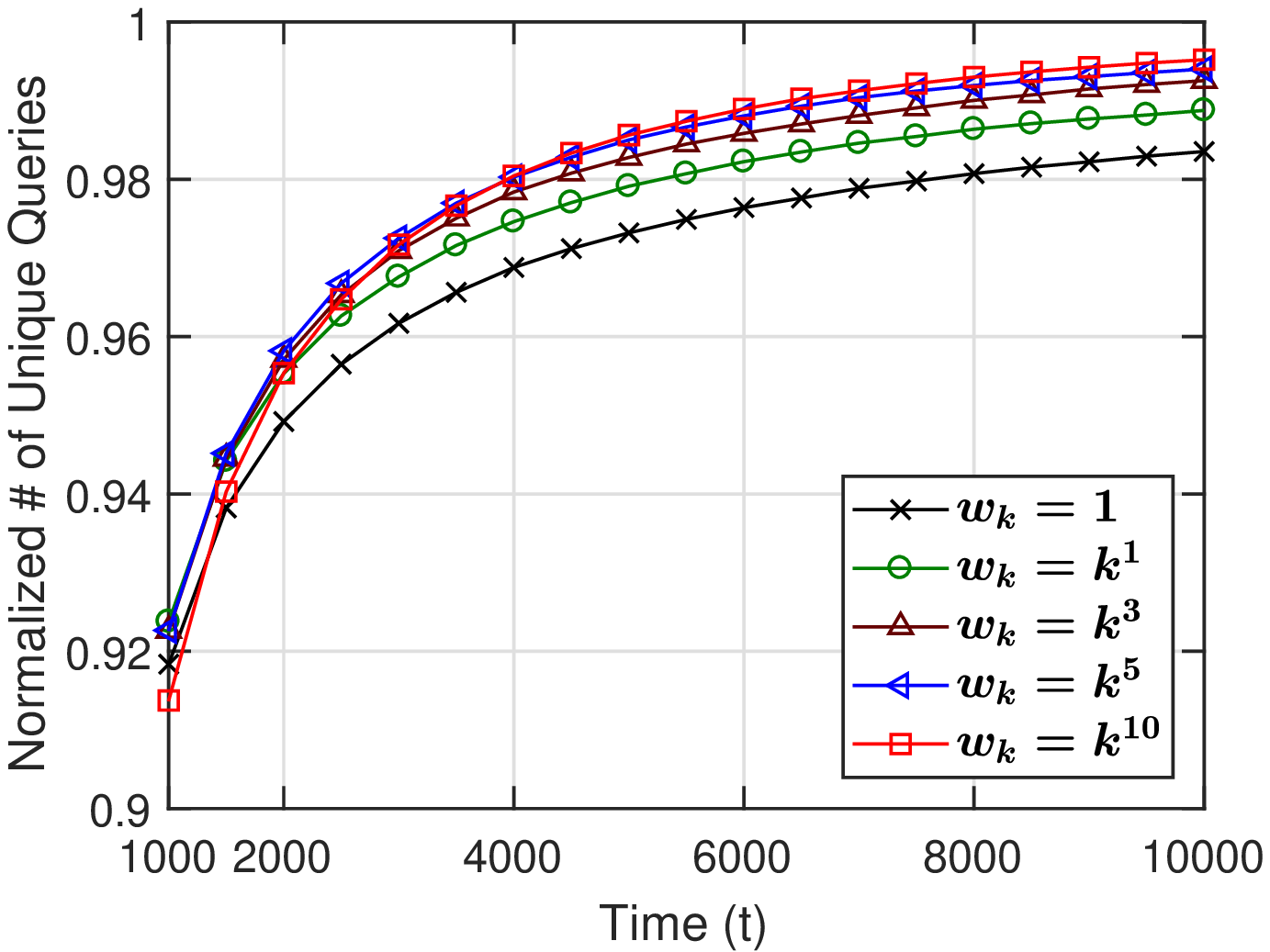}}
    \hspace{1mm}
    \subfigure[$\pi(i) \propto d^-_i$; $p = 0.01$]{\includegraphics[width=0.31\textwidth]{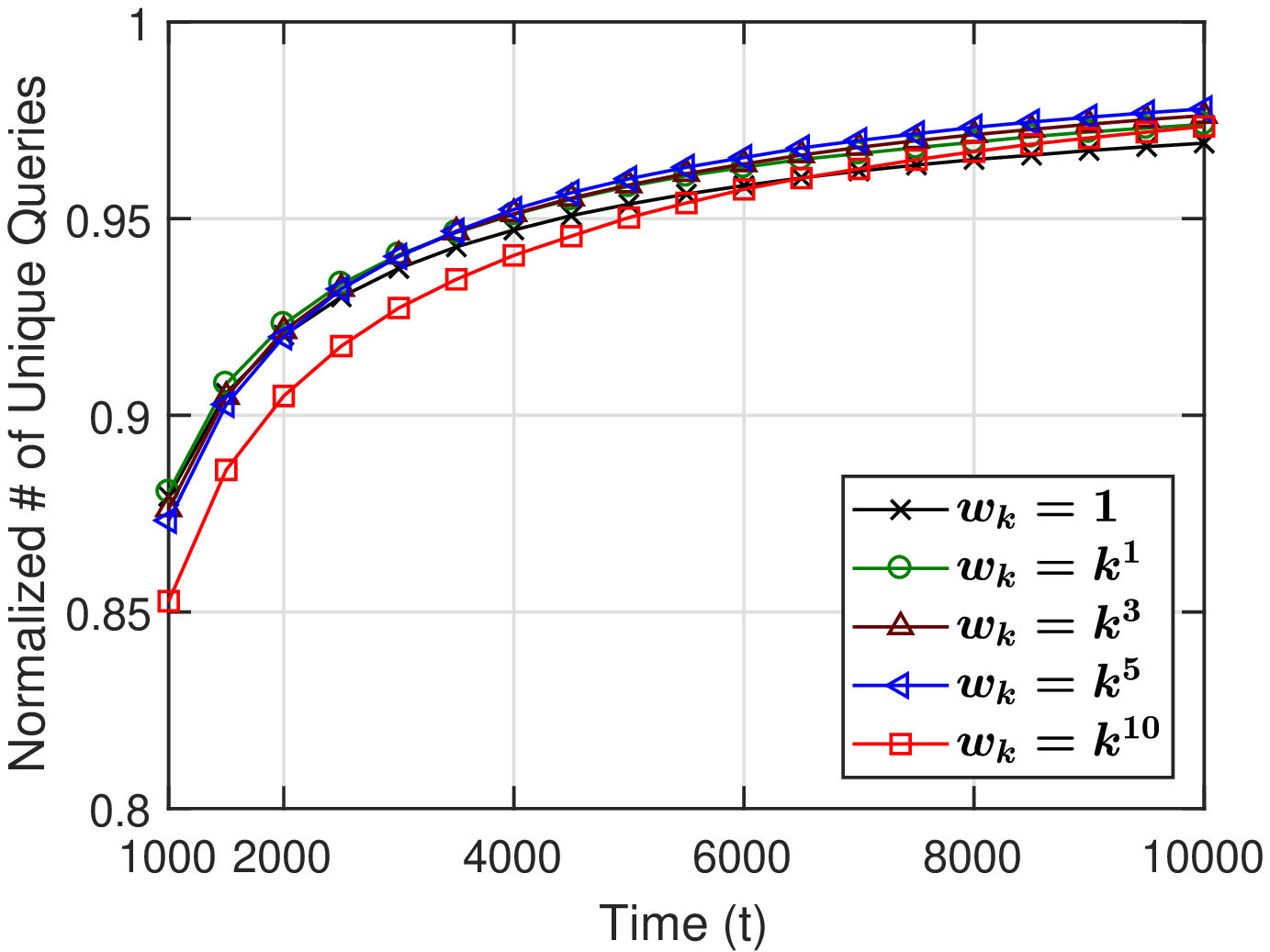}}
    \hspace{1mm}
    \subfigure[EVC $\xx$; $p = 0.01$]{\includegraphics[width=0.31\textwidth]{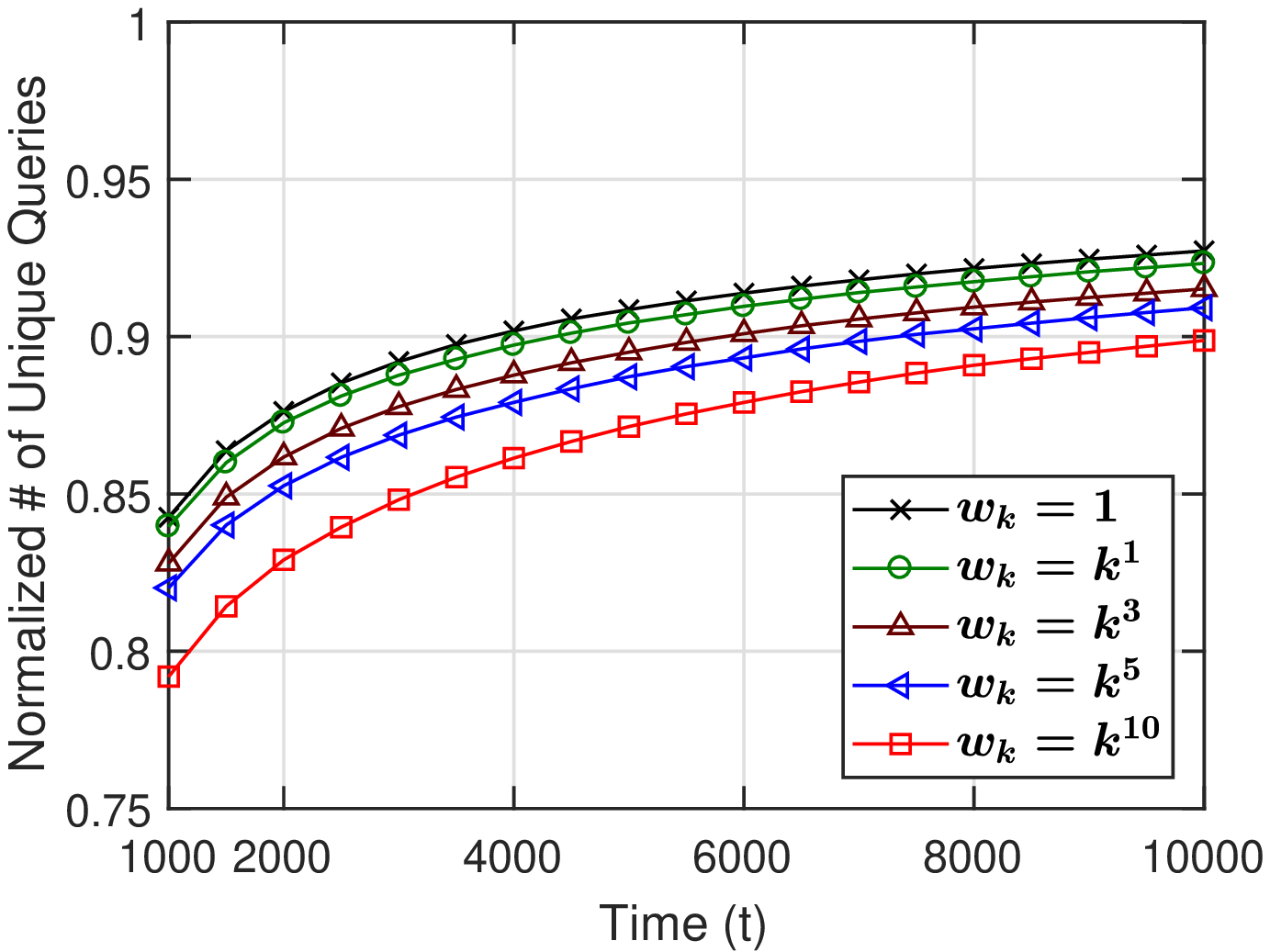}}
    \vspace{-3mm}
    \caption{The normalized number of unique queries used by the dynamic NMMC method on Gnutella graph.} \label{fig:gnutella05-unique}
    \vspace{-3mm}
\end{figure*}
%-----------------------------------------------------------------------

\subsection{Comparison with the Latest Approach for Graph Sampling}\label{subse:comparison}

Lastly, we provide a comparison with the latest approach in~\cite{Bruno-Infocom12} for sampling directed graphs. Among only a few studies available in the literature on sampling from directed graphs, which shall be explained later in Section~\ref{se:related}, the latest practical solution is the one in~\cite{Bruno-Infocom12}, which is called Directed Unbiased Random Walk (DURW) algorithm. While our NMMC method is capable of achieving any distribution $\ppi$ on a directed graph and estimating its EVC $\xx$, which is much more than just achieving `uniform' samples from the directed graph, the comparison of our NMMC method to the DURW algorithm would be interesting in the context of uniform graph sampling and is provided below. We here focus on estimating $\ppi \!=\! \uu$.

The DURW algorithm has two main parts. The first part is to transform the underlying directed graph to an \emph{undirected} graph. At each newly visited node, it keeps outgoing edges only to `not-yet-visited' nodes and converts them as undirected edges. This edge-pruning idea was originally proposed in~\cite{VLDB2000}. Upon the transformed, undirected graph, the second part is to use (i) a weighted random walk with random jumps to \emph{everywhere} in the graph and (ii) the importance sampling. Specifically, the weighted random walk at node $i$ chooses one of its neighbors uniformly at random, with probability $\frac{d_i}{d_i + w}$, and jumps (or teleports) to any random one in the graph, with probability $\frac{w}{d_i + w}$, where $w$ is the weight parameter for choosing a random jump and $d_i$ is the degree of $i$. While the former operation consumes a unit cost if a new node is sampled, the latter consumes the cost $c \!>\! 1$. For a directed social network, assuming that its user-ID space is accessible, the cost $c$ of a random jump is the number of API calls required on average to find a valid ID for sampling. It clearly depends on how sparse the user-ID space may be, which is unknown and uncontrollable. Note that the random jumps may not be feasible at all in practice, e.g., for sampling Web graphs. In addition, letting $\bm{\pi'}$ be the stationary distribution of the weighted random walk with random jumps, one can see that $\pi'(i)$ is proportional to $d_i \!+\! w$. The importance sampling is then applied to correct the bias from the non-uniform $\bm{\pi'}$. Thus, letting $\{X_t\}$ be the trajectory by the DURW algorithm, it is to use the ratio estimator $\hat{x}(i) \!:=\! \frac{\sum_{k=1}^t\idc_{\{X_k = i\}}/\pi'(X_k)}{\sum_{k=1}^t 1/\pi'(X_k)}$ to estimate $\ppi \!=\! \uu$. Note that, instead of $\bm{\pi'}$, its unnormalized version can be used in practice. We refer to~\cite{Bruno-Infocom12} for more details.

%---------------------------------------------------------------------
\begin{figure*}[t!]
    \centering
    \vspace{-0mm}
    \subfigure[TVD; NMMC]{\includegraphics[width=0.31\textwidth]{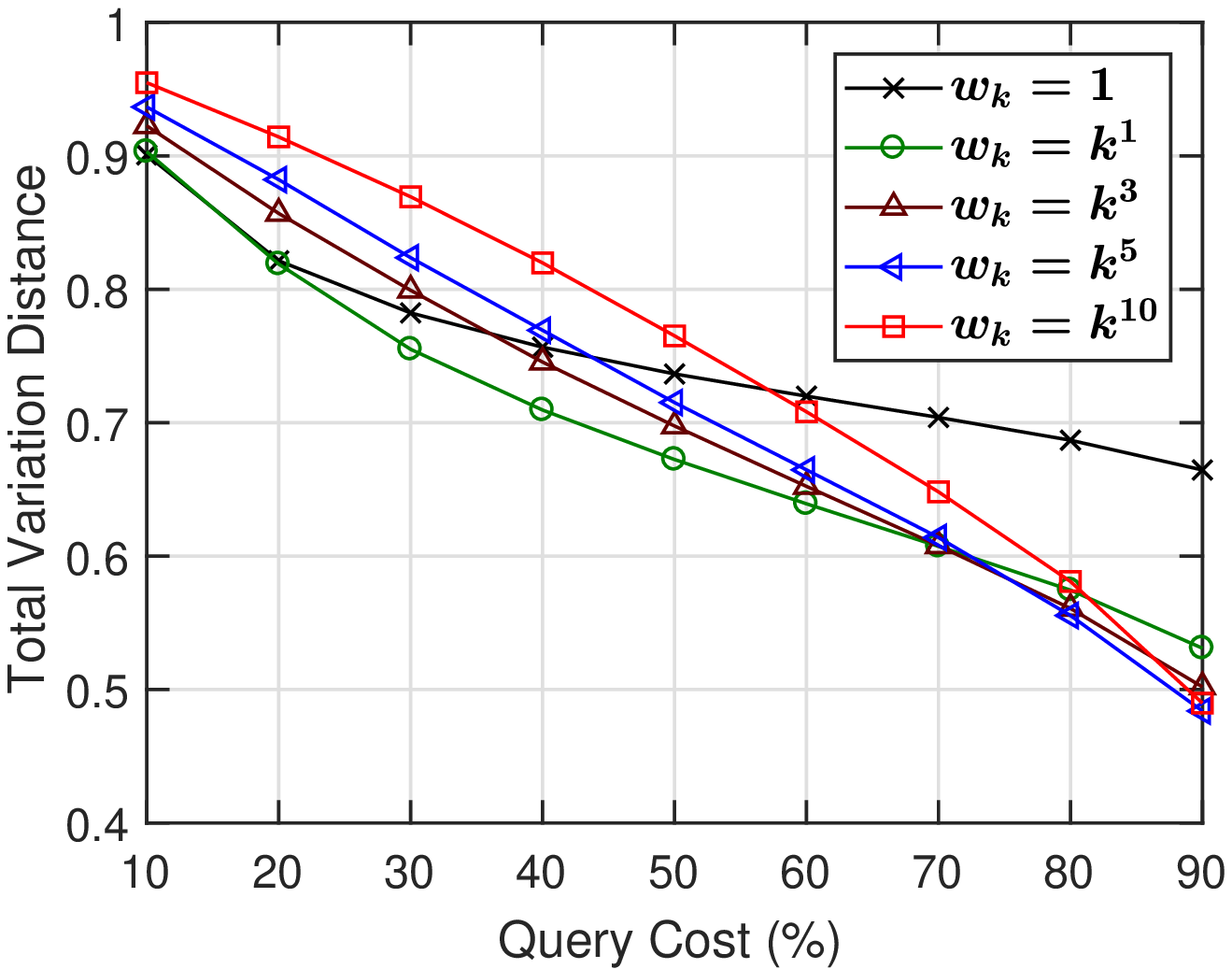}}
    \hspace{6mm}
    \subfigure[TVD; DURW]{\includegraphics[width=0.31\textwidth]{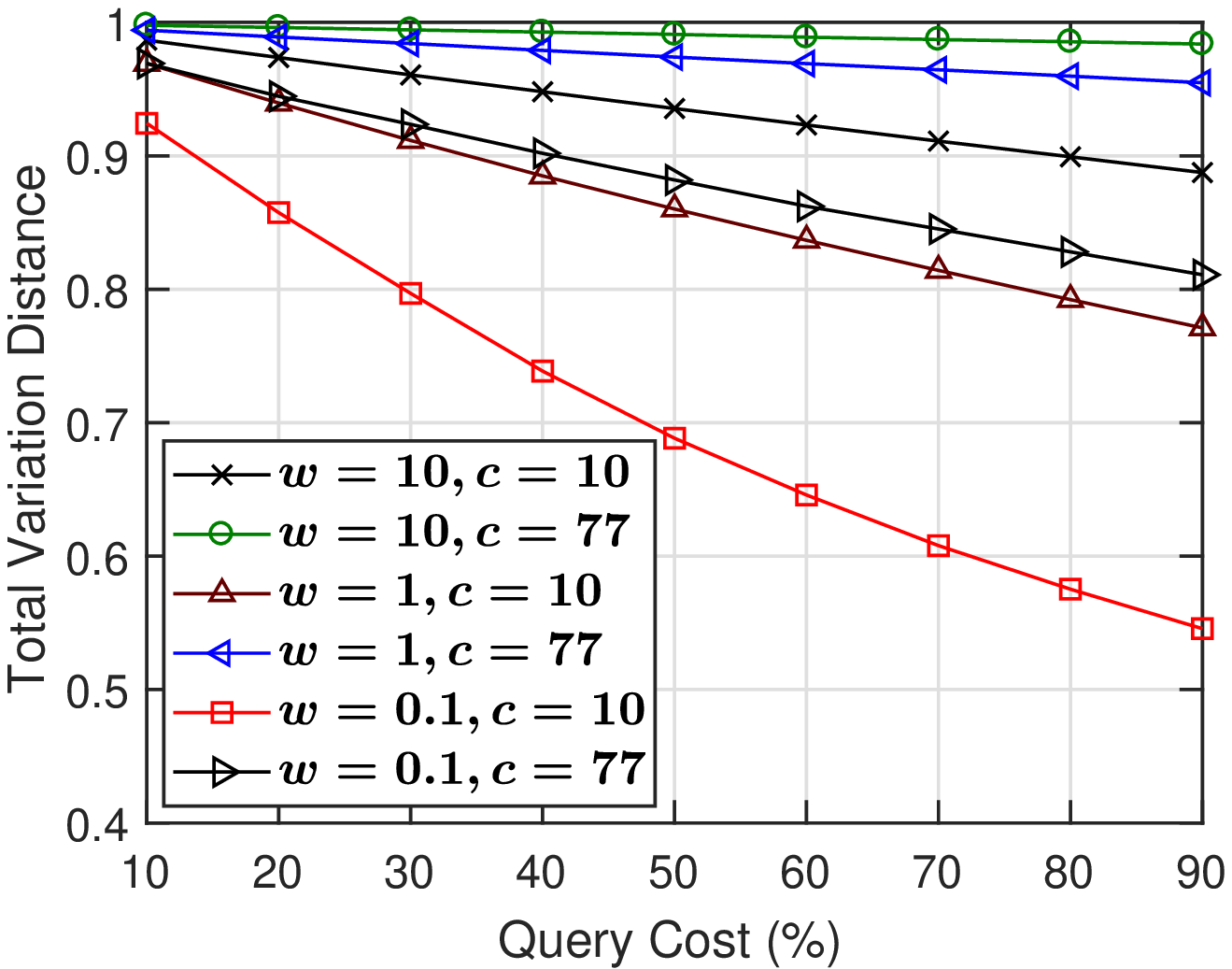}}
    \\\vspace{-1mm}
    \subfigure[NRMSE; NMMC]{\includegraphics[width=0.31\textwidth]{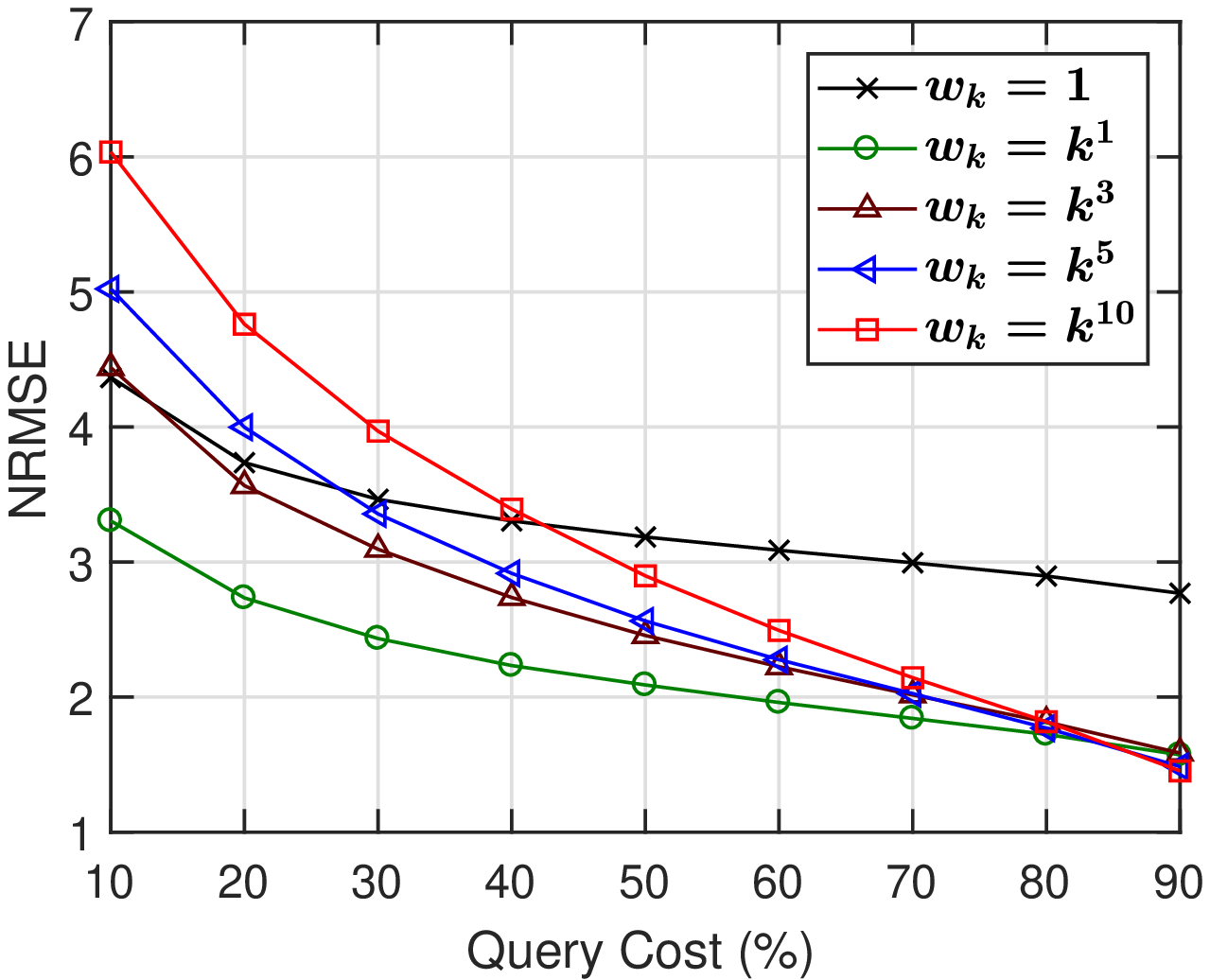}}
    \hspace{6mm}
    \subfigure[NRMSE; DURW]{\includegraphics[width=0.31\textwidth]{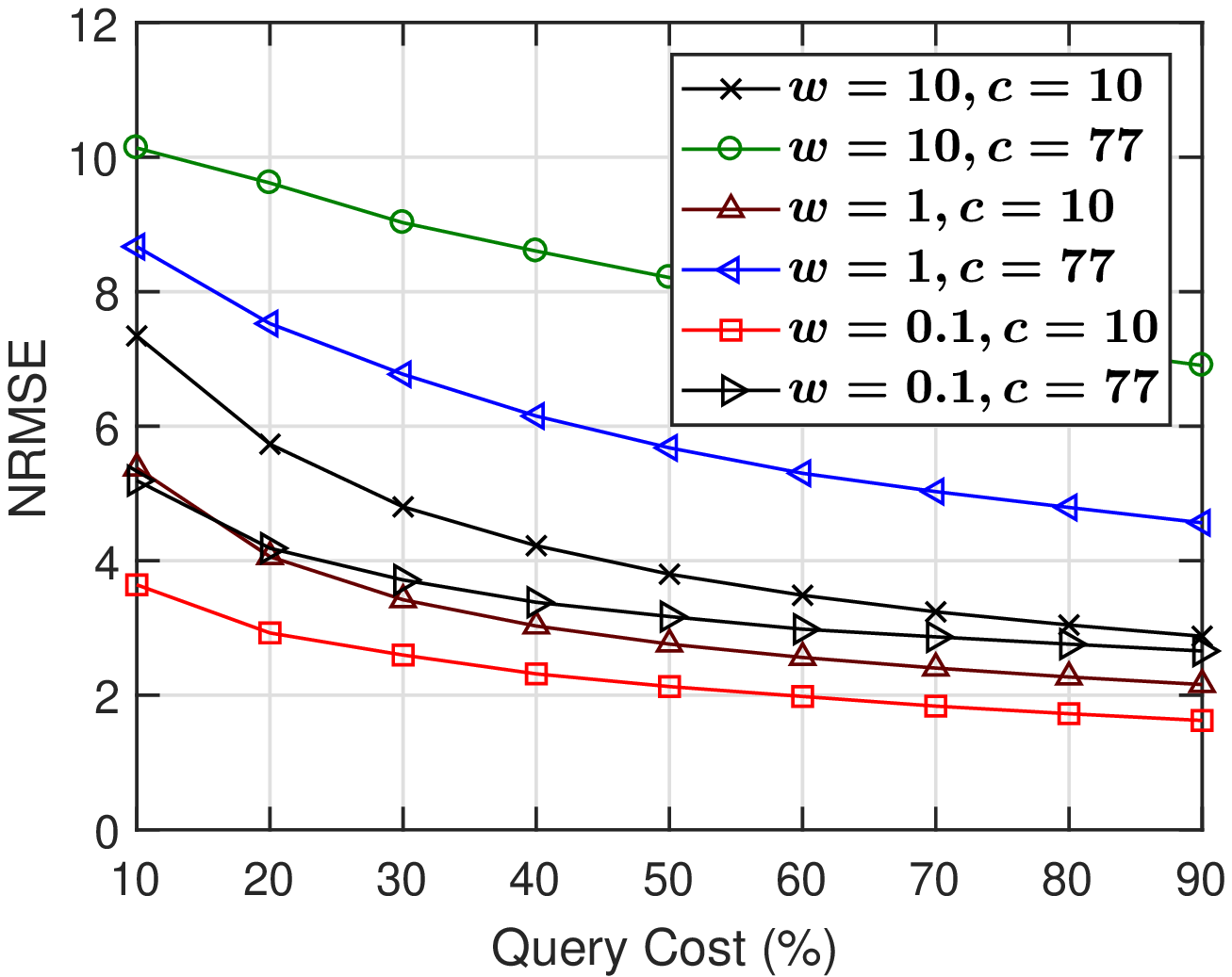}}
    \vspace{-3mm}
    \caption{The TVD and NRMSE results of our dynamic NMMC method and the DURW algorithm under Gnutella graph.}\label{fig:durw}
    \vspace{-4mm}
\end{figure*}
%-----------------------------------------------------------------------

Figure~\ref{fig:durw} shows the TVD and NRMSE results of our NMMC method with $w_k \!=\! k^\alpha$ ($\alpha \!=\! 0, 1,3,5,10$) and $p \!=\! 0.01$, and the DURW algorithm with various choices of $w$ and $c$ under Gnutella graph, which are from~\cite{Bruno-Infocom12}. For example, the choice of $w \!=\! 0.1$ behaves more like the pure simple random walk, while the larger values of $w$ put more weights on random jumps. In addition, $c \!=\! 10$ means 10 API calls spent on average to sample a valid ID (or to teleport to a random node). Similarly for $c \!=\! 77$. In addition, we here consider a single random crawler. It is because in the DURW algorithm, each crawler has to build its own state space (or the transformed, undirected graph) as it explores the graph, which is clearly time-varying and depends on its sample path, hence the state space can never be shared with other crawlers. The query cost is again defined as the number of unique queries normalized by the graph size (LSCC size) and given in percentage. We report the average of the NRMSE over all $i \!\in\! \N$.

We see that our NMMC method generally performs better than the DURW algorithm. A possible explanation would be that the way to transform the underlying directed graph to an undirected graph makes the transformed one `tree-like'. Thus, traversing the transformed graph could get slower. While random jumps can speed up the graph traversal, the high cost of each random jump eventually outweighs the benefit of moving to everywhere in the graph. We also observe that carefully choosing the value of $w$ with given (yet unknown) cost $c$, the DURW algorithm would be comparable to our NMMC method. In other words, the DURW algorithm requires a good choice of $w$ a priori against unknown $c$, before running the random crawler to explore a directed graph by `undirectifying' directed edges on the fly. In contrast, our NMMC method performs well for a wide range of values of $\alpha$ for $w_k \!=\! k^\alpha$.

We finally emphasize that while we take (directed) graph sampling as a primary application of our NMMC method in this paper, the NMMC method can potentially find much broader scope of applications beyond the graph sampling as we have witnessed the widespread applications of the MH algorithm in many disciplines. The potential broader impact stems from the versatile applicability of the NMMC method for generating \emph{any} arbitrary probability distribution over the state spaces violating the reciprocity condition, where the MH algorithm and other standard MCMC methods are no longer applicable.

\section{Related Work}\label{se:related}

There have been a few studies on achieving uniform samples from \emph{directed} graphs, ranging from traditional Web graphs~\cite{VLDB2000,WWW2000,AAAI2001} to recent online social networks~\cite{Wang-Sigcomm10,Bruno-Infocom12,Gabielkov-Sigcomm14}. The key underlying methods have been mainly (explicitly or implicitly) the MH algorithm and importance sampling for Markov chains as in (\ref{importance}). The `non-uniform' random walks, along with the importance sampling, are typically the simple random walks and (personalized) PageRank walks~\cite{WWW2000,AAAI2001}. Unfortunately, however, their stationary distributions are simply \emph{unknown}, and the only viable approach in~\cite{WWW2000,AAAI2001} has been to empirically ``estimate" the stationary distribution (e.g., based on the visit counts obtained by running a separate random walk), which is still known to suffer from the bias toward the nodes with high in-degree~\cite{Bar-Yossef08}. Another simple way to get around the difficulty is to just convert every directed edge to an undirected one (excluding the duplicate ones)~\cite{AAAI2001,Wang-Sigcomm10,Gabielkov-Sigcomm14}, which technically has noting to do with sampling from directed graphs.

A more practical method was proposed in~\cite{VLDB2000} to construct an undirected graph on-the-fly from a target directed graph for sampling, only considering the inbound links for each node discovered before the node is visited for the first time, and then to apply, in essence, the MH algorithm with $Q_{ij} \!=\! 1/\max_{k}d^+_k$ over the transformed \emph{undirected} graph to obtain uniform samples. In short, the edge conversion is done with edge pruning. This idea has been adopted later in~\cite{Bruno-Infocom12} for a weighted random walk with `random jumps' (to anywhere in the undirected graph), equipped with the importance sampling due to its non-uniform stationary distribution (on the undirected graph). In contrast to all these, \emph{without} knowing the stationary distributions or requiring any transformation of the directed graph to another undirected version, our NMMC method is able to sample from any target distribution over the directed graph with theoretical guarantees. We recently came to know the work by Avrachenkov et al.~\cite{Avrachenkov2018} in which they briefly mention that their proposed reinforcement learning-based sampling technique can potentially be applied to strongly connected directed graphs, but without any theoretical and practical demonstrations. We here emphasize that our NMMC method is even capable of sampling from a given distribution $\bm{\pi}$ over the set of reachable nodes $\N_S$ of the directed graph, even if the graph is not strongly connected, not to mention the capability of estimating the \emph{EVC} $\xx$ of the (strongly connected) directed graph.

\section{Conclusion}
\label{se:conclu}

In this paper, we have shown how to go beyond the limitation set by the current MCMC methods when state spaces violate the reciprocity condition and the global knowledge is unavailable. Unlike the most studies in the literature that aim to improve a given MCMC algorithm or to propose a better one within the same framework, our approach is fundamentally different and unique, in that it works directly over any nonreversible proposed chain such as simple random walks on directed graphs without sacrificing their distributed nature in the implementation. Our NMMC framework builds upon our careful mapping from the proposed chain to the target distribution on a set of transient states, and entails the machinery of the quasi-stationary distribution of a suitably constructed transient chain and the induced random walks with reinforcement to relocate to positions from their past history, which are amenable to distributed implementation using only locally available information. We have also devised a number of ways to improve the overall speed of convergence and to enhance the practicability, by utilizing online estimates of in-degree and the required global constant and by relaxing the assumption of the strong connectedness of the underlying directed graph, to make our NMMC method readily applicable to most directed graphs in reality.

\section{Acknowledgments}
The authors thank the anonymous reviewers and especially our shepherd, Konstantin Avrachenkov, for many constructive comments and suggestions that greatly improved the quality of this paper. Do Young Eun is supported in part by National Science Foundation under Grant Nos. CNS-1423151 and CNS-1824518.

%
% The next two lines define the bibliography style to be used, and the bibliography file.
\bibliographystyle{ACM-Reference-Format}
\bibliography{ref}

\end{document}